\newenvironment{eqenumerate}
{\begin{enumerate}[ref=\thesection.\theenumi]
		
		\setcounter{enumi}{\value{equation}}}
	{\setcounter{equation}{\value{enumi}}
\end{enumerate}}
\NewDocumentCommand\eqitem{ o }{
	\setcounter{enumi}{\value{equation}}
	\IfValueTF{#1}
	{\item[#1]}
	{\item}
	\setcounter{equation}{\value{enumi}}
}
\newtheorem{theorem}{Theorem}[section]
\newtheorem{corollary}[theorem]{Corollary}
\newtheorem{lemma}[theorem]{Lemma}
\newtheorem{proposition}[theorem]{Proposition}
\numberwithin{equation}{section}
\newtheorem{quotetheorem}[theorem]{Theorem}
\theoremstyle{remark}
\newtheorem{remark}{Remark}[section]
\newtheorem{example}[theorem]{Example}
\renewcommand*\env@matrix[1][\arraystretch]{%
	\edef\arraystretch{#1}%
	\hskip -\arraycolsep
	\let\@ifnextchar\new@ifnextchar
	\array{*\c@MaxMatrixCols c}}
\renewcommand{\S}{\mathbb{S}}
\DeclareMathOperator{\supp}{supp}
\newcommand{\Laplacian}{\Delta}
\DeclareMathAlphabet{\mathbbold}{U}{bbold}{m}{n}
\DeclareMathOperator*{\esssup}{ess\,sup}
\newcommand{\dimN}{N}
\newcommand{\R}{\mathbb{R}}
\newcommand{\N}{\mathbb{N}}
\newcommand{\C}{\mathbb{C}}
\newcommand{\D}{D}
\newcommand{\positiveR}{\R_{>0}}
\newcommand{\Fourier}{\mathcal{F}}
\newcommand*{\variabledot}{\makebox[1ex]{\textbf{$\cdot$}}}
\newcommand*\bigcdot{\mathpalette\bigcdot@{.5}}
\newcommand*\bigcdot@[2]{\mathbin{\vcenter{\hbox{\scalebox{#2}{$\m@th#1\bullet$}}}}}
\newcommand\restr[2]{{
		\left.\kern-\mulldelimiterspace 
		#1 
		\vphantom{\big|} 
		\right|_{#2} 
}}
\newcommand{\sumk}{\sum_{k=0}^{\infty}}
\newcommand{\sumn}{\sum_{n}}
\newcommand{\Fwd}[1]{\Fourier_w^{(#1)}}
\newcommand{\Fwthetad}[2]{\Fourier_{w_{#1}}^{(#2)}}
\newcommand{\abs}[1]{\lvert#1\rvert} 
\newcommand{\Abs}[1]{\mleft \lvert #1 \mright \rvert}
\newcommand{\norm}[1]{\lVert#1\rVert} 
\newcommand{\set}[2]{\{ \, #1 : #2 \, \} } 
\newcommand{\measure}[1]{ \lvert#1\rvert }
\newcommand{\innerproduct}[2]{\langle #1, #2 \rangle}
\newcommand{\SchrodingerS}{S}
\newcommand{\DiracS}{\widetilde{S}}
\newcommand{\phim}[1]{\phi_{#1}}
\newcommand{\Diracphim}[1]{A_{#1}}
\newcommand{\Diracopm}[1]{ H_{#1} }
\newcommand{\const}{\bm{C}}
\newcommand{\Sconstd}[1]{\const_{\textup{S}}^{(#1)}}
\newcommand{\Sconstwpd}[3]{\const_{\textup{S}}^{(#3)}(#1, #2)}
\newcommand{\RSconstwpmd}[4]{\const^{(#4)}_{\textup{RS}, #3}(#1, #2)}
\newcommand{\Diracconstmd}[2]{\const_{\textup{D}, #1}^{(#2)}}
\newcommand{\Diracconstwpmd}[4]{\const_{\textup{D}, #3}^{(#4)}(#1, #2)}
\newcommand{\HPk}[1]{ \mathcal{H}_{#1} } 
\newcommand{\Pkd}[2]{P_{#1}^{(#2)}} 
\newcommand{\adjoint}[1]{{#1}^*}
\newcommand{\gammaj}[1]{\gamma_{#1}}
\newcommand{\gammajd}[2]{\gamma_{#1}^{(#2)}}
\newcommand{\sigmaj}[1]{\sigma_{#1}}
\newcommand{\matrixIN}[1]{I_{#1}}
\newcommand{\Lambdak}[1]{\Lambda_{#1}}
\newcommand{\DiracLambdak}[1]{\widetilde{\Lambda}_{#1}}
\newcommand{\ekn}[2]{e_{#1}^{#2}}
\newcommand{\pkn}[2]{p_{#1}^{#2}}
\newcommand{\fkn}[2]{f_{#1}^{#2}}
\newcommand{\lambdak}[1]{\lambda_{#1}}
\newcommand{\lambdakd}[2]{\lambda_{#1}}
\newcommand{\lambdasupd}[1]{\lambdakd{*}{#1}}
\newcommand{\Diraclambda}{\widetilde{\lambda}}
\newcommand{\Diraclambdak}[1]{\Diraclambda_{#1}}
\newcommand{\Diraclambdasup}{\Diraclambdak{*}}
\newcommand{\Wk}[1]{W_{#1}}
\newcommand{\BesselI}[1]{I_{#1}}
\newcommand{\BesselJ}[1]{J_{#1}}
\newcommand{\BesselK}[1]{K_{#1}}
\newcommand{\EllipticE}{E}
\newcommand{\fd}[1]{f_{#1}}
\newcommand{\gd}[1]{g}
\DeclareFontFamily{U}{matha}{\hyphenchar\font45}
\DeclareFontShape{U}{matha}{m}{n}{
	<5> <6> <7> <8> <9> <10> gen * matha
	<10.95> matha10 <12> <14.4> <17.28> <20.74> <24.88> matha12
}{}
\DeclareSymbolFont{matha}{U}{matha}{m}{n}
\DeclareFontFamily{U}{mathx}{\hyphenchar\font45}
\DeclareFontShape{U}{mathx}{m}{n}{
	<5> <6> <7> <8> <9> <10>
	<10.95> <12> <14.4> <17.28> <20.74> <24.88>
	mathx10
}{}
\DeclareSymbolFont{mathx}{U}{mathx}{m}{n}
\DeclareMathSymbol{\obot}         {2}{matha}{"6B}
\DeclareMathSymbol{\bigobot}       {1}{mathx}{"CB}
\newcommand{\Vk}[1]{V_{#1}}
\newcommand{\Vkm}[1]{V_{#1, \mu}}
\newcommand{\Vkmm}[1]{V_{#1, -\mu}}
\newcommand{\Qmn}{Q_{\nu}}
\newcommand{\Pn}{P_{\nu}}
\newcommand{\eknm}[2]{e_{#1, \mu}^{#2}}
\newcommand{\Eknm}[2]{E_{#1, \mu}^{#2}}
\newcommand{\fknm}[2]{f_{#1, \mu}^{#2}}
\providecommand*{\doi}[1]{doi:\href{https://doi.org/#1}{#1}}
\renewcommand*{\MR}[1]{\href{https://mathscinet.ams.org/mathscinet-getitem?mr=#1}{#1}}
\title[Optimal constants of smoothing estimates for the Dirac equation]{Optimal constants of smoothing estimates for the Dirac equation in arbitrary dimensions}
\date{}
\author{Soichiro Suzuki}
\address[Soichiro Suzuki]{Department of Mathematics, Chuo University, 1-13-27, Kasuga, Bunkyo-ku, Tokyo, 112-8551, Japan}
\email{soichiro.suzuki.m18020a@gmail.com}
\thanks{The author was supported by Japan Society for the Promotion of Science (JSPS) KAKENHI Grant Number JP23KJ1939.}
\subjclass[2020]{33C55, 35B65, 35Q41, 42B10}
\keywords{Dirac equations, smoothing estimates, optimal constants, spherical harmonics.}
\begin{document}
	\begin{abstract}
		We give optimal constants of smoothing estimates for the $d$-dimensional free Dirac equation for any $d \geq 2$.
		Our main abstract theorem shows that the optimal constant $C$ of smoothing estimate associated with a spatial weight $w$ and smoothing function $\psi$
		is given by $(2\pi)^{d-1} C = \sup_{k \in \N} \sup_{r > 0} \Diraclambdak{k}(r)$, where $\{ \Diraclambdak{k} \}$ is a certain sequence of functions defined via integral formulae involving $(w, \psi)$. 
		This is an analogue of a similar result for Schr\"{o}dinger equations given by \citeauthor*{BSS2015} (\citeyear{BSS2015}), and also extends previous results of \citeauthor*{Iko2022} (\citeyear{Iko2022}) and \citeauthor*{IkS2024_3D} (\citeyear{IkS2024_3D}) for $d=2, 3$ to any dimensions $d \geq 2$.
		In order to prove this, we establish a modified version of the spherical harmonics decomposition of $L^2(\S^{d-1})$, which suits well with the Dirac operator and allows us to find optimal constants.
		Furthermore, using our abstract theorem, we give explicit values of optimal constants associated with typical examples of $(w, \psi)$.
		As it turns out, optimal constants for Dirac equations can be written explicitly in many cases, even in the cases that it is impossible for Schr\"{o}dinger equations.
	\end{abstract}
	\maketitle
	\section{Introduction}
	The Kato--Yajima smoothing estimates are one of the fundamental results in the study of dispersive equations such as Schr\"odinger equations and Dirac equations, which were firstly observed by \citet{KY1989}, and have been studied by numerous researchers. 
	At first, we consider the Schr\"odinger equation:
	\begin{equation} \label{eq:Schrodinger}
		\begin{cases}
			i \partial_t u(x, t) = - \Laplacian u(x, t) , & (x, t) \in \R^d \times \R , \\
			u(x, 0) = u_0(x) , & x \in \R^d , 
		\end{cases}
	\end{equation}
	The space-time global smoothing estimate for the Schr\"odinger equation is expressed as
	\begin{equation} \label{eq:smoothing Schrodinger}
		\int_{t \in \R}\int_{x \in \R^d} w(\abs{x}) \abs{\psi(\abs{\D}) e^{it\Laplacian} u_0(x)}^2 \, dx \, dt \leq C\norm{u_0}^2_{L^2(\R^d)} , 
	\end{equation}
	or equivalently, 
	\begin{equation} 
		\int_{t \in \R}\int_{x \in \R^d} w(\abs{x}) \Abs{ \int_{\xi \in \R^d} e^{ix \cdot \xi} \psi(\abs{\xi}) e^{ - i t \abs{\xi}^2 }  \widehat{u_0}(\xi) \, d\xi }^2 \, dx \, dt \leq (2\pi)^{d} C \norm{\widehat{u_0}}^2_{L^2(\R^d)} ,
	\end{equation}
	where $w, \psi \colon (0, \infty) \to [0, \infty)$ are some given functions, which are called a spatial weight and smoothing function, respectively.
	Since we are interested in optimal constants, here we clarify that the Fourier transform in this paper is defined by  
	\begin{equation}
		\mathcal{F}f(\xi) = \widehat{f}(\xi) \coloneqq \int_{x \in \R^d}f(x)e^{-ix\cdot\xi} \, dx ,
	\end{equation}
	and so that we have
	\begin{equation}
		\norm{\widehat{f}}_{L^2(\R^d)}^2 = (2 \pi)^d \norm{f}_{L^2(\R^d)}^2 .
	\end{equation}
	We write $\Sconstd{d} = \Sconstwpd{w}{\psi}{d}$ for the optimal constant for the inequality \eqref{eq:smoothing Schrodinger}, in other words,
	\begin{equation} \label{eq:optimal Schrodinger}
		\Sconstwpd{w}{\psi}{d} \coloneqq \sup_{\substack{ u_0 \in L^2(\R^d) \\ 
				\norm{u_0} = 1}} 
		\int_{t \in \R}\int_{x \in \R^d} w(\abs{x}) \abs{\psi(\abs{\D}) e^{ it \Laplacian} u_0(x)}^2 \, dx \, dt. 
	\end{equation}
	In addition, a function $u_0 \in L^2(\R^d)$ is called an extremiser if the equality
	\begin{equation} \label{eq:extremiser Schrodinger}
		\int_{t \in \R}\int_{x \in \R^d} w(\abs{x}) \abs{\psi(\abs{\D}) e^{-it\phi(\abs{\D})} u_0(x)}^2 \, dx \, dt = \Sconstd{d} \norm{u_0}^2_{L^2(\R^d)} 
	\end{equation}
	holds.
	In \citeyear{BSS2015}, \citet*{BSS2015} proved the following abstract result Theorem \ref{thm:Schrodinger} by using the so-called spherical harmonics decomposition (Theorem \ref{thm:spherical harmonics decomposition}) and the Funk--Hecke theorem (Theorem \ref{thm:Funk-Hecke}):
	\begin{quotetheorem}[{\cite[Theorem 1.1]{BSS2015}}] \label{thm:Schrodinger}
		Let $d \geq 2$.
		Then we have 
		\begin{equation}
			(2\pi)^{d-1} \Sconstwpd{w}{\psi}{d} = \lambdasupd{d} \coloneqq \sup_{k\in\N} \esssup_{r>0} \lambdakd{k}{d}(r), 
		\end{equation}
		where
		\begin{align}
	\lambdakd{k}{d}(r)
	&=
	\frac{1}{2} (2 \pi)^d ( \psi(r) )^2 \int_0^\infty t w(t) ( \BesselJ{k + d/2 - 1}(r t) )^2 \, dt 
	\label{eq:lambdak 1} \\ 
	&= \frac{ \pi^{(d-1)/2} }{ \Gamma( (d-1)/2 ) } r^{d-2} ( \psi(r) )^2 \int_{-1}^{1} \Fwd{d}( r \sqrt{ 2(1-t) } ) \Pkd{k}{d}(t) (1-t^2)^{(d-3)/2} \, dt . 			\label{eq:lambdak 2} \noeqref{eq:lambdak 2}
\end{align}
		Here $\BesselJ{\nu}$ is the Bessel function of order $\nu$, $\Fwd{d}$ is the Fourier transform of $w$ as a radial function on $\R^d$:
		\begin{align}
			\Fwd{d}(\abs{\xi}) 
			&= \int_{x \in \R^d} w( \abs{x} ) e^{- i x \cdot \xi} \, dx \\
			&= (2 \pi )^{d/2} \abs{\xi}^{-d/2 + 1} \int_0^\infty w(t) t^{d/2} \BesselJ{d/2 - 1}( \abs{\xi} t) \, dt  , 
		\end{align}	
		and $\Pkd{k}{d}$ is the Legendre polynomial of degree $k$ in $d$ dimensions, which may be defined in a number of ways, for example, the following recurrence relation
		\begin{equation}
			\begin{cases}
				\Pkd{0}{d}(t) = 1 , \\
				\Pkd{1}{d}(t) = t , \\
				(k+d-3) \Pkd{k}{d}(t) = (2k+d-4) t \Pkd{k-1}{d}(t) - (k-1) \Pkd{k-2}{d}(t) .
			\end{cases}
		\end{equation}
		Furthermore, non-zero extremisers exist if and only if there exists $k \in \N$ such that the Lebesgue measure of
		\begin{equation}
			\set{ r > 0 }{ \lambdakd{k}{d}(r) = \lambdasupd{d} }
		\end{equation}
		is non-zero.
	\end{quotetheorem}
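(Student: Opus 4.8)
The plan is to diagonalise the positive quadratic form defining $\Sconstwpd{w}{\psi}{d}$ in \eqref{eq:optimal Schrodinger}, first in the radial (energy) variable and then in the angular variable, so that it becomes an orthogonal direct sum of multiplication operators whose symbols are exactly the functions $\lambdakd{k}{d}$. For the first reduction, I insert the Fourier inversion formula for $u_0$ and expand the square; the integral over $x$ against $w(\abs{x})$ produces a factor $\Fwd{d}(\abs{\xi-\eta})$, and the integral over $t\in\R$ gives $\int_{\R} e^{-it(\abs{\xi}^2-\abs{\eta}^2)}\,dt = 2\pi\,\delta(\abs{\xi}^2-\abs{\eta}^2)$, which forces $\abs{\xi} = \abs{\eta}$. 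Passing to polar coordinates $\xi = r\omega$, $\eta = r\omega'$ (so that the delta collapses the two radial integrations into one), the left-hand side of \eqref{eq:smoothing Schrodinger} becomes
\[
	\tfrac12 (2\pi)^{1-2d} \int_0^\infty r^{2d-3} (\psi(r))^2 \iint_{\S^{d-1}\times\S^{d-1}} \Fwd{d}\big(r\sqrt{2(1-\omega\cdot\omega')}\big)\, \widehat{u_0}(r\omega)\, \overline{\widehat{u_0}(r\omega')}\, d\omega\, d\omega'\, dr .
\]
Making this rigorous is, I expect, the main obstacle: the identity $\int_{\R} e^{-it\sigma}\,dt = 2\pi\delta(\sigma)$ is purely distributional, so turning it into the displayed formula is essentially an Agmon--H\"ormander/limiting-absorption trace computation (equivalently, an application of the spectral theorem for $-\Laplacian$ together with its spectral measure). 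I would first establish it for $u_0$ in a dense class with $\widehat{u_0}$ compactly supported away from the origin, and then pass to the limit, recording along the way the integrability hypotheses on $(w,\psi)$ that keep all of the integrals below finite.

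For the second reduction, I fix $r$ and observe that the kernel $(\omega,\omega')\mapsto \Fwd{d}(r\sqrt{2(1-\omega\cdot\omega')})$ depends on $\omega,\omega'$ only through $\omega\cdot\omega'$. Hence, by the Funk--Hecke theorem (Theorem \ref{thm:Funk-Hecke}), every $Y\in\HPk{k}$ is an eigenfunction of the associated integral operator on $L^2(\S^{d-1})$, with eigenvalue whose value is precisely the Legendre-polynomial integral appearing in \eqref{eq:lambdak 2}. Expanding $\widehat{u_0}(r\omega) = \sum_k\sum_m a_{k,m}(r)\Ykn{k}{m}(\omega)$ in a real orthonormal basis $\{\Ykn{k}{m}\}$ adapted to the spherical harmonics decomposition (Theorem \ref{thm:spherical harmonics decomposition}), orthogonality of the eigenspaces kills all cross terms, and after collecting the powers of $r$ and the constants the displayed quantity equals $(2\pi)^{1-d}\sum_{k,m}\int_0^\infty \lambdakd{k}{d}(r)\,\abs{b_{k,m}(r)}^2\,dr$, where $\lambdakd{k}{d}$ is given by \eqref{eq:lambdak 2} and $\{b_{k,m}\}$ are the Plancherel-normalised radial coefficients, so that $\norm{u_0}_{L^2(\R^d)}^2 = \sum_{k,m}\int_0^\infty \abs{b_{k,m}(r)}^2\,dr$. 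The alternative formula \eqref{eq:lambdak 1} then follows by an independent evaluation of the same eigenvalue: writing $\Fwd{d}(\abs{\xi-\eta}) = \int_{\R^d} w(\abs{x}) e^{-ix\cdot(\xi-\eta)}\,dx$ and using the Bochner--Hecke identity $\int_{\S^{d-1}} e^{-irx\cdot\omega} Y(\omega)\,d\omega = (2\pi)^{d/2} i^{-k} (r\abs{x})^{1-d/2} \BesselJ{k+d/2-1}(r\abs{x})\, Y(x/\abs{x})$ reduces the angular integral to $\int_0^\infty t\, w(t) (\BesselJ{k+d/2-1}(rt))^2\, dt$ times the explicit constant in \eqref{eq:lambdak 1}.

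It remains to read off the operator norm and the extremisers. The left-hand side of \eqref{eq:smoothing Schrodinger} is now the quadratic form of $(2\pi)^{1-d}$ times the orthogonal direct sum over $(k,m)$ of the operators ``multiplication by $\lambdakd{k}{d}(r)$'' on $L^2(\positiveR, dr)$, acting on $\{b_{k,m}\}$ with $\sum_{k,m}\int_0^\infty \abs{b_{k,m}}^2 = \norm{u_0}^2$. Since the operator norm of multiplication by $\lambdakd{k}{d}$ is $\esssup_{r>0}\lambdakd{k}{d}(r)$ and the norm of a direct sum is the supremum of the norms, we obtain $\Sconstwpd{w}{\psi}{d} = (2\pi)^{1-d}\sup_{k\in\N}\esssup_{r>0}\lambdakd{k}{d}(r) = (2\pi)^{1-d}\lambdasupd{d}$, which is the stated identity. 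Finally, a normalised $u_0$ realises \eqref{eq:extremiser Schrodinger} if and only if, for each $(k,m)$, the coefficient $b_{k,m}$ is supported (up to a null set) on $\set{r>0}{\lambdakd{k}{d}(r) = \lambdasupd{d}}$; a non-zero function in $L^2$ with this property exists for some $(k,m)$ precisely when the Lebesgue measure of this set is positive for some $k$, which is exactly the claimed characterisation.
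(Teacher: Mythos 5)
Your proposal is correct and follows essentially the same route the paper attributes to \citet{BSS2015} and sketches in Remark \ref{remark:Schrodinger}: diagonalise the smoothing quadratic form (equivalently $\adjoint{\SchrodingerS}\SchrodingerS$) using the spherical harmonics decomposition and the Funk--Hecke theorem, identify the eigenvalues with $\lambdakd{k}{d}$ (your Bochner--Hecke computation of \eqref{eq:lambdak 1} and your constants check out), and read off the norm of the resulting direct sum of multiplication operators together with the support characterisation of extremisers. Your honest flagging of the distributional $\delta(\abs{\xi}^2-\abs{\eta}^2)$ step matches the paper's standing assumption that $(w,\psi)$ are nice enough to justify the formal calculation, so no gap remains.
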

	See Proposition \ref{prop:properties of lambdak} for some basic properties of $\lambdak{k}$.
	\begin{remark}
		To be precise, we assume that $w, \psi \colon (0, \infty) \to [0, \infty)$ are are sufficiently nice to justify some formal calculation. See Proof of Theorem \ref{thm:Ekn} for details.
	\end{remark}
	\begin{remark}
		Previous to \citet{BSS2015}, \citet[Theorem 4.1]{Wal2002} established Theorem \ref{thm:Schrodinger} except for the second expression of $\lambdakd{k}{d}$ \eqref{eq:lambdak 2}.
	\end{remark}
	Now we are going to discuss the Dirac equation.
	Let $d \geq 2$, $\dimN \coloneqq 2^{\lfloor(d+1)/2\rfloor}$, 
	and $\{ \gammaj{j} \}_{1 \leq j \leq d+1}$ 
	be a family of $\dimN \times \dimN$ Hermitian matrices satisfying the anti-commutation relation $\gammaj{j} \gammaj{k}  + \gammaj{k} \gammaj{j} = 2\delta_{jk} \matrixIN{\dimN}$. Such a family is usually called the gamma matrices or the Dirac matrices. 
	The $d$-dimensional Dirac equation with mass $m \geq 0$ is given by 
	\begin{equation} \label{eq:Dirac}
		\begin{cases}
			i\partial_{t} u(x,t) = \Diracopm{m} u(x, t), & (x, t) \in \R^d \times \R , \\
			u(x,0) = f(x) , & x \in \R^d .
		\end{cases}
	\end{equation}
	Here $u$ and $f$ are $\C^\dimN$-valued, and
	Dirac operator $\Diracopm{m}$ is the Fourier multiplier operator whose symbol is
	\begin{equation}
		\Diracphim{m}(\xi) \coloneqq \sum_{j=1}^{d} \xi_j \gammajd{j}{d} + m\gammajd{d+1}{d} .
	\end{equation}
	Note that $\Diracphim{m}$ satisfies
	\begin{equation}
		\Diracphim{m}(\xi)^2 = (r^2 + m^2) \matrixIN{\dimN} .
	\end{equation}
	For simplicity, hereinafter we write
	\begin{equation}
		\phim{m}(r) \coloneqq (r^2 + m^2)^{1/2} .
	\end{equation}
	Since $\Diracopm{m}^2 = ( - \Laplacian + m^2 ) \matrixIN{\dimN}$, it is natural to expect that the smoothing estimate for the Dirac equation is similar to that for the relativistic Schr\"{o}dinger equation:
	\begin{equation} \label{eq:relativistic Schrodinger}
	\begin{cases}
		i \partial_t u(x, t) = ( - \Laplacian + m^2 )^{1/2} u(x, t) , & (x, t) \in \R^d \times \R , \\
		u(x, 0) = u_0(x) , & x \in \R^d .
	\end{cases}
\end{equation}
In the case of the relativistic Schr\"{o}dinger equation, we can show that the smoothing estimate is actually equivalent to that for the Schr\"{o}dinger equation thanks to the following \emph{comparison principle} established by \citet*{BRS2020}:
\begin{quotetheorem}[{\cite[Corollary 4.5]{BRS2020}}] \label{thm:comparison principle} 
Let $H$ be a self-adjoint operator on $L^2(\R^d)$ or $L^2(\R^d, \C^\dimN)$ satisfying $\sigma(H) = \sigma_{\text{ac}}(H)$, and
assume that 
\begin{equation} 
	\int_{t \in \R} \int_{x \in \R^d} w_0(x) \abs{\psi_0(H) e^{-itH} u_0(x)}^2 \, dx \, dt \leq C\norm{u_0}^2_{L^2} 
\end{equation}
holds for some $C > 0$, where $w \colon \R^d \to [0, \infty)$ and $\psi \colon \sigma(H) \to \R$ are some sufficiently nice functions (see \cite[Assumptions 3.1, 3.2]{BRS2020} for details). 
Now let $I \subset \sigma(H)$ be an open interval, $a \in C^1(I \to \R)$ be such that $a' > 0$, and $E(I)$ be the spectral projection onto $I$ associated with $H$.
Then 
\begin{equation} 
	\int_{t \in \R} \int_{x \in \R^d} w_0(x) \abs{ 
		\abs{ a'(H) }^{1/2} \psi_0(H) e^{-it a(H)} E(I) u_0(x)}^2 \, dx \, dt \leq C\norm{ E(I) u_0}^2_{L^2}  
\end{equation}
holds for the same constant $C > 0$.
\end{quotetheorem}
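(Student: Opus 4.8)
The plan is to reduce the statement to the identity
\begin{equation} \tag{$\ast$} \label{eq:cp-identity}
	\int_{\R} \! \int_{\R^d} w_0(x) \Abs{ \abs{a'(H)}^{1/2} \psi_0(H) e^{-ita(H)} v(x) }^2 \, dx \, dt
	= \int_{\R} \! \int_{\R^d} w_0(x) \abs{ \psi_0(H) e^{-itH} v(x) }^2 \, dx \, dt
\end{equation}
valid for every $v$ in the range of $E(I)$. Granting \eqref{eq:cp-identity} and applying it with $v \coloneqq E(I) u_0$, the right-hand side is at most $C \norm{v}_{L^2}^2 = C \norm{E(I) u_0}_{L^2}^2$ by the hypothesised estimate with initial datum $v$, which is exactly the conclusion we want. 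Hence everything is contained in \eqref{eq:cp-identity}, whose two sides differ only by replacing the evolution $e^{-itH}$ by $e^{-ita(H)}$ and inserting the compensating factor $\abs{a'(H)}^{1/2}$.

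To prove \eqref{eq:cp-identity} I would diagonalise $H$. Since $\sigma(H) = \sigma_{\text{ac}}(H)$, there is a unitary $U$ from the underlying $L^2$-space onto a direct integral $\int^{\oplus}_{\sigma(H)} \Hilbert_\lambda \, d\lambda$ with respect to Lebesgue measure, under which $UHU^{-1}$, $U e^{-itH} U^{-1}$, $U e^{-ita(H)} U^{-1}$, $U \abs{a'(H)}^{1/2} U^{-1}$ and $E(I)$ become multiplication by $\lambda$, $e^{-it\lambda}$, $e^{-ita(\lambda)}$, $\abs{a'(\lambda)}^{1/2}$ and the indicator function of $I$, respectively. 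Set $L \coloneqq \psi_0(H)\, w_0\, \psi_0(H)$ and let $\widehat L(\lambda, \mu) \colon \Hilbert_\mu \to \Hilbert_\lambda$ be its operator-valued kernel in this representation; recall also that $\abs{a'(H)}^{1/2}$ commutes with $\psi_0(H)$. Then both sides of \eqref{eq:cp-identity} take the form $\int_{\R} \innerproduct{ e^{itg(H)} L_g\, e^{-itg(H)} v }{ v } \, dt$, with $g = \id$ and $L_g = L$ on the right, and $g = a$ and $L_g = \abs{a'(H)}^{1/2} L \abs{a'(H)}^{1/2}$ on the left. Performing the $t$-integration first produces $\int_\R e^{it(g(\lambda) - g(\mu))} \, dt = 2\pi\, \delta(g(\lambda) - g(\mu))$, which collapses the $(\lambda, \mu)$-integral to the diagonal. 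On the right this gives $2\pi \int_{\sigma(H)} \innerproduct{ \widehat L(\lambda,\lambda)\widehat v(\lambda) }{ \widehat v(\lambda) } \, d\lambda$. On the left, since $v$ is spectrally supported in $I$, on which $a$ is a $C^1$-diffeomorphism, we have $\delta(a(\lambda) - a(\mu)) = \abs{a'(\lambda)}^{-1}\, \delta(\lambda - \mu)$ there, and the Jacobian $\abs{a'(\lambda)}^{-1}$ is cancelled exactly by the factor $\abs{a'(\lambda)}$ appearing on the diagonal of $\abs{a'(H)}^{1/2} L \abs{a'(H)}^{1/2}$; what remains is $2\pi \int_{I} \innerproduct{ \widehat L(\lambda,\lambda)\widehat v(\lambda) }{ \widehat v(\lambda) } \, d\lambda$. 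Since $\widehat v$ vanishes off $I$, these two diagonal integrals coincide, which is \eqref{eq:cp-identity}.

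The step that requires genuine care --- and the main obstacle --- is justifying the passage to the diagonal, that is, making the formal $\delta$-manipulation rigorous; this is precisely the content of Kato's theory of $H$-smooth operators. The hypothesised space-time bound says exactly that $w_0^{1/2}\psi_0(H)$ is $H$-smooth, which, together with $\sigma(H) = \sigma_{\text{ac}}(H)$ and the regularity assumed on $(w_0, \psi_0)$, guarantees that the fibre operators of $w_0^{1/2}\psi_0(H)$ over the spectral parameter --- equivalently, the diagonal values $\widehat L(\lambda,\lambda)$ --- are well defined, bounded, and locally bounded in $\lambda$, and that $\int_\R \norm{ w_0^{1/2}\psi_0(H)\, e^{-itH} v }^2 \, dt = 2\pi \int \innerproduct{ \widehat L(\lambda,\lambda)\widehat v(\lambda) }{ \widehat v(\lambda) } \, d\lambda$ holds as a genuine identity (alternatively, $\widehat L(\lambda,\lambda)$ can be realised as a boundary value of $\tfrac{1}{\pi}\, w_0^{1/2}\psi_0(H)\, \Im (H - \lambda - i0)^{-1}\, \psi_0(H)\, w_0^{1/2}$ via the limiting absorption principle). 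Two supplementary points are routine: $\abs{a'}$ need not be bounded on all of $I$, so one first proves \eqref{eq:cp-identity} for $v$ spectrally supported in a compact subinterval of $I$ and then exhausts $I$ by monotone convergence; and $a(H) E(I)$ again has purely absolutely continuous spectrum --- being $H$ restricted to $\operatorname{Ran} E(I)$ composed with the $C^1$-diffeomorphism $a$ --- so the smoothing-theoretic framework applies on the $a(H)$-side as well. This essentially reproduces the argument of \cite{BRS2020}.
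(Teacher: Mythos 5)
This statement is not proved in the paper at all: it is quoted verbatim as \cite[Corollary 4.5]{BRS2020}, so there is no in-paper argument to compare against. Your sketch essentially reconstructs the mechanism of that reference: the space-time norm depends only on the diagonal (fibre) values of $\psi_0(H) w_0 \psi_0(H)$ in a spectral representation of $H$, and the factor $\abs{a'(H)}^{1/2}$ exactly cancels the Jacobian $\abs{a'(\lambda)}^{-1}$ produced by $\delta(a(\lambda)-a(\mu))$ on $\operatorname{Ran} E(I)$, which is precisely how the comparison principle is obtained in \cite{BRS2020} via their spectral identities. The formal computation and the reduction to the identity $(\ast)$ are correct; the only caveat is that the rigorous justification of the passage to the diagonal (restriction/limiting absorption, under their Assumptions 3.1--3.2) is delegated to that machinery rather than carried out, which is acceptable here since the result is being cited, not reproved.
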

In fact, if we have
\begin{equation} \tag*{\eqref{eq:smoothing Schrodinger}}
	\int_{t \in \R}\int_{x \in \R^d} w(\abs{x}) \abs{\psi(\abs{\D}) e^{it\Laplacian} u_0(x)}^2 \, dx \, dt \leq C\norm{u_0}^2_{L^2(\R^d)}
\end{equation}
for some $C > 0$, then using Theorem \ref{thm:comparison principle} with 
\begin{equation}
	H = - \Laplacian , \quad 
	w_0(x) = w(\abs{x}) , \quad
	\psi_0(r) = \psi(r^{1/2}) , \quad
	I = (0, \infty) , \quad
	a(r) = (r + m^2)^{1/2}
\end{equation}
implies
\begin{equation} \label{eq:smoothing relativistic Schrodinger}
	\int_{t \in \R}\int_{x \in \R^d} w(\abs{x}) \abs{ (- \Laplacian + m^2)^{-1/4} \psi(\abs{\D}) e^{-it (- \Laplacian + m^2)^{1/2}} u_0(x)}^2 \, dx \, dt \leq 2 C\norm{u_0}^2_{L^2(\R^d)} .
\end{equation}
Conversely, if we have \eqref{eq:smoothing relativistic Schrodinger}, then using Theorem \ref{thm:comparison principle} with 
\begin{equation}
	H = (- \Laplacian + m^2)^{1/2} , \,\,
	w_0(x) = w(\abs{x}) , \,\,
	\psi_0(r) =r^{-1/2} \psi(r^2 - m^2) , \,\,
	I = (m, \infty) , \,\,
	a(r) = r^2 - m^2
\end{equation}
implies \eqref{eq:smoothing Schrodinger}. 
Therefore, we have the following:
\begin{corollary}
Let $\RSconstwpmd{w}{\psi}{m}{d}$ be the optimal constant for \eqref{eq:smoothing relativistic Schrodinger}, that is, 
\begin{equation}
\RSconstwpmd{w}{\psi}{m}{d} \coloneqq \sup_{\substack{u_0 \in L^2(\R^d) \\ 
		\norm{u_0} = 1}} \int_{t \in \R}\int_{x \in \R^d} w(\abs{x}) \abs{ (- \Laplacian + m^2)^{-1/4} \psi(\abs{\D}) e^{-it (- \Laplacian + m^2)^{1/2}} u_0(x)}^2 \, dx \, dt .
\end{equation}
Then we have
\begin{equation}
\RSconstwpmd{w}{\psi}{m}{d} = 2 \Sconstwpd{w}{\psi}{d} .
\end{equation}
\end{corollary}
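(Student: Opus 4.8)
The plan is to promote the two applications of Theorem \ref{thm:comparison principle} already sketched above into a genuine two-sided comparison of the optimal constants. First I would fix $C \coloneqq \Sconstwpd{w}{\psi}{d}$, the optimal constant in \eqref{eq:smoothing Schrodinger}, and invoke Theorem \ref{thm:comparison principle} with $H = -\Laplacian$ (self-adjoint on $L^2(\R^d)$ with $\sigma(H) = \sigma_{\mathrm{ac}}(H) = [0,\infty)$), $w_0(x) = w(\abs{x})$, $\psi_0(r) = \psi(r^{1/2})$, $I = (0,\infty)$ and $a(r) = (r+m^2)^{1/2}$. Here $a'(r) = \tfrac12 (r+m^2)^{-1/2} > 0$ on $I$, so the functional calculus gives $\abs{a'(H)}^{1/2} = 2^{-1/2}(-\Laplacian+m^2)^{-1/4}$, $\psi_0(H) = \psi(\abs{\D})$, $e^{-ita(H)} = e^{-it(-\Laplacian+m^2)^{1/2}}$, while $E(I) = \id$ since $\{0\}$ carries no spectral mass. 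Feeding this into the conclusion of Theorem \ref{thm:comparison principle} and multiplying through by $2$ produces \eqref{eq:smoothing relativistic Schrodinger} with admissible constant $2C$, whence $\RSconstwpmd{w}{\psi}{m}{d} \leq 2\Sconstwpd{w}{\psi}{d}$.

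Next I would run the argument in reverse: set $C' \coloneqq \RSconstwpmd{w}{\psi}{m}{d}$ and apply Theorem \ref{thm:comparison principle} with $H = (-\Laplacian+m^2)^{1/2}$ (self-adjoint, $\sigma(H) = \sigma_{\mathrm{ac}}(H) = [m,\infty)$), $w_0(x) = w(\abs{x})$, $\psi_0$ chosen so that $\psi_0(H)$ equals the multiplier $(-\Laplacian+m^2)^{-1/4}\psi(\abs{\D})$ appearing in \eqref{eq:smoothing relativistic Schrodinger}, $I = (m,\infty)$ and $a(r) = r^2 - m^2$. Now $a'(r) = 2r > 0$ on $I$, so $\abs{a'(H)}^{1/2}\psi_0(H) = 2^{1/2}\psi(\abs{\D})$, and $a(H) = -\Laplacian$ gives $e^{-ita(H)} = e^{it\Laplacian}$, while again $E(I) = \id$. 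Theorem \ref{thm:comparison principle} then yields \eqref{eq:smoothing Schrodinger} with admissible constant $C'/2$, so that $\Sconstwpd{w}{\psi}{d} \leq \tfrac12 \RSconstwpmd{w}{\psi}{m}{d}$; combining this with the first step gives the claimed identity.

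The argument is almost entirely bookkeeping, so the only thing requiring care — and the closest thing to an obstacle — is checking that each quadruple $(w_0, \psi_0, a, I)$ genuinely meets the regularity hypotheses behind Theorem \ref{thm:comparison principle} (namely \cite[Assumptions 3.1, 3.2]{BRS2020}) under our standing assumptions on $(w,\psi)$, and that no admissible constant is lost in passing to $E(I)$, which is clear here since $\sigma(H)\setminus I$ is Lebesgue-null and $H$ has purely absolutely continuous spectrum in both instances. The functional-calculus identities for $\abs{a'(H)}^{1/2}\psi_0(H)$ are precisely the ones displayed before the statement, so no new computation is involved.
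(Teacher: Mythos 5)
Your proposal is correct and follows essentially the same route as the paper: both directions are exactly the two applications of Theorem \ref{thm:comparison principle} displayed before the corollary, with $(H,w_0,\psi_0,I,a)=(-\Laplacian, w(\abs{\cdot}), \psi(\cdot^{1/2}), (0,\infty), (\cdot+m^2)^{1/2})$ for $\RSconstwpmd{w}{\psi}{m}{d}\leq 2\Sconstwpd{w}{\psi}{d}$ and $(H,a)=((-\Laplacian+m^2)^{1/2}, \cdot^2-m^2)$ with $\psi_0(H)=(-\Laplacian+m^2)^{-1/4}\psi(\abs{\D})$ for the reverse inequality, the factor $2$ coming from $\abs{a'(H)}^{1/2}$ in each case. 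Your extra remarks on $E(I)=\id$ and the hypotheses of \cite[Assumptions 3.1, 3.2]{BRS2020} are consistent with the paper's standing assumptions on $(w,\psi)$, so nothing is missing.
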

Now let $\Diracconstmd{m}{d} = \Diracconstwpmd{w}{\psi}{m}{d}$ be the optimal constant of the following smoothing estimate for the Dirac equation:
	\begin{equation} \label{eq:smoothing Dirac}
	\int_{t \in \R}\int_{x \in \R^d} w(\abs{x}) \abs{ (- \Laplacian + m^2)^{-1/4} \psi(\abs{\D}) e^{-it\Diracopm{m}}u_0(x)}^2 \, dx \, dt \leq C\norm{u_0}^2_{L^2(\R^d, \C^\dimN)} .
\end{equation}
In this case, by an argument similar to that for $2 \Sconstwpd{w}{\psi}{d} \leq \RSconstwpmd{w}{\psi}{m}{d}$, we can show that 
\begin{equation}
	\Sconstwpd{w}{\psi}{d} \leq \Diracconstwpmd{w}{\psi}{m}{d} .
\end{equation}
Indeed, if we have \eqref{eq:smoothing Dirac}, then using Theorem \ref{thm:comparison principle} with 
\begin{align}
	&H = H_m , &&
	w_0(x) = w(\abs{x}) , && 
	\psi_0(r) = \abs{r}^{-1/2} \psi(r^2 - m^2) , &&
	I = (m, \infty) , &&
	a(r) = r^2 - m^2
	\shortintertext{and} 
	&H = H_m , &&
	w_0(x) = w(\abs{x}) , &&
	\psi_0(r) = \abs{r}^{-1/2} \psi(r^2 - m^2) , &&
	I = (- \infty, -m) , &&
	a(r) = - r^2 + m^2 .
\end{align}
implies 
\begin{align}
	\int_{t \in \R} \int_{x \in \R^d} w(\abs{x}) \abs{ 
		\psi(\abs{D}) e^{it \Laplacian \matrixIN{\dimN}} E((m, \infty)) u_0(x)}^2 \, dx \, dt &\leq \frac{1}{2} C \norm{ E((m, \infty)) u_0}^2_{L^2(\R^d, \C^\dimN)} 
	\shortintertext{and}
	\int_{t \in \R} \int_{x \in \R^d} w(\abs{x}) \abs{ 
		\psi(\abs{D}) e^{- it \Laplacian \matrixIN{\dimN}} E((- \infty, -m)) u_0(x)}^2 w(\abs{x}) \, dx \, dt &\leq \frac{1}{2} C \norm{ E((- \infty, -m)) u_0}^2_{L^2(\R^d, \C^\dimN)} , 
\end{align}
respectively. 
Therefore, we obtain
\begin{align}
	&\quad \int_{t \in \R} \int_{x \in \R^d} w(\abs{x}) \abs{ 
		\psi(\abs{D}) e^{it \Laplacian \matrixIN{\dimN}} u_0(x)}^2 \, dx \, dt \\
	&\leq 2 \int_{t \in \R} \int_{x \in \R^d} w(\abs{x}) \abs{ 
		\psi(\abs{D}) e^{it \Laplacian \matrixIN{\dimN}} E((m, \infty)) u_0(x)}^2 \, dx \, dt \\
	&\quad + 2 \int_{t \in \R} \int_{x \in \R^d} w(\abs{x}) \abs{ 
		\psi(\abs{D}) e^{- it \Laplacian \matrixIN{\dimN}} E((- \infty, -m)) u_0(x)}^2 \, dx \, dt \\
	&\leq C ( \norm{ E((m, \infty)) u_0}_{L^2(\R^d, \C^\dimN)}^2 + \norm{ E((- \infty, -m)) u_0}_{L^2(\R^d, \C^\dimN)}^2 ) \\
	&= C \norm{u_0}_{L^2(\R^d, \C^\dimN)}^2 , 
\end{align}
which means that $\Sconstwpd{w}{\psi}{d} \leq \Diracconstwpmd{w}{\psi}{m}{d}$.
Meanwhile, it is impossible to obtain $\Sconstwpd{w}{\psi}{d} \gtrsim \Diracconstwpmd{w}{\psi}{m}{d}$ via Theorem \ref{thm:comparison principle}, since  
there do not exist $a \in C^1(I \to \R)$ such that $a(-\Laplacian) = \Diracopm{m}$.
Nevertheless, \citet{Iko2022} showed the following by estimating $\Diracconstmd{m}{d}$ directly:
\begin{quotetheorem}[{\cite[Theorem 2.1]{Iko2022}}] \label{thm:Schrodinger implies Dirac}
	Let $d \geq 2$. Then we have
	\begin{equation}
	\Diracconstwpmd{w}{\psi}{m}{d} \leq 2 \Sconstwpd{w}{\psi}{d}.
	\end{equation}
\end{quotetheorem}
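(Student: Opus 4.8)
\noindent\emph{Proof proposal.}
The plan is to split the Dirac flow into its positive- and negative-energy parts, bound each by the relativistic Schr\"odinger smoothing estimate \eqref{eq:smoothing relativistic Schrodinger}, and observe that the cross term produced by the splitting integrates to zero --- so that the constant $\RSconstwpmd{w}{\psi}{m}{d} = 2\Sconstwpd{w}{\psi}{d}$ is not doubled. On the Fourier side $\Diracphim{m}(\xi)$ is Hermitian with $\Diracphim{m}(\xi)^2 = \phim{m}(\abs{\xi})^2\matrixIN{\dimN}$, so the matrices $P_\pm(\xi) \coloneqq \tfrac12\bigl(\matrixIN{\dimN} \pm \Diracphim{m}(\xi)/\phim{m}(\abs{\xi})\bigr)$ are the orthogonal projections onto the eigenspaces of $\Diracphim{m}(\xi)$ for the eigenvalues $\pm\phim{m}(\abs{\xi})$; let $\Pi_\pm$ be the associated Fourier multiplier operators on $L^2(\R^d,\C^\dimN)$. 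Then $\Pi_+ + \Pi_- = \id$, $\norm{u_0}^2 = \norm{\Pi_+ u_0}^2 + \norm{\Pi_- u_0}^2$, and, since the scalar multipliers $\psi(\abs{\D})$ and $(-\Laplacian+m^2)^{-1/4}$ commute with $\Pi_\pm$,
\begin{equation*}
	(-\Laplacian+m^2)^{-1/4}\psi(\abs{\D})e^{-it\Diracopm{m}}u_0 = F_+(x,t) + F_-(x,t) ,
\end{equation*}
where $F_\pm(x,t) \coloneqq (-\Laplacian+m^2)^{-1/4}\psi(\abs{\D})e^{\mp it\phim{m}(\abs{\D})}\Pi_\pm u_0$.

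For the ``$+$'' part, $\int_{t\in\R}\int_{x\in\R^d} w(\abs{x})\abs{F_+(x,t)}^2\,dx\,dt$ is precisely the left-hand side of \eqref{eq:smoothing relativistic Schrodinger} with datum $\Pi_+ u_0$, so (applying \eqref{eq:smoothing relativistic Schrodinger} componentwise and using the Corollary above) it is $\leq \RSconstwpmd{w}{\psi}{m}{d}\norm{\Pi_+ u_0}^2 = 2\Sconstwpd{w}{\psi}{d}\norm{\Pi_+ u_0}^2$; for the ``$-$'' part, the substitution $t\mapsto -t$ reduces it to the same bound with datum $\Pi_- u_0$. In particular both integrals are finite, hence $F_\pm(x,\cdot)\in L^2(\R,\C^\dimN)$ for almost every $x$, and by the Cauchy--Schwarz inequality (first in $t$, then in $x$ against $w(\abs{x})\,dx$) the cross term $\int_{t\in\R}\int_{x\in\R^d} w(\abs{x})\innerproduct{F_+(x,t)}{F_-(x,t)}_{\C^\dimN}\,dx\,dt$ converges absolutely.

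The key step is that this cross term vanishes. Fix $x$ with $F_\pm(x,\cdot)\in L^2(\R,\C^\dimN)$. Writing $F_+(x,\cdot)$ as a superposition over $\xi$ of the functions $t\mapsto e^{-it\phim{m}(\abs{\xi})}$, $\phim{m}(\abs{\xi})\in[m,\infty)$, shows that its Fourier transform in $t$ is supported in a half-line, while that of $F_-(x,\cdot)$ is supported in the opposite half-line; these are disjoint when $m>0$ and meet only in the null set $\{0\}$ when $m=0$. Hence, by Plancherel in $t$, $\int_{t\in\R}\innerproduct{F_+(x,t)}{F_-(x,t)}_{\C^\dimN}\,dt = 0$ for almost every $x$, so the cross term is zero by Fubini. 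Therefore the left-hand side of \eqref{eq:smoothing Dirac} equals $\sum_{\pm}\int_{t\in\R}\int_{x\in\R^d} w(\abs{x})\abs{F_\pm(x,t)}^2\,dx\,dt$, which by the above is $\leq 2\Sconstwpd{w}{\psi}{d}(\norm{\Pi_+ u_0}^2 + \norm{\Pi_- u_0}^2) = 2\Sconstwpd{w}{\psi}{d}\norm{u_0}^2$; that is, $\Diracconstwpmd{w}{\psi}{m}{d}\leq 2\Sconstwpd{w}{\psi}{d}$.

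The one point that needs genuine care is the Fourier-support argument above; conceptually it says that positive (resp.\ negative) energy for $\Diracopm{m}$ corresponds to positive (resp.\ negative) temporal frequency of the solution, which is exactly what makes the cross term --- and with it the extra factor $2$ one would get from a naive triangle inequality --- disappear. To make it rigorous I would first establish $\int_{t\in\R}\innerproduct{F_+(x,t)}{F_-(x,t)}_{\C^\dimN}\,dt = 0$ for $u_0$ in a dense class, e.g.\ with $\widehat{u_0}\in C_c^\infty(\R^d\setminus\{0\},\C^\dimN)$, where Fubini and the change of variables $\xi\mapsto\phim{m}(\abs{\xi})$ are unproblematic, and then pass to general $u_0\in L^2(\R^d,\C^\dimN)$ by density; one should also record at the outset that $\Diracopm{m}$ is self-adjoint with $\sigma(\Diracopm{m}) = \sigma_{\text{ac}}(\Diracopm{m}) = (-\infty,-m]\cup[m,\infty)$ and that $w,\psi$ obey the standing regularity assumptions, so that all operators above are well defined.
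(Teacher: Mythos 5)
Your argument is correct, but it is not the route taken here: the paper does not reprove this statement at all (it is quoted from Ikoma), and within the paper's own framework the inequality falls out of Theorem \ref{thm:Dirac intro}, since \eqref{eq:Diraclambdak} gives $\Diraclambdak{k}(r) \leq 2\max\{\lambdakd{k}{d}(r), \lambdakd{k+1}{d}(r)\} \leq 2\lambdasupd{d}$, whence $(2\pi)^{d-1}\Diracconstwpmd{w}{\psi}{m}{d} = \Diraclambdasup \leq 2\lambdasupd{d} = 2(2\pi)^{d-1}\Sconstwpd{w}{\psi}{d}$. What you do instead is split $e^{-it\Diracopm{m}}u_0$ by the spectral projections $\Pi_\pm$, bound each half componentwise by the relativistic Schr\"odinger estimate \eqref{eq:smoothing relativistic Schrodinger} (constant $2\Sconstwpd{w}{\psi}{d}$), and kill the cross term by observing that the two halves have temporal frequencies in opposite half-lines $(-\infty,-m]$ and $[m,\infty)$; together with $\norm{\Pi_+u_0}^2+\norm{\Pi_-u_0}^2=\norm{u_0}^2$ this avoids the factor $4$ a naive triangle inequality would give. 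This is sound, and the orthogonality you exploit is exactly the mechanism that appears inside the paper's computation of $\adjoint{\DiracS}\DiracS$ in \eqref{item:S^*Sf}, where the $\nu_1\neq\nu_2$ terms drop out because $\delta(\phim{m}(\abs{\xi})+\phim{m}(\abs{\eta}))$ vanishes on the relevant region; your density scheme for making the disjoint-support/Plancherel step rigorous (together with the remark that $F_\pm(x,\cdot)\in L^2_t$ only needs to hold for a.e.\ $x$ with $w(\abs{x})>0$, and that the cross term is a bounded bilinear form when $\Sconstwpd{w}{\psi}{d}<\infty$, the other case being trivial) closes the one delicate point. The trade-off is clear: your argument is more elementary and transfers the known Schr\"odinger bound to the Dirac equation without any spherical-harmonics input, but it only yields the upper bound $2\Sconstwpd{w}{\psi}{d}$, whereas the paper's decomposition via Theorem \ref{thm:Ekn} is what produces the exact value of $\Diracconstwpmd{w}{\psi}{m}{d}$ and the characterization of extremisers.
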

In summary, we have
\begin{equation} \label{eq:equivalence of Dirac and Schrodinger}
\Sconstwpd{w}{\psi}{d} \leq \Diracconstwpmd{w}{\psi}{m}{d} \leq 2 \Sconstwpd{w}{\psi}{d} = \RSconstwpmd{w}{\psi}{m}{d} .
\end{equation}
On the other hand, an explicit formula for $\Diracconstwpmd{w}{\psi}{m}{d}$ similar to that for $\Sconstwpd{w}{\psi}{d}$ given in Theorem \ref{thm:Schrodinger}, is known only in the case $d=2, 3$, which were given by \citet[Theorem 2.2]{Iko2022} and \citet[Theorem 1.1]{IkS2024_3D}, respectively.
We extend these results to any dimensions $d \geq 2$.
	\begin{theorem} \label{thm:Dirac intro}
		Let $d \geq 2$.
		Then we have 
		\begin{equation}
			(2\pi)^{d-1} \Diracconstwpmd{w}{\psi}{m}{d} = \Diraclambdasup \coloneqq \sup_{k\in\N}\esssup_{r>0}\Diraclambdak{k}(r), 
		\end{equation}
		where
		\begin{equation} \label{eq:Diraclambdak}
			\Diraclambdak{k}(r) \coloneqq
			\lambdakd{k}{d}(r) + \lambdakd{k+1}{d}(r) + \frac{m}{\phim{m}(r)} \abs{ \lambdakd{k}{d}(r) - \lambdakd{k+1}{d}(r) } .
		\end{equation}
		Furthermore, non-zero extremisers exist if and only if there exists $k \in \N$ such that the Lebesgue measure of
		\begin{equation}
			\set{ r > 0 }{ \Diraclambdak{k}(r) = \Diraclambdasup }
		\end{equation}
		is non-zero.
	\end{theorem}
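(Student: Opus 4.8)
\emph{Step 1 (fibering; band decomposition).} The plan is to run, for the Dirac propagator, the Fourier-side fibering behind Theorem~\ref{thm:Schrodinger} (the ``master'' computation underlying the proof of Theorem~\ref{thm:Ekn}), with the scalar Funk--Hecke step (Theorems~\ref{thm:spherical harmonics decomposition} and~\ref{thm:Funk-Hecke}) upgraded to a matrix-valued one. Write $\Diracphim{m}(r\omega)=r\,\Gamma_\omega+m\,\gammajd{d+1}{d}$ with $\Gamma_\omega:=\sum_{j=1}^{d}\omega_j\gammajd{j}{d}$; then $\Gamma_\omega,\gammajd{d+1}{d}$ are anticommuting selfadjoint unitaries, $\Diracphim{m}(r\omega)^2=\phim{m}(r)^2\matrixIN{\dimN}$, and $\Diracphim{m}(r\omega)$ has eigenvalues $\pm\phim{m}(r)$ with orthogonal eigenprojections $P^\pm(r\omega)=\tfrac12(\matrixIN{\dimN}\pm\Diracphim{m}(r\omega)/\phim{m}(r))$, on whose ranges the propagator acts by $e^{\mp it\phim{m}(r)}$. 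Splitting $\widehat{u_0}=\widehat{u_0}^{\,+}+\widehat{u_0}^{\,-}$ along these bands (orthogonal at every $\xi$, so $\norm{u_0}^2=\norm{u_0^{+}}^2+\norm{u_0^{-}}^2$) and inserting this into \eqref{eq:smoothing Dirac}: the $t$-integration annihilates the $+/-$ cross terms (they carry $\delta(\phim{m}(r)+\phim{m}(\rho))$, supported on the null set $\{r=\rho=0\}$ since $\phim{m}\ge m\ge0$) and turns each $\pm/\pm$ term into $2\pi\,\delta(\phim{m}(r)-\phim{m}(\rho))=2\pi\tfrac{\phim{m}(r)}{r}\delta(r-\rho)$. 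Carrying this Jacobian through with the symbol $\phim{m}(r)^{-1/2}$ of $(-\Laplacian+m^2)^{-1/4}$, with $\int_{\R^d}w(\abs x)e^{ix\cdot(\xi-\eta)}\,dx=\Fwd{d}(\abs{\xi-\eta})$ and $\abs{r\omega-r\omega'}=r\sqrt{2(1-\innerproduct{\omega}{\omega'})}$, one arrives (exactly as in the proof of Theorem~\ref{thm:Ekn}) at
\begin{equation}
\Diracconstwpmd{w}{\psi}{m}{d}=\esssup_{r>0}\ \max_{\pm}\ \tfrac{2\pi}{(2\pi)^{d}}\,r^{d-2}\psi(r)^2\,\Norm{(\mathsf T_r\otimes\matrixIN{\dimN})\big|_{V^\pm_r}},
\end{equation}
where $\mathsf T_r$ is the positive zonal scalar operator on $L^2(\S^{d-1})$ with kernel $\Fwd{d}(r\sqrt{2(1-\innerproduct{\omega}{\omega'})})$, $V^\pm_r=\Set{v\in L^2(\S^{d-1},\C^\dimN)}{v(\omega)\in P^\pm(r\omega)\C^\dimN\text{ a.e.}}$, and the two bands contribute a maximum, not a sum, because $\norm{u_0}^2$ splits orthogonally. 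By Theorems~\ref{thm:spherical harmonics decomposition} and~\ref{thm:Funk-Hecke}, $\mathsf T_r$ is diagonal on $\bigoplus_k\HPk{k}$ with eigenvalue $\nu_k(r):=2\lambdakd{k}{d}(r)/(r^{d-2}\psi(r)^2)\ge0$ on $\HPk{k}$ (nonnegativity from \eqref{eq:lambdak 1}).

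\emph{Step 2 (the Dirac-adapted spherical harmonics decomposition).} This is the crux. The multiplication operators $\Gamma_\bullet\colon v\mapsto\Gamma_\omega v(\omega)$ and $\gammajd{d+1}{d}$, as well as $\mathsf T_r\otimes\matrixIN{\dimN}$, all commute with the combined rotation--spin action of $\mathrm{Spin}(d)$ on $L^2(\S^{d-1},\C^\dimN)=\bigoplus_k\HPk{k}\otimes\C^\dimN$. Branching $\HPk{k}\otimes\C^\dimN$ into $\mathrm{Spin}(d)$-irreducibles — here the parity of $d$ enters through whether $\C^\dimN$ is one spinor module or two half-spinor modules and through the role of $\gammajd{d+1}{d}$ — one shows $L^2(\S^{d-1},\C^\dimN)$ splits into \emph{blocks} indexed by $k\in\N$, each $\cong T_k\otimes\C^2$ with $T_k$ an irreducible occurring once in $\HPk{k}\otimes\C^\dimN$ and once in $\HPk{k+1}\otimes\C^\dimN$. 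On a block, Schur's lemma together with the structural facts that $\Gamma_\bullet$ is odd for the degree-grading ($\omega_j\HPk{k}\subset\HPk{k-1}\oplus\HPk{k+1}$), that $\gammajd{d+1}{d}$ is a constant matrix hence even for it, and that both are selfadjoint unitaries anticommuting with one another force, in a suitable basis of the $\C^2$-factor (everything tensored with $\mathrm{id}_{T_k}$),
\begin{equation}
\mathsf T_r\otimes\matrixIN{\dimN}=\mathrm{diag}\bigl(\nu_k(r),\nu_{k+1}(r)\bigr),\qquad \Gamma_\bullet=\left(\begin{smallmatrix}0&1\\1&0\end{smallmatrix}\right),\qquad \gammajd{d+1}{d}=\pm\left(\begin{smallmatrix}1&0\\0&-1\end{smallmatrix}\right),
\end{equation}
the last sign being immaterial below. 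This is the ``modified spherical harmonics decomposition'' of the abstract, and constructing it uniformly over $d$ (including the parity and low-dimensional subtleties) is the main obstacle; the remaining steps are bookkeeping.

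\emph{Step 3 (linear algebra; conclusion).} On the $k$-th block $\Diracphim{m}(r\omega)=r\,\Gamma_\omega+m\,\gammajd{d+1}{d}=\pm\left(\begin{smallmatrix}m&r\\r&-m\end{smallmatrix}\right)$, with normalized $\phim{m}(r)$-eigenvector $v^+=(2\phim{m}(r)(\phim{m}(r)-m))^{-1/2}(r,\phim{m}(r)-m)$ (and $v^-$ with the coordinates swapped). Hence $V^\pm_r$ meets this block in $T_k\otimes\C v^\pm$, and $(\mathsf T_r\otimes\matrixIN{\dimN})|_{T_k\otimes\C v^\pm}$ is the scalar $\innerproduct{\mathrm{diag}(\nu_k,\nu_{k+1})v^\pm}{v^\pm}=\tfrac12(\nu_k+\nu_{k+1})\pm\tfrac{m}{2\phim{m}(r)}(\nu_k-\nu_{k+1})$, using $r^2/(2\phim{m}(\phim{m}-m))=(\phim{m}+m)/(2\phim{m})$. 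Multiplying by $\tfrac{2\pi}{(2\pi)^d}r^{d-2}\psi(r)^2$ and substituting $\nu_j=2\lambdakd{j}{d}/(r^{d-2}\psi^2)$ yields $(2\pi)^{-(d-1)}(\lambdakd{k}{d}(r)+\lambdakd{k+1}{d}(r)\pm\tfrac{m}{\phim{m}(r)}(\lambdakd{k}{d}(r)-\lambdakd{k+1}{d}(r)))$, which is $\ge0$ since $\nu_j\ge0$ and $m/\phim{m}(r)\le1$; thus $\Norm{(\mathsf T_r\otimes\matrixIN{\dimN})|_{V^\pm_r}}$ is the supremum over $k$ of this quantity, and maximizing over the sign produces $(2\pi)^{-(d-1)}\sup_k\Diraclambdak{k}(r)$. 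Feeding this into the identity displayed in Step~1 gives $(2\pi)^{d-1}\Diracconstwpmd{w}{\psi}{m}{d}=\sup_k\esssup_{r>0}\Diraclambdak{k}(r)=\Diraclambdasup$. (Consistency check: $\max(\lambdakd{k}{d},\lambdakd{k+1}{d})\le\Diraclambdak{k}\le2\max(\lambdakd{k}{d},\lambdakd{k+1}{d})$ recovers $\Sconstwpd{w}{\psi}{d}\le\Diracconstwpmd{w}{\psi}{m}{d}\le2\Sconstwpd{w}{\psi}{d}$.)

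\emph{Step 4 (extremisers).} A function $u_0$ is an extremiser iff equality holds at every step above. The left side of \eqref{eq:smoothing Dirac} equals the sum of the two band-contributions, each $\le\Diracconstwpmd{w}{\psi}{m}{d}\norm{u_0^\pm}^2$, while $\norm{u_0}^2=\norm{u_0^+}^2+\norm{u_0^-}^2$; so an extremiser must saturate each nonzero band-contribution. Tracing through Step~3, saturation on the $\C v^{+}$ side at radius $r$ forces $\lambdakd{k}{d}(r)+\lambdakd{k+1}{d}(r)+\tfrac{m}{\phim{m}(r)}(\lambdakd{k}{d}(r)-\lambdakd{k+1}{d}(r))=\Diraclambdasup$ for some $k$, which — because $\Diraclambdak{k}$ carries $\abs{\lambdakd{k}{d}-\lambdakd{k+1}{d}}$ — can happen only if $\lambdakd{k}{d}(r)\ge\lambdakd{k+1}{d}(r)$ and $\Diraclambdak{k}(r)=\Diraclambdasup$, and symmetrically (with the inequality reversed) on the $\C v^{-}$ side. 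Since $\N$ is countable, a nonzero extremiser exists iff for some $k$ the set $\set{r>0}{\Diraclambdak{k}(r)=\Diraclambdasup}$ has positive Lebesgue measure; conversely, for such a $k$, taking $\widehat{u_0}(r\,\cdot)$ supported in $r$ on that set and, for each such $r$, lying in $T_k\otimes\C v^{+}$ where $\lambdakd{k}{d}(r)\ge\lambdakd{k+1}{d}(r)$ and in $T_k\otimes\C v^{-}$ otherwise produces a nonzero extremiser. The anticipated main difficulty throughout is Step~2.
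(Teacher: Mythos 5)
Your Steps 1, 3 and 4 are sound and run parallel to what the paper does in Section \ref{section:proof of thm:Dirac}: the positive/negative-energy splitting with vanishing cross terms, the compression of the zonal operator to the band subspaces, the $2\times2$ eigenvalue computation, and the extremiser bookkeeping are exactly the content of items \eqref{item:S^*Sf}--\eqref{item:operator norm} of Theorem \ref{thm:Dirac}; your band-by-band maximum is equivalent to the paper's $\DiracLambdak{k}=2\sum_{\nu}\Qmn\Lambdak{k}\Qmn$ since the two bands decouple, and the slip about ``$v^-$ with the coordinates swapped'' is harmless because only $\abs{v^{\pm}_1}^2,\abs{v^{\pm}_2}^2$ enter the diagonal quadratic form. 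The problem is Step 2. The existence of the block decomposition of $L^2(\S^{d-1},\C^\dimN)$ into pieces of the form $T_k\otimes\C^2$, on which the zonal operator has eigenvalues proportional to $\lambdakd{k}{d}$ and $\lambdakd{k+1}{d}$ while $\Gamma_{\bullet}$ and $\gammaj{d+1}$ act as $\sigma_1$ and $\pm\sigma_3$, is precisely the paper's main construction (Theorem \ref{thm:Ekn}, built from Propositions \ref{prop:invariant subspace}, \ref{prop:Gauss decomposition} and Lemmas \ref{lem:Vk}, \ref{lem:Vkb}), and you only assert it; since the whole theorem rests on it, ``one shows'' here is a genuine gap, not deferred bookkeeping.

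Moreover, the representation-theoretic route you sketch does not work as stated, because the multiplicity-one claim is false. Already for $d=3$, $\C^4$ is two copies of the spin-$\tfrac12$ module of $\mathrm{Spin}(3)$, so the irreducible of spin $k+\tfrac12$ occurs with multiplicity two in $\HPk{k}(\S^{2})\otimes\C^4$ and again with multiplicity two in $\HPk{k+1}(\S^{2})\otimes\C^4$; analogous multiplicities (and, for even $d$, the two half-spin modules) appear in every dimension. With multiplicities larger than one, Schur's lemma alone does not force your $2\times2$ normal form: one must additionally choose a pairing of the multiplicity spaces across adjacent degrees compatible with the two anticommuting selfadjoint involutions, and that is exactly where the work lies. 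The paper achieves this by an elementary induction that bypasses $\mathrm{Spin}(d)$ branching and all parity case distinctions: set $\Vk{0}=\HPk{0}(\S^{d-1},\C^\dimN)$ and $\Vk{k+1}=(\Diracphim{0}\Vk{k})^{\perp}\cap\HPk{k+1}(\S^{d-1},\C^\dimN)$, use the decomposition $\xi_jY=\abs{\xi}^2Z+W$ of Proposition \ref{prop:Gauss decomposition} to get $\Diracphim{0}\Vk{k}\subset\HPk{k+1}(\S^{d-1},\C^\dimN)$ and $\Diracphim{0}\Vk{k}\obot\Vk{k+1}=\HPk{k+1}(\S^{d-1},\C^\dimN)$, then split $\Vk{k}$ into $\gammaj{d+1}$-eigenspaces $\Vkm{k}$; the resulting frames $\Eknm{k}{n}$ have exactly the three properties \eqref{eq:Funk-Hecke Ekn}, \eqref{eq:Ekn sigma_1}, \eqref{eq:Ekn sigma_3} that your Step 2 needs. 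Until you either carry out the branching-plus-multiplicity analysis uniformly in $d$ or substitute such a direct construction, the proof is incomplete at its central step.
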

	As an application of Theorem \ref{thm:Dirac}, we give some examples of $(w, \psi)$ for which we can find explicit values of $\Diracconstwpmd{w}{\psi}{m}{d}$.
	In study of Schr\"{o}dinger equations, it is classically known that the smoothing estimate holds in the following cases:
	\begin{alignat}{8}
		&d \geq 3,  \quad && s \geq 2 , \quad && ( w(r), \psi(r) ) = ( &&(1+r^2)^{-s/2}, \quad&&(1+r^2)^{1/4} &&) ,
		\label{eq:type A} \tag{A1}\\
		&d = 2,  \quad && s > 2 , \quad && ( w(r), \psi(r) ) = ( &&(1+r^2)^{-s/2}, \quad&&(1+r^2)^{1/4} &&) ,
		\label{eq:type A 2D} \tag{A2}\\
		&d \geq 2,  \quad && 1 < s < d, \quad&& ( w(r), \psi(r) ) = ( &&r^{-s}, \quad&&r^{(2-s)/2} &&) , 
		\label{eq:type B} \tag{B}\\
		&d \geq 2,  \quad&& s > 1 , \quad&& ( w(r), \psi(r) ) = ( &&(1+r^2)^{-s/2}, \quad&&r^{1/2} &&) .
		\label{eq:type C} \tag{C}
	\end{alignat}
		The cases \eqref{eq:type A} and \eqref{eq:type A 2D} are given by \citet[Theorem 2]{KY1989} and \citet[Proposition 2]{BK1992}, respectively.
	The case \eqref{eq:type B} with $d=2$, $1 < s < 2$, and $d \geq 3$, $1 < s \leq 2$ is by \citet[Theorem 1, Remarks (a)]{KY1989}.
	See \citet[Theorem 3]{Wat1991} and \citet[Theorem 1.1]{Sug1998} for the case \eqref{eq:type B} with the full range $1 < s < d$.
	The case \eqref{eq:type C} is by \citet[Theorem 1.(b)]{BK1992} ($d \geq 3$) and \citet[Theorem 1.1]{Chi2002} ($d \geq 2$).
	Furthermore, the ranges $s \geq 2$ in \eqref{eq:type A}, $s > 2$ in \eqref{eq:type A 2D}, $1 < s < d$ in \eqref{eq:type B}, and $s > 1$ in \eqref{eq:type C} are sharp; 
	see \citet[Theorem 2.1.(b), Theorem 2.2.(b)]{Wal1999} for \eqref{eq:type A} and \eqref{eq:type A 2D}, 
	\citet[Theorem 2]{Vil2001} for \eqref{eq:type B}, and \citet[Theorem 2.14.(b)]{Wal2000} for \eqref{eq:type C}.  
	
	\citet*{BS2017} and \citet*{BSS2015} determined the explicit values of the optimal constant $\Sconstwpd{w}{\psi}{d}$ and the existence of extremisers in the cases \eqref{eq:type A}, \eqref{eq:type B}, \eqref{eq:type C} by using Theorem \ref{thm:Schrodinger}, which extend results by \citet[(2), (3)]{Sim1992} and \citet[Corollary 4]{Wat1991}.
	\begin{quotetheorem}[{\cite[Theorems 1.6, 1.7]{BS2017}, \cite[Theorem 1.4]{BSS2015}}] \label{thm:Schrodinger explicit value} \phantom{a}
		\begin{description}
			\item[\rm{\cite[Theorem 1.7]{BS2017}}]
			In the case \eqref{eq:type A} with $s = 2$, we have
			\begin{equation}
				\Sconstd{d} = \begin{cases}
					\pi , & d = 3 , \\
					\pi \sup_{r>0} (1+r^2)^{1/2} \BesselI{1}(r) \BesselK{1}(r) , & d = 4 , \\
					\pi / 2 , & d \geq 5 ,
				\end{cases} 
			\end{equation}
			and $u_0 \in L^2(\R^d)$ is an extremiser if and only if $u_0 = 0$.
			We note that 
			\begin{equation}
				\sup_{r>0} (1+r^2)^{1/2} \BesselI{d/2-1}(r) \BesselK{d/2-1}(r)
				= \begin{cases}
					1 , & d = 3 , \\
					0.50239\ldots , & d = 4 , \\
					1/2 , & d \geq 5 .
				\end{cases}
			\end{equation}
			See Figure \ref{fig:graph of lambda0 type A} for the graph of $(1+r^2)^{1/2} \BesselI{d/2-1}(r) \BesselK{d/2-1}(r)$ in the case $d = 3, 4, 5$.
			\item[\rm{\cite[Theorem 1.6]{BS2017}}]
			In the case \eqref{eq:type B}, we have
			\begin{equation}
				\Sconstd{d} = \pi^{1/2} \frac{  \Gamma((s-1)/2) \Gamma((d-s)/2) }{ 2 \Gamma(s/2) \Gamma((d+s)/2-1) } , 
			\end{equation}
			and $u_0 \in L^2(\R^d)$ is an extremiser if and only if $u_0$ is radial.
			\item[\rm{\cite[Theorem 1.4]{BSS2015}}]
			Let $d \geq 3$, $w \in L^1(\positiveR) \setminus \{0\}$, and assume that $\Fwd{d}$ is non-negative.
			Then we have
			\begin{equation}
				\Sconstwpd{w}{r^{1/2}}{d} = \norm{w}_{L^1(\positiveR)} ,
			\end{equation}
			and $u_0 \in L^2(\R^d)$ is an extremiser if and only if $u_0 = 0$, except the case that $d = 3$ and $\Fwd{3}$ is compactly supported (see Remark \ref{remark:type C 3D}).
			In particular, in the case \eqref{eq:type C}, we have
			\begin{equation}
				\Sconstd{d} = \pi^{1/2} \frac{ \Gamma( (s-1)/2 ) }{ 2 \Gamma(s) } 
			\end{equation}
			for any $d \geq 3$. 
			Note that this is no longer true in the case $d = 2$. See Theorem \ref{thm:type C 2D Schrodinger} for details.
		\end{description}
	\end{quotetheorem}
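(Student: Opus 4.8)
The plan is to derive all three assertions from the abstract formula of Theorem~\ref{thm:Schrodinger}: for each prescribed pair $(w,\psi)$ I would compute the sequence $\lambdakd{k}{d}$ explicitly, determine $\sup_{k}\esssup_{r>0}\lambdakd{k}{d}(r)$, and then read off $\Sconstd{d}$ together with the Lebesgue measure of the set on which the supremum is attained, which decides the existence of extremisers via the last clause of Theorem~\ref{thm:Schrodinger}. The three cases differ in which of the two expressions \eqref{eq:lambdak 1}, \eqref{eq:lambdak 2} is convenient and in how hard the ensuing optimization is.

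For case \eqref{eq:type B} I would use \eqref{eq:lambdak 1}: since $(\psi(r))^2=r^{2-s}$ and $w(t)=t^{-s}$, the substitution $u=rt$ cancels all the $r$-dependence, so $\lambdakd{k}{d}(r)=\tfrac12(2\pi)^d\int_0^\infty u^{1-s}(\BesselJ{k+d/2-1}(u))^2\,du$ is independent of $r$; this integral converges precisely for $1<s<2k+d$, which at $k=0$ is the sharp range $1<s<d$. Evaluating it by the Weber--Schafheitlin formula expresses $\lambdakd{k}{d}$ as an explicit constant times $\Gamma(k+(d-s)/2)/\Gamma(k+(d+s)/2-1)$; since the two arguments differ by $s-1>0$, a difference of digamma values shows this ratio is strictly decreasing in $k$, so $\lambdasupd{d}=\lambdakd{0}{d}$, and the Legendre duplication formula converts this into the stated closed form for $\Sconstd{d}$. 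As $\lambdakd{0}{d}$ is constant, the extremal set is all of $(0,\infty)$, so extremisers exist; and since $\lambdakd{k}{d}<\lambdakd{0}{d}$ strictly for $k\ge1$, an extremiser can only have a nonzero $k=0$ component, i.e.\ it must be radial.

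For case \eqref{eq:type C} I would instead use the second expression \eqref{eq:lambdak 2}, with $(\psi(r))^2=r$. Here $\abs{\Pkd{k}{d}(t)}\le\Pkd{k}{d}(1)=1$ on $[-1,1]$ together with the hypothesis $\Fwd{d}\ge0$ gives $\lambdakd{k}{d}\le\lambdakd{0}{d}$, again reducing to $k=0$. The substitution $u=r\sqrt{2(1-t)}$ rewrites $\lambdakd{0}{d}(r)=\frac{\pi^{(d-1)/2}}{\Gamma((d-1)/2)}\int_0^{2r}\Fwd{d}(u)\,u^{d-2}(1-u^2/(4r^2))^{(d-3)/2}\,du$; for $d\ge3$ the last factor lies in $[0,1]$, so by nonnegativity of the integrand $\lambdakd{0}{d}(r)\le\frac{\pi^{(d-1)/2}}{\Gamma((d-1)/2)}\int_0^\infty\Fwd{d}(u)u^{d-2}\,du$, with equality attained in the limit $r\to\infty$ by monotone convergence. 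I would identify the remaining integral by Fourier duality: pairing $\Fwd{d}(\abs\cdot)=\widehat{w(\abs\cdot)}$ against the Riesz kernel $\abs{\xi}^{-1}$, whose Fourier transform is a constant multiple of $\abs{x}^{-(d-1)}$, yields $\int_0^\infty\Fwd{d}(u)u^{d-2}\,du=(2\pi)^{d-1}\norm{w}_{L^1(\positiveR)}$, hence $\Sconstwpd{w}{r^{1/2}}{d}=\norm{w}_{L^1(\positiveR)}$. The inequality is strict for every finite $r$ unless $d=3$ and $\Fwd{3}$ is supported in $[0,2r]$, which is exactly the exceptional configuration in which the supremum is attained on a half-line and a nonzero extremiser appears. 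The concrete value in \eqref{eq:type C} then follows by evaluating $\norm{(1+r^2)^{-s/2}}_{L^1(\positiveR)}$ as a Beta integral.

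For case \eqref{eq:type A} with $s=2$, where $(\psi(r))^2=(1+r^2)^{1/2}$ and $w(t)=(1+t^2)^{-1}$, I would use the classical Hankel identity $\int_0^\infty t(1+t^2)^{-1}(\BesselJ{\nu}(rt))^2\,dt=\BesselI{\nu}(r)\BesselK{\nu}(r)$ to turn \eqref{eq:lambdak 1} into $\lambdakd{k}{d}(r)=\tfrac12(2\pi)^d(1+r^2)^{1/2}\BesselI{\nu}(r)\BesselK{\nu}(r)$ with $\nu=k+d/2-1$; since $\nu\mapsto\BesselI{\nu}(r)\BesselK{\nu}(r)$ is decreasing for each fixed $r>0$, the supremum over $k$ is attained at $k=0$, giving $\Sconstd{d}=\pi\sup_{r>0}(1+r^2)^{1/2}\BesselI{d/2-1}(r)\BesselK{d/2-1}(r)$. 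The remaining one-variable optimization is the crux and splits genuinely with the dimension: for $d=3$ one uses $\BesselI{1/2}(r)\BesselK{1/2}(r)=(1-e^{-2r})/(2r)$ and shows the product is decreasing, so the supremum is the limit $1$ at $r\to0^+$; for $d\ge5$ (that is, $\nu\ge3/2$) one shows $(1+r^2)^{1/2}\BesselI{\nu}(r)\BesselK{\nu}(r)$ increases to the limit $1/2$ at $r\to\infty$, using $\BesselI{\nu}(r)\BesselK{\nu}(r)\sim1/(2r)$ and a monotonicity estimate for the product; and for $d=4$ ($\nu=1$) there is an interior maximiser, whose value $0.50239\ldots$ is computed numerically. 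In every case the maximiser is a boundary limit or a single point, so the extremal set has measure zero, and combined with strictness for $k\ge1$ this shows no nonzero extremiser exists. I expect this last step --- the monotonicity of $\BesselI{\nu}\BesselK{\nu}$ in $\nu$ and the sharp behaviour of $(1+r^2)^{1/2}\BesselI{\nu}(r)\BesselK{\nu}(r)$ for $\nu=1/2$, $\nu=1$, and $\nu\ge3/2$ separately --- to be the main obstacle, since it requires delicate monotonicity and asymptotic estimates for products of modified Bessel functions rather than the essentially algebraic manipulations that suffice for cases \eqref{eq:type B} and \eqref{eq:type C}.
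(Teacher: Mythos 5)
You should note that the paper never proves this statement itself --- it is quoted from \cite{BS2017,BSS2015} --- but your plan (Theorem \ref{thm:Schrodinger} combined with the Weber--Schafheitlin integral and the ratio test in $k$ for case \eqref{eq:type B}, the bound $\abs{\Pkd{k}{d}}\le 1$ plus the Fourier-duality limit of $\lambdakd{0}{d}(r)$ as $r\to\infty$ for case \eqref{eq:type C}, and the identity $\int_0^\infty t(1+t^2)^{-1}(\BesselJ{\nu}(rt))^2\,dt=\BesselI{\nu}(r)\BesselK{\nu}(r)$ with monotonicity of $\nu\mapsto\BesselI{\nu}(r)\BesselK{\nu}(r)$ for case \eqref{eq:type A}) is exactly the route of the cited proofs and of this paper's own proofs of the Dirac analogues (Theorems \ref{thm:type A Dirac}, \ref{thm:type B Dirac}, \ref{thm:type C Dirac}, via Lemma \ref{lem:product of modified Bessel functions}), so the approach is essentially the same and correct. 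Two small repairs: in case \eqref{eq:type C} the duality identity carries the prefactor, i.e.\ $\frac{\pi^{(d-1)/2}}{\Gamma((d-1)/2)}\int_0^\infty \Fwd{d}(u)\,u^{d-2}\,du=(2\pi)^{d-1}\norm{w}_{L^1(\positiveR)}$ rather than the bare integral equalling $(2\pi)^{d-1}\norm{w}_{L^1(\positiveR)}$, and in case \eqref{eq:type A} with $d\ge 5$ you do not need monotonicity of $(1+r^2)^{1/2}\BesselI{\nu}(r)\BesselK{\nu}(r)$ for every $\nu\ge 3/2$ --- comparing with the $\nu=3/2$ case via \eqref{item:product of modified Bessel functions IK is decreasing for mu} and \eqref{item:product of modified Bessel functions mu = 3/2}, together with the limit $r\BesselI{\nu}(r)\BesselK{\nu}(r)\to 1/2$, already yields $\sup_{r>0}(1+r^2)^{1/2}\BesselI{\nu}(r)\BesselK{\nu}(r)=1/2$ unattained.
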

	We give analogues of these results for $\Diracconstmd{m}{d}$.
	\begin{theorem} \label{thm:type A Dirac}
		In the case \eqref{eq:type A} with $s = 2$, we have
		\begin{equation}
			\Diracconstmd{m}{d}
			=
			\begin{cases}
				4\pi / 3 , & d = 3 , m = 0 , \\
				2\pi , & d = 3 , m > 0 , \\
				\pi , & d = 4 , m = 0 , \\
				\sup_{r > 0} (1+r^2)^{1/2} \mleft( \mleft( 1 + \frac{m}{ \sqrt{r^2 + m^2} } \mright) \BesselI{1}(r) \BesselK{1}(r) + \mleft( 1 - \frac{m}{ \sqrt{r^2 + m^2} } \mright) \BesselI{2}(r) \BesselK{2}(r) \mright) , & d = 4 , m > 0 , \\
				\pi , & d \geq 5 , 
			\end{cases}
		\end{equation}
		and $u_0 \in L^2(\R^d, \C^\dimN)$ is an extremiser if and only if $u_0 = 0$.
	\end{theorem}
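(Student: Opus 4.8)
The plan is to derive the claim from the abstract formula of Theorem~\ref{thm:Dirac} by computing the functions $\lambdakd{k}{d}$, hence $\Diraclambdak{k}$, explicitly for $(w(r),\psi(r))=\bigl((1+r^2)^{-1},(1+r^2)^{1/4}\bigr)$, reducing the double supremum to a one-variable supremum of an explicit combination of products of modified Bessel functions, and then evaluating it dimension by dimension. First I would substitute $w(t)=(1+t^2)^{-1}$ and $(\psi(r))^2=(1+r^2)^{1/2}$ into the Bessel representation~\eqref{eq:lambdak 1} and invoke the classical integral $\int_0^\infty t\,(\BesselJ{\nu}(rt))^2(t^2+1)^{-1}\,dt=\BesselI{\nu}(r)\BesselK{\nu}(r)$, valid for $\nu>-1$, $r>0$, obtaining $\lambdakd{k}{d}(r)=\tfrac12(2\pi)^d\,g_{k+d/2-1}(r)$ with $g_\nu(r)\coloneqq(1+r^2)^{1/2}\BesselI{\nu}(r)\BesselK{\nu}(r)$.

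Next I would use two facts about $g_\nu$: (i) for each fixed $r>0$ the map $\nu\mapsto\BesselI{\nu}(r)\BesselK{\nu}(r)$ is strictly decreasing on $[0,\infty)$ --- a standard monotonicity property of modified Bessel products, also reflected among the basic properties of $\lambdak{k}$ in Proposition~\ref{prop:properties of lambdak}; and (ii) $g_\nu(r)\le\tfrac12$ for all $r>0$ whenever $\nu\ge\tfrac32$, the bound being strict for every finite $r$ and approached only as $r\to\infty$ --- by (i) this reduces to the case $\nu=\tfrac32$, which is the dimension $5$ Schr\"{o}dinger computation of Theorem~\ref{thm:Schrodinger explicit value}. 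By (i) one has $\lambdakd{k}{d}\ge\lambdakd{k+1}{d}$, so the absolute value in~\eqref{eq:Diraclambdak} disappears and $\Diraclambdak{k}(r)=\tfrac12(2\pi)^d\bigl[(1+\tfrac{m}{\phim{m}(r)})g_{k+d/2-1}(r)+(1-\tfrac{m}{\phim{m}(r)})g_{k+d/2}(r)\bigr]$. Since both $g_{k+d/2-1}$ and $g_{k+d/2}$ are non-increasing in $k$ and the coefficients $1\pm\tfrac{m}{\phim{m}(r)}$ are non-negative, the supremum over $k\in\N$ in Theorem~\ref{thm:Dirac} is attained at $k=0$, so, using $(2\pi)^{d-1}\Diracconstmd{m}{d}=\Diraclambdasup$, I get $\Diracconstmd{m}{d}=\pi\,\esssup_{r>0}\bigl[(1+\tfrac{m}{\phim{m}(r)})g_{d/2-1}(r)+(1-\tfrac{m}{\phim{m}(r)})g_{d/2}(r)\bigr]$.

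It remains to evaluate this supremum. For $d\ge5$, both $g_{d/2-1}$ and $g_{d/2}$ are $\le\tfrac12$ by (i)--(ii), so the bracket is $\le\tfrac12(1+\tfrac{m}{\phim{m}(r)})+\tfrac12(1-\tfrac{m}{\phim{m}(r)})=1$, and letting $r\to\infty$ (where $g_\nu(r)\to\tfrac12$ and $m/\phim{m}(r)\to0$) shows $1$ is approached, giving $\Diracconstmd{m}{d}=\pi$. For $d=3$ I would use the elementary half-integer formulas, in particular $g_{1/2}(r)=(1+r^2)^{1/2}\tfrac{1-e^{-2r}}{2r}$ and the analogous rational-exponential form of $g_{3/2}$: when $m=0$ the bracket is $g_{1/2}+g_{3/2}$, bounded by $g_{1/2}(0^+)+g_{3/2}(0^+)=1+\tfrac13=\tfrac43$ and approaching $\tfrac43$ as $r\to0^+$, giving $\Diracconstmd{0}{3}=\tfrac{4\pi}{3}$; when $m>0$, using $g_{3/2}\le g_{1/2}\le1$ (the bound $g_{1/2}\le1$, strict for $r>0$, being the dimension $3$ Schr\"{o}dinger bound of Theorem~\ref{thm:Schrodinger explicit value}), the bracket is $\le(1+\tfrac{m}{\phim{m}(r)})+(1-\tfrac{m}{\phim{m}(r)})=2$ and approaches $2$ as $r\to0^+$ (where $m/\phim{m}(r)\to1$, $g_{1/2}\to1$) but is not attained, giving $\Diracconstmd{m}{3}=2\pi$. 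For $d=4$ with $m>0$ no further simplification is available and $\Diracconstmd{m}{4}=\pi\,\esssup_{r>0}(1+r^2)^{1/2}\bigl[(1+\tfrac{m}{\phim{m}(r)})\BesselI{1}(r)\BesselK{1}(r)+(1-\tfrac{m}{\phim{m}(r)})\BesselI{2}(r)\BesselK{2}(r)\bigr]$; for $d=4$ with $m=0$ the bracket is $g_1(r)+g_2(r)$, whose limits are $\tfrac34$ at $r=0^+$ and $1$ at $r=\infty$, and the needed input is the sharp pointwise inequality $g_1(r)+g_2(r)\le1$, with equality only as $r\to\infty$, giving $\Diracconstmd{0}{4}=\pi$. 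For the extremiser statements, Theorem~\ref{thm:Dirac} reduces matters to whether $\{r>0:\Diraclambdak{k}(r)=\Diraclambdasup\}$ has positive measure for some $k$; each $\Diraclambdak{k}$ is real-analytic on $(0,\infty)$ and $\Diraclambdak{0}$ is non-constant (compare its behaviour as $r\to0^+$ and $r\to\infty$, or inspect its power series at $r=0$), so that level set is negligible, and for $k\ge1$ one uses $\Diraclambdak{k}\le\Diraclambdak{0}$ together with the strict $\nu$-monotonicity; hence the only extremiser is $u_0=0$.

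I expect the main obstacle to be the two genuinely joint two-Bessel-function inequalities in the third step --- above all $g_1(r)+g_2(r)\le1$ for $d=4$, $m=0$ --- which are not implied by the termwise bounds, since $g_1$ and $g_2$ can each exceed $\tfrac12$ (near a finite $r$ and near $r=\infty$, respectively). A genuine joint argument is needed, for instance through an integral representation of $\BesselI{\nu}(r)\BesselK{\nu}(r)$ or a direct monotonicity analysis of $r\mapsto(1+r^2)^{1/2}\bigl(\BesselI{1}(r)\BesselK{1}(r)+\BesselI{2}(r)\BesselK{2}(r)\bigr)$; and the $d=3$ estimate $g_{1/2}+g_{3/2}\le\tfrac43$, although elementary once the half-integer reductions are made, still requires a careful single-variable analysis. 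Establishing the $\nu$-monotonicity (i) and the sharp bound (ii), if these are not simply quoted, would be a secondary but necessary ingredient.
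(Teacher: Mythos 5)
Your route is the same as the paper's: substitute $w(t)=(1+t^2)^{-1}$, $(\psi(r))^2=(1+r^2)^{1/2}$ into \eqref{eq:lambdak 1}, use $\int_0^\infty t(r^2+t^2)^{-1}(\BesselJ{\nu}(t))^2\,dt=\BesselI{\nu}(r)\BesselK{\nu}(r)$ to get $\lambdakd{k}{d}(r)=\tfrac12(2\pi)^d(1+r^2)^{1/2}\BesselI{k+d/2-1}(r)\BesselK{k+d/2-1}(r)$, use the strict decrease of $\mu\mapsto\BesselI{\mu}(r)\BesselK{\mu}(r)$ to place the supremum at $k=0$ and to remove the absolute value in \eqref{eq:Diraclambdak}, and dismiss extremisers by real-analyticity of $\Diraclambdak{k}$. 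Your treatment of $d\geq5$ and of $d=3$, $m>0$ coincides with the paper's (which packages the needed facts as Lemma \ref{lem:product of modified Bessel functions}), and the $d=4$, $m>0$ entry is just the surviving supremum, as in the paper.

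The genuine gap is exactly where you flag it, but it is not a side issue: the inequalities $\sup_{r>0}\bigl(g_{1/2}(r)+g_{3/2}(r)\bigr)=4/3$ and $g_1(r)+g_2(r)<1$ \emph{are} the content of the cases $(d,m)=(3,0)$ and $(4,0)$, and your proposal does not prove either. Moreover, the justification you do offer for $d=3$ (``bounded by $g_{1/2}(0^+)+g_{3/2}(0^+)$'') is invalid as a termwise bound: by \eqref{item:product of modified Bessel functions mu = 3/2}, $(1+r^2)^{1/2}\BesselI{3/2}(r)\BesselK{3/2}(r)$ is strictly \emph{increasing}, so $g_{3/2}(r)>1/3$ for every $r>0$; the paper instead substitutes the elementary half-integer expressions, reduces $\frac{d}{dr}\fd{1/2}$ to the sign of $3e^{2r}-(4r^4+4r^3+6r^2+6r+3)$, and pins that sign down by differentiating four times, concluding that $\fd{1/2}$ decreases from $4/3$ and then increases to $1$. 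For $(4,0)$, as you note, no termwise bound can work ($\sup_r g_1=0.502\ldots>1/2$ while $g_2\to1/2$), but the ``direct monotonicity analysis'' you suggest also fails naively, since $\fd{1}$ is not monotone (it dips below its limit $3/4$ at $0^+$ before climbing to $1$). The paper's actual argument splits at $r=1$: for $r\geq1$ it uses $\fd{1}<\fd{1/2}$ together with the already-established shape of $\fd{1/2}$ and a numerical check that $\fd{1/2}(1)<1$; for $0<r\leq1$ it writes $(1+r^2)(\BesselI{1}\BesselK{1}+\BesselI{2}\BesselK{2})^2\leq(\BesselI{1}\BesselK{1}+\BesselI{2}\BesselK{2})^2\big|_{r\to0}+\bigl(r(\BesselI{1}\BesselK{1}+\BesselI{2}\BesselK{2})\bigr)^2\big|_{r=1}$ using \eqref{item:product of modified Bessel functions IK is decreasing for r} and \eqref{item:product of modified Bessel functions rIK is increasing for r}, and closes with numerical bounds on $\BesselI{1}(1),\BesselK{1}(1),\BesselI{2}(1),\BesselK{2}(1)$ giving $<9/16+7/16=1$. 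Until you supply arguments of this kind (or equivalents), the values $4\pi/3$ and $\pi$ for $(3,0)$ and $(4,0)$ remain unproven in your write-up.
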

	\begin{theorem} \label{thm:type B Dirac}
		In the case \eqref{eq:type B}, we have
		\begin{align}
			\Diracconstmd{m}{d}
			&= \begin{cases}
				\dfrac{2(d - 1)}{ d + s - 2 }  \pi^{1/2} \dfrac{  \Gamma((s-1)/2) \Gamma((d-s)/2) }{ \Gamma(s/2) \Gamma((d+s)/2-1) } , & m = 0 , \\
				\pi^{1/2} \dfrac{  \Gamma((s-1)/2) \Gamma((d-s)/2) }{ \Gamma(s/2) \Gamma((d+s)/2-1) }, & m > 0 , \\
			\end{cases}
		\end{align}
		and $u_0 \in L^2(\R^d, \C^\dimN)$ is an extremiser if and only if
		\begin{equation}
		\begin{dcases}
		u_0 = 0 , & m > 0 , \\
		\widehat{u_0}(\xi) = f(\xi) + \Diracphim{0}(\xi / \abs{\xi}) g(\xi) \text{ for some radial $f, g \in L^2(\R^d, \C^\dimN)$} , & m = 0 .
		\end{dcases}
		\end{equation}
		Recall that $\Diracphim{0}$ denotes the symbol of the Dirac operator $\Diracopm{0}$.
	\end{theorem}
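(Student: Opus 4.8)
Here is how I would approach the proof of Theorem~\ref{thm:type B Dirac}.

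The whole argument is an application of Theorem~\ref{thm:Dirac intro} (equivalently Theorem~\ref{thm:Dirac}), so everything reduces to evaluating $\Diraclambdasup = \sup_{k \in \N} \esssup_{r > 0} \Diraclambdak{k}(r)$ for the weight pair \eqref{eq:type B} and then reading off the extremisers from the ``if and only if'' clause. First I would record the shape of $\lambdakd{k}{d}$ in this scale-invariant case: substituting $w(t) = t^{-s}$, $\psi(r) = r^{(2-s)/2}$ into \eqref{eq:lambdak 1} and rescaling $u = rt$ kills all $r$-dependence and leaves a Weber--Schafheitlin integral,
\[
	\lambdakd{k}{d}(r) = \frac{(2\pi)^d}{2} \int_0^\infty u^{1-s} \bigl( \BesselJ{k + d/2 - 1}(u) \bigr)^2 \, du = \frac{(2\pi)^d}{2} \cdot \frac{\Gamma(s-1)}{2^{s-1}\,\Gamma(s/2)^2} \cdot \frac{\Gamma\bigl(k + \tfrac{d-s}{2}\bigr)}{\Gamma\bigl(k + \tfrac{d+s}{2} - 1\bigr)} ,
\]
which converges precisely because $1 < s < d$. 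Three consequences: $\lambdakd{k}{d}$ is independent of $r$; the ratio $\lambdakd{k+1}{d}/\lambdakd{k}{d} = \bigl(k + \tfrac{d-s}{2}\bigr)/\bigl(k + \tfrac{d+s}{2} - 1\bigr)$ lies in $(0,1)$ since $s > 1$, so $k \mapsto \lambdakd{k}{d}$ is strictly decreasing; and, combining Theorem~\ref{thm:Schrodinger} with Theorem~\ref{thm:Schrodinger explicit value}, $\lambdakd{0}{d} = (2\pi)^{d-1} \Sconstd{d}$ while $\lambdakd{1}{d} = \tfrac{d-s}{d+s-2}\,\lambdakd{0}{d}$ (the duplication formula for $\Gamma$ is what ties this to the closed form of $\Sconstd{d}$).

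Next I substitute into \eqref{eq:Diraclambdak}. Since $\lambdakd{k}{d} > \lambdakd{k+1}{d}$ the absolute value opens up, giving $\Diraclambdak{k}(r) = \lambdakd{k}{d} + \lambdakd{k+1}{d} + \tfrac{m}{\phim{m}(r)}\bigl(\lambdakd{k}{d} - \lambdakd{k+1}{d}\bigr)$. If $m = 0$ this is the $r$-independent quantity $\lambdakd{k}{d} + \lambdakd{k+1}{d}$, which decreases in $k$, so $\Diraclambdasup = \lambdakd{0}{d} + \lambdakd{1}{d}$; dividing by $(2\pi)^{d-1}$ and inserting $\lambdakd{1}{d}/\lambdakd{0}{d} = \tfrac{d-s}{d+s-2}$ together with the value of $\Sconstd{d}$ gives the stated $m = 0$ formula. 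If $m > 0$, then $r \mapsto m/\phim{m}(r)$ is continuous and strictly decreasing from $1$ (as $r \to 0^+$) to $0$, so $\esssup_{r>0} \Diraclambdak{k}(r) = 2\lambdakd{k}{d}$ — a supremum attained only in the limit $r \to 0^+$, hence on no subset of $(0,\infty)$ of positive measure — and maximising over $k$ gives $\Diraclambdasup = 2\lambdakd{0}{d}$, i.e.\ $\Diracconstmd{m}{d} = 2\Sconstd{d}$. In particular this shows the upper bound $\Diracconstmd{m}{d} \le 2\Sconstd{d}$ from \eqref{eq:equivalence of Dirac and Schrodinger} is attained here.

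For the extremisers I would use the last clause of Theorem~\ref{thm:Dirac intro}: check whether $\set{r > 0}{\Diraclambdak{k}(r) = \Diraclambdasup}$ has positive measure for some $k$. When $m > 0$: for $k = 0$ the equation $\Diraclambdak{0}(r) = 2\lambdakd{0}{d}$ forces $m/\phim{m}(r) = 1$, i.e.\ $r = 0 \notin (0,\infty)$, and for $k \ge 1$ one has $\Diraclambdak{k}(r) \le 2\lambdakd{k}{d} < 2\lambdakd{0}{d} = \Diraclambdasup$; all these sets are empty, so $u_0 = 0$ is the only extremiser. When $m = 0$: $\Diraclambdak{0} \equiv \Diraclambdasup$ on all of $(0,\infty)$ while $\Diraclambdak{k} < \Diraclambdasup$ for $k \ge 1$, so extremisers do exist and the extremiser space is exactly the ``$k = 0$ block'' of the modified spherical harmonics decomposition used to prove Theorem~\ref{thm:Dirac}. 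Unwinding that block — it is spanned by the $\C^\dimN$-valued functions with radial Fourier transform together with those whose Fourier transform is $\Diracphim{0}(\xi/\abs{\xi})$ applied to a radial function, $\Diracphim{0}$ being the massless Dirac symbol and the intertwiner between the degree-$0$ and degree-$1$ angular components — gives the stated description $\widehat{u_0}(\xi) = f(\xi) + \Diracphim{0}(\xi/\abs{\xi}) g(\xi)$ with $f, g$ radial.

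The step I expect to be the main obstacle is the $m = 0$ extremiser description. The computations above are routine (a Weber--Schafheitlin evaluation, the $\Gamma$-duplication manipulation relating $\lambdakd{0}{d}$ to $\Sconstd{d}$, and monotonicity in $k$), but identifying the ``$k = 0$ block'' of the modified decomposition as precisely the subspace $\{ \, h \in L^2(\R^d,\C^\dimN) : \widehat{h}(\xi) = f(\xi) + \Diracphim{0}(\xi/\abs{\xi}) g(\xi),\ f,g \text{ radial} \, \}$, and checking that $h \mapsto \Diracphim{0}(\xi/\abs{\xi}) h$ is the map relating its degree-$0$ and degree-$1$ angular parts, requires tracing through the construction behind Theorem~\ref{thm:Dirac} rather than invoking its abstract statement.
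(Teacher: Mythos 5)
Your proposal is correct and follows essentially the same route as the paper: the same Weber--Schafheitlin evaluation showing that $\lambdakd{k}{d}$ is an $r$-independent, strictly decreasing sequence $c_k$, the same case split giving $\Diraclambdasup = c_0 + c_1$ for $m=0$ and $2c_0$ (approached only as $r \downarrow 0$, hence on a null set) for $m>0$, and the same reading of the extremiser criteria from Theorem \ref{thm:Dirac}. The one step you flag as delicate --- unwinding the $k=0$ block of Theorem \ref{thm:Ekn} into the description $\widehat{u_0}(\xi) = f(\xi) + \Diracphim{0}(\xi/\abs{\xi}) g(\xi)$ with $f, g$ radial --- is likewise left implicit in the paper's proof, so there is no substantive difference in approach.
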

	\begin{theorem} \label{thm:type C Dirac}
		Let $d \geq 3$, $w \in L^1(\positiveR)\setminus \{0\}$, and assume that $\Fwd{d}$ is non-negative.
		Then we have
		\begin{equation}
			\Diracconstwpmd{w}{r^{1/2}}{m}{d}
			= 2 \norm{w}_{L^1(\positiveR)} ,
		\end{equation}	
		and $u_0 \in L^2(\R^d, \C^\dimN)$ is an extremiser if and only if $u_0 = 0$.
		In particular, in the case \eqref{eq:type C}, we have
		\begin{equation}
			\Diracconstwpmd{ (1+r^2)^{-s/2} }{ r^{1/2} }{m}{d}
			= \pi^{1/2} \frac{ \Gamma( (s-1)/2 ) }{ \Gamma(s) } 
		\end{equation}
		for any $d \geq 3$.
	\end{theorem}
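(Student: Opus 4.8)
The plan is to invoke Theorem~\ref{thm:Dirac} with the smoothing function $\psi(r)=r^{1/2}$, so that $r^{d-2}(\psi(r))^{2}=r^{d-1}$, and to identify $\Diraclambdasup=\sup_{k\in\N}\esssup_{r>0}\Diraclambdak{k}(r)$ for $\Diraclambdak{k}(r)=\lambdakd{k}{d}(r)+\lambdakd{k+1}{d}(r)+\frac{m}{\phim{m}(r)}\abs{\lambdakd{k}{d}(r)-\lambdakd{k+1}{d}(r)}$. The backbone is a \emph{monotonicity in $k$}: from the representation \eqref{eq:lambdak 2}, the hypotheses $d\ge3$ (so $(1-t^{2})^{(d-3)/2}\ge0$) and $\Fwd{d}\ge0$, and the classical bound $\abs{\Pkd{k}{d}(t)}\le\Pkd{k}{d}(1)=1$ on $[-1,1]$, one writes
\[
\lambdakd{0}{d}(r)-\lambdakd{k}{d}(r)=\frac{\pi^{(d-1)/2}}{\Gamma((d-1)/2)}\,r^{d-1}\int_{-1}^{1}\Fwd{d}\!\bigl(r\sqrt{2(1-t)}\bigr)\bigl(1-\Pkd{k}{d}(t)\bigr)(1-t^{2})^{(d-3)/2}\,dt\ \ge\ 0 ,
\]
and, since $1-\Pkd{k}{d}(t)>0$ for a.e.\ $t\in(-1,1)$ when $k\ge1$, this inequality is strict at every $r$ for which $\Fwd{d}$ does not vanish a.e.\ on $(0,2r)$.

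For the upper bound I would combine $\tfrac{m}{\phim{m}(r)}\le1$ with $a+b+\abs{a-b}=2\max(a,b)$ to get $\Diraclambdak{k}(r)\le2\max\bigl(\lambdakd{k}{d}(r),\lambdakd{k+1}{d}(r)\bigr)\le2\lambdakd{0}{d}(r)$ for all $k,r$. By Theorem~\ref{thm:Schrodinger} together with the monotonicity in $k$, $\esssup_{r>0}\lambdakd{0}{d}(r)=(2\pi)^{d-1}\Sconstwpd{w}{r^{1/2}}{d}$, and by Theorem~\ref{thm:Schrodinger explicit value} this equals $(2\pi)^{d-1}\norm{w}_{L^{1}(\positiveR)}$. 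Hence $(2\pi)^{d-1}\Diracconstwpmd{w}{r^{1/2}}{m}{d}=\Diraclambdasup\le2(2\pi)^{d-1}\norm{w}_{L^{1}(\positiveR)}$.

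The matching lower bound comes from letting $r\to\infty$. Substituting $\rho=r\sqrt{2(1-t)}$ in \eqref{eq:lambdak 2} presents $\lambdakd{k}{d}(r)$ as a fixed constant times $\int_{0}^{\infty}\Fwd{d}(\rho)\rho^{d-2}\,\Pkd{k}{d}\!\bigl(1-\tfrac{\rho^{2}}{2r^{2}}\bigr)\bigl(4-\tfrac{\rho^{2}}{r^{2}}\bigr)^{(d-3)/2}\indicator{(0,2r)}(\rho)\,d\rho$; for $d\ge3$ the integrand is dominated by $4^{(d-3)/2}\Fwd{d}(\rho)\rho^{d-2}$, which is integrable since for $k=0$ the integrand increases with $r$ and its integral stays bounded (being proportional to $\lambdakd{0}{d}(r)\le(2\pi)^{d-1}\norm{w}_{L^{1}(\positiveR)}$). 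Dominated convergence together with $\Pkd{k}{d}(1-\tfrac{\rho^{2}}{2r^{2}})\to\Pkd{k}{d}(1)=1$ then shows $\lim_{r\to\infty}\lambdakd{k}{d}(r)$ exists, is independent of $k$, and equals $\sup_{r>0}\lambdakd{0}{d}(r)=(2\pi)^{d-1}\norm{w}_{L^{1}(\positiveR)}$ (this is the value just recorded; alternatively one recognises the limit as $\tfrac12\int_{\R^{d-1}}\Fwd{d}(\abs{\eta})\,d\eta$ and uses the Fourier-slicing identity $\int_{\R^{d-1}}\Fwd{d}(\abs{\eta})\,d\eta=2(2\pi)^{d-1}\norm{w}_{L^{1}(\positiveR)}$). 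Since $\tfrac{m}{\phim{m}(r)}\to0$, it follows that $\Diraclambdak{0}(r)=\bigl(1+\tfrac{m}{\phim{m}(r)}\bigr)\lambdakd{0}{d}(r)+\bigl(1-\tfrac{m}{\phim{m}(r)}\bigr)\lambdakd{1}{d}(r)\to2(2\pi)^{d-1}\norm{w}_{L^{1}(\positiveR)}$, so $\esssup_{r>0}\Diraclambdak{0}(r)\ge2(2\pi)^{d-1}\norm{w}_{L^{1}(\positiveR)}$, and with the upper bound this yields $\Diracconstwpmd{w}{r^{1/2}}{m}{d}=2\norm{w}_{L^{1}(\positiveR)}$. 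The delicate point of the argument is precisely here: the Dirac supremum $2(2\pi)^{d-1}\norm{w}_{L^{1}(\positiveR)}$ is only approached as $r\to\infty$ and never attained at a finite $r$, even in the exceptional case $d=3$ with $\Fwd{3}$ compactly supported where $\lambdakd{0}{d}$ does attain its supremum on a set of positive measure — because attaining it forces $\Fwd{d}$ to have positive-measure support, which (by the strict form of the monotonicity in $k$) makes $\lambdakd{1}{d}$ strictly smaller there, so $\lambdakd{0}{d}$ and $\lambdakd{1}{d}$ can never simultaneously equal their common supremum.

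For the extremisers, by Theorem~\ref{thm:Dirac} it suffices to show that $\set{r>0}{\Diraclambdak{k}(r)=\Diraclambdasup}$ is empty for every $k$ (note $w\ne0$, so $\Diraclambdasup>0$). If $k\ge1$: when $\lambdakd{0}{d}(r)<(2\pi)^{d-1}\norm{w}_{L^{1}(\positiveR)}$ one has $\Diraclambdak{k}(r)\le2\lambdakd{0}{d}(r)<\Diraclambdasup$, and when $\lambdakd{0}{d}(r)$ attains its supremum then $\Fwd{d}$ has positive-measure support in $(0,2r)$, forcing $\lambdakd{k}{d}(r),\lambdakd{k+1}{d}(r)<(2\pi)^{d-1}\norm{w}_{L^{1}(\positiveR)}$ strictly and again $\Diraclambdak{k}(r)<\Diraclambdasup$. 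If $k=0$: equality $\Diraclambdak{0}(r)=2(2\pi)^{d-1}\norm{w}_{L^{1}(\positiveR)}$ would require both $\lambdakd{0}{d}(r)=(2\pi)^{d-1}\norm{w}_{L^{1}(\positiveR)}$ and either $\lambdakd{1}{d}(r)=(2\pi)^{d-1}\norm{w}_{L^{1}(\positiveR)}$ or $1-\tfrac{m}{\phim{m}(r)}=0$; the last holds only at $r=0$ and the first two cannot hold together, so no non-zero extremiser exists, for $m=0$ as well as $m>0$. Finally, for \eqref{eq:type C} the weight $w(r)=(1+r^{2})^{-s/2}$ (which lies in $L^{1}(\positiveR)$ since $s>1$) satisfies $\Fwd{d}\ge0$, this being exactly the hypothesis under which the case \eqref{eq:type C} of Theorem~\ref{thm:Schrodinger explicit value} is stated; applying the result just proved and evaluating $\norm{(1+r^{2})^{-s/2}}_{L^{1}(\positiveR)}=\int_{0}^{\infty}(1+t^{2})^{-s/2}\,dt$ as a Beta integral then gives the asserted closed form.
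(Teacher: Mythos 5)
Your proposal is correct and follows essentially the same route as the paper: both apply Theorem \ref{thm:Dirac intro} (through Theorem \ref{thm:Dirac}), use the non-negativity of $\Fwd{d}$ together with $0<\lambdakd{k}{d}(r)<\infty$ to get the strict pointwise bound $\Diraclambdak{k}(r) < 2\lambdakd{0}{d}(r)$, identify $\Diraclambdasup = 2(2\pi)^{d-1}\norm{w}_{L^1(\positiveR)}$ as a value approached only in the limit $r\to\infty$, and conclude from the strict bound that every level set $\set{r>0}{\Diraclambdak{k}(r) = \Diraclambdasup}$ is empty, so $u_0=0$ is the only extremiser. The differences are cosmetic: you quote Theorem \ref{thm:Schrodinger explicit value} for $\esssup_{r>0}\lambdakd{0}{d}(r) = (2\pi)^{d-1}\norm{w}_{L^1(\positiveR)}$ and use dominated convergence for each $\lambdakd{k}{d}$, whereas the paper carries out the change of variables explicitly, uses the monotonicity in $r$ of $\lambdakd{0}{d}$ and of $\lambdakd{0}{d}+\lambdakd{1}{d}$ (which is where $d\ge 3$ enters), and evaluates $\int_0^\infty \Fwd{d}(t)\, t^{d-2}\, dt$ by the same Fourier identity you invoke as an alternative.
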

	Furthermore, actually Theorem \ref{thm:type C Dirac} is still valid even if $d=2$ unlike the case of Schr\"{o}dinger equations, which is a little surprising.
	\begin{theorem} \label{thm:type C 2D Dirac}
		Let $d = 2$, $m=0$, $w \in L^1(\positiveR) \setminus \{0\}$, and assume that $\Fwd{2}$ is non-negative.
			Then we have
	\begin{equation}
		\Diracconstwpmd{w}{r^{1/2}}{0}{2}
		= 2 \norm{w}_{L^1(\positiveR)} ,
	\end{equation}	
	and $u_0 \in L^2(\R^2, \C^2)$ is an extremiser if and only if $u_0 = 0$.
	\end{theorem}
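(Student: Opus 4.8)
The plan is to apply the abstract Theorem~\ref{thm:Dirac intro} with $d=2$, $m=0$, $\psi(r)=r^{1/2}$, reduce the functions $\Diraclambdak{k}$ to a single explicit oscillatory integral on the Fourier side, and then read off both the optimal constant and the non-existence of non-zero extremisers from it. Since $m=0$, the last term of \eqref{eq:Diraclambdak} drops out and $\Diraclambdak{k}(r)=\lambdakd{k}{2}(r)+\lambdakd{k+1}{2}(r)$. Starting from the second expression \eqref{eq:lambdak 2} and specialising $d=2$ (so the prefactor $\pi^{(d-1)/2}/\Gamma((d-1)/2)$ and $r^{d-2}$ are both $1$, $(\psi(r))^2=r$, $(1-t^2)^{(d-3)/2}=(1-t^2)^{-1/2}$, and $\Pkd{k}{2}$ is the Chebyshev polynomial with $\Pkd{k}{2}(\cos\theta)=\cos k\theta$), the substitution $t=\cos\theta$ together with $\sqrt{2(1-\cos\theta)}=2\sin(\theta/2)$ turns \eqref{eq:lambdak 2} into
\[
\lambdakd{k}{2}(r)=r\int_0^{\pi}\Fwd{2}\bigl(2r\sin(\theta/2)\bigr)\cos k\theta\,d\theta .
\]
Adding the $k$ and $k+1$ terms and using $\cos k\theta+\cos(k+1)\theta=2\cos\bigl(\tfrac{2k+1}{2}\theta\bigr)\cos(\theta/2)$, then substituting $s=2r\sin(\theta/2)$ --- so that $r\cos(\theta/2)\,d\theta=ds$, which is the crux, since the spurious factor $\cos(\theta/2)$ is absorbed exactly into the Jacobian --- I obtain
\[
\Diraclambdak{k}(r)=2\int_0^{2r}\Fwd{2}(s)\,\cos\!\Bigl((2k+1)\arcsin\tfrac{s}{2r}\Bigr)\,ds .
\]

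The upper bound is then essentially free: since $\Fwd{2}\ge 0$ by hypothesis and $\cos\le 1$, we get $\Diraclambdak{k}(r)\le 2\int_0^{\infty}\Fwd{2}(s)\,ds$ for all $k\in\N$ and $r>0$. To evaluate the right-hand side I would use the radial formula $\Fwd{2}(s)=2\pi\int_0^{\infty}w(t)\,t\,\BesselJ{0}(st)\,dt$, interchange the order of integration, and apply $\int_0^{\infty}\BesselJ{0}(u)\,du=1$ to get $\int_0^{\infty}\Fwd{2}(s)\,ds=2\pi\norm{w}_{L^1(\positiveR)}$, whence $\Diraclambdak{k}(r)\le 4\pi\norm{w}_{L^1(\positiveR)}$. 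For the reverse inequality I would let $r\to\infty$ in the last display with $k=0$: the integrand converges pointwise to $\Fwd{2}(s)$ and is dominated by $2\Fwd{2}\in L^1(\positiveR)$, so dominated convergence gives $\Diraclambdak{0}(r)\to 4\pi\norm{w}_{L^1(\positiveR)}$. Hence $\Diraclambdasup=4\pi\norm{w}_{L^1(\positiveR)}$, and Theorem~\ref{thm:Dirac intro} yields $(2\pi)\,\Diracconstwpmd{w}{r^{1/2}}{0}{2}=4\pi\norm{w}_{L^1(\positiveR)}$, i.e.\ the claimed value $2\norm{w}_{L^1(\positiveR)}$.

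For the extremiser assertion I would show the supremum is attained for no pair $(k,r)$, so that $\set{r>0}{\Diraclambdak{k}(r)=\Diraclambdasup}$ is empty for every $k$. If $\Diraclambdak{k}(r)=4\pi\norm{w}_{L^1(\positiveR)}$, both inequalities above must be equalities, forcing $\Fwd{2}=0$ a.e.\ on $(2r,\infty)$ and $\Fwd{2}(s)=0$ for a.e.\ $s\in(0,2r)$ with $\cos\bigl((2k+1)\arcsin(s/2r)\bigr)<1$. But for $s\in(0,2r)$ the quantity $(2k+1)\arcsin(s/2r)$ ranges over the bounded interval $\bigl(0,\tfrac{2k+1}{2}\pi\bigr)$ and so meets $2\pi\Z$ at finitely many points only; hence those exceptional $s$ fill $(0,2r)$ up to measure zero, $\Fwd{2}=0$ a.e.\ on $(0,\infty)$, and $w=0$ by injectivity of the Fourier transform --- contradicting $w\neq 0$. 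Therefore Theorem~\ref{thm:Dirac intro} gives that $u_0\in L^2(\R^2,\C^2)$ is an extremiser if and only if $u_0=0$.

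I expect the only real difficulty to be bookkeeping: carrying out the passage from \eqref{eq:lambdak 2} to the Chebyshev integral cleanly, and justifying the Fubini interchange and dominated-convergence steps under the standing ``sufficiently nice $w$'' hypotheses. The conceptual heart --- and the reason the two-dimensional Dirac case behaves better than its Schr\"odinger analogue (cf.\ Theorem~\ref{thm:type C 2D Schrodinger}) --- is the Jacobian cancellation noted above: for $\lambdakd{k}{2}$ by itself the substitution $s=2r\sin(\theta/2)$ leaves an extra factor $(1-s^2/4r^2)^{-1/2}$ which blows up as $s\uparrow 2r$ and destroys the uniform bound, whereas the Dirac combination $\lambdakd{k}{2}+\lambdakd{k+1}{2}$ generates exactly the compensating factor $\cos(\theta/2)$.
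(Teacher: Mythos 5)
Your proposal is correct and follows essentially the same route as the paper: reduce to Theorem \ref{thm:Dirac intro}, use the Chebyshev identity $\Pkd{k}{2}(\cos\theta)=\cos k\theta$ so that the combination $\lambdakd{k}{2}+\lambdakd{k+1}{2}$ produces the factor $\cos((2k+1)\theta/2)\cos(\theta/2)$ whose second factor cancels the Jacobian, bound by $2\int_0^\infty \Fwd{2} = 4\pi\norm{w}_{L^1(\positiveR)}$, identify the supremum via the $r\to\infty$ limit for $k=0$, and rule out extremisers by the strictness of $\cos((2k+1)\arcsin(s/2r))<1$ off a finite set together with $w\neq 0$. The only cosmetic differences are that the paper obtains the supremum from the monotonicity of $\lambdak{0}+\lambdak{1}$ established in the proof of Theorem \ref{thm:type C Dirac} rather than by dominated convergence, and computes $\int_0^\infty\Fwd{2}$ by a $d$-dimensional Fourier identity rather than via $\int_0^\infty \BesselJ{0}=1$.
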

	As a consequence of \eqref{eq:equivalence of Dirac and Schrodinger} and Theorem \ref{thm:type C 2D Dirac}, we get
\begin{equation}
	\norm{w}_{L^1(\positiveR)} \leq
	\Sconstwpd{ w }{ r^{1/2} }{2} 
	\leq 2 \norm{w}_{L^1(\positiveR)} .
\end{equation}
Moreover, using Theorem \ref{thm:type C 2D Dirac}, we can show that the following hold in the case \eqref{eq:type A 2D}:
	\begin{align}
	\lim_{s \downarrow 2} (s - 2) \Sconstwpd{ (1+r^2)^{-s/2} }{ (1+r^2)^{1/4} }{2} &= \pi , \\
	\lim_{s \downarrow 2} (s - 2) \Diracconstwpmd{ (1+r^2)^{-s/2} }{ (1+r^2)^{1/4} }{m}{2} &= 
	\begin{cases}
		\pi , & m = 0 , \\
		2\pi , & m > 0 .
	\end{cases}
\end{align}
See Section \ref{section:remarks on 2D Schrodinger} for details on these results.
\begin{remark}
	As you can see, we have $\Diracconstmd{m}{d} = 2 \Sconstd{d}$ in many cases, which means that the right inequality in \eqref{eq:equivalence of Dirac and Schrodinger} is sharp. 
	On the other hand, in the case \eqref{eq:type B} with $m = 0$, we have
	\begin{equation}
		\lim_{s \uparrow d} \frac{ \Diracconstwpmd{r^{-s}}{r^{(2-s)/2}}{0}{d} }{ \Sconstwpd{r^{-s}}{r^{(2-s)/2}}{d} }
		= \lim_{s \uparrow d} \dfrac{2(d - 1)}{ d + s - 2 }
		= 1 ,
	\end{equation}
	which shows that the left inequality in \eqref{eq:equivalence of Dirac and Schrodinger} is also sharp. 
\end{remark}
\subsection*{Organization of the paper}
In Section \ref{section:preliminaries}, we introduce some notation and basic facts on spherical harmonics.
In Section \ref{section:invariant subspaces}, we construct a certain orthonormal basis of $L^2(\S^{d-1}, \C^\dimN)$ (Theorem \ref{thm:Ekn}), which is a modified version of the usual spherical harmonics decomposition of $L^2(\S^{d-1})$.
In Section \ref{section:proof of thm:Dirac}, we prove Theorem \ref{thm:Dirac intro} using Theorem \ref{thm:Ekn}.
In Section \ref{section:explicit values}, we prove Theorems \ref{thm:type A Dirac}, \ref{thm:type B Dirac}, \ref{thm:type C Dirac}, and \ref{thm:type C 2D Dirac} using Theorem \ref{thm:Dirac intro}.
In Section \ref{section:remarks on 2D Schrodinger}, we discuss the two-dimensional case. 
	\section{Preliminaries} \label{section:preliminaries}
	In order to study the optimal constant of the smoothing estimate for the Dirac equation, 
	we introduce a linear operator $\DiracS \colon L^2(\R^d, \C^\dimN) \to L^2(\R^{d+1}, \C^\dimN)$ defined by 
	\begin{equation}
		(\DiracS f)(x,t) \coloneqq w(\abs{x})^{1/2} \int_{\xi \in \R^d} e^{ix \cdot \xi} \phim{m}(\abs{\xi})^{-1/2} \psi(\abs{\xi}) e^{-it \Diracphim{m}(\xi) } f(\xi)\, d\xi ,
	\end{equation}
	and so that
	\begin{equation}
		\norm{\DiracS}_{L^2(\R^d, \C^\dimN) \to L^2(\R^{d+1}, \C^\dimN)}^2 = (2 \pi)^d \Diracconstwpmd{w}{\psi}{m}{d} .
	\end{equation}
	
	In order to compute $\norm{\DiracS f}_{L^2(\R^{d+1}, \C^\dimN)}$, we will use spherical harmonic polynomials. 
	For each $k \in \N$, let $\HPk{k}(\S^{d-1})$ and $\{ \pkn{k}{n} \}_{n}$ be the space of spherical harmonic polynomials of degree $k$ on $\S^{d-1}$ and its orthonormal basis with respect to the inner product of $L^2(\S^{d-1})$, respectively.
	The spherical harmonics decomposition and the Funk--Hecke theorem are as follows:
	\begin{quotetheorem}[spherical harmonics decomposition of $L^2(\R^d)$] \label{thm:spherical harmonics decomposition}
		For any $f \in L^2(\R^d)$, there uniquely exists $\{ \fkn{k}{n} \}_{ k, n } \subset L^2(\positiveR)$ satisfying
		\begin{gather}
			f(\xi) = \abs{\xi}^{-(d-1)/2} \sumk \sum_{n} \pkn{k}{n}(\xi / \abs{\xi}) \fkn{k}{n}(\abs{\xi}) , \label{eq:spherical harmonics decomposition} \\
			\norm{f}_{L^2(\R^d)}^2 = \sum_{k=0}^{\infty} \sum_{n}  \norm{\fkn{k}{n}}_{L^2(\positiveR)}^2 . \label{eq:spherical harmonics decomposition norm}
		\end{gather}
		Conversely, for any $\{ \fkn{k}{n} \} \subset L^2(\positiveR)$ satisfying
		\begin{equation}
			\sum_{k=0}^{\infty} \sum_{n}  \norm{\fkn{k}{n}}_{L^2(\positiveR)}^2 < \infty ,
		\end{equation}
		the function $f$ given by \eqref{eq:spherical harmonics decomposition} is in $L^2(\R^d)$ and satisfies \eqref{eq:spherical harmonics decomposition norm}.
	\end{quotetheorem}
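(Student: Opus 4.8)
The plan is to reduce the statement to two classical facts: the identification of $L^2(\R^d)$ with $L^2((0,\infty), r^{d-1}\,dr) \otimes L^2(\S^{d-1})$ via polar coordinates, and the completeness of the spherical harmonics $\bigcup_{k} \{ \pkn{k}{n} \}_n$ as an orthonormal system in $L^2(\S^{d-1})$. Concretely, for $f \in L^2(\R^d)$ I would write $\xi = r\omega$ with $r = \abs{\xi}$ and $\omega \in \S^{d-1}$, and note that by Fubini the slice $\omega \mapsto f(r\omega)$ lies in $L^2(\S^{d-1})$ for a.e.\ $r > 0$, with $\norm{f}_{L^2(\R^d)}^2 = \int_0^\infty \norm{ f(r\,\cdot\,) }_{L^2(\S^{d-1})}^2 \, r^{d-1}\,dr$.

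Next I would set $c_{k}^{n}(r) \coloneqq \int_{\S^{d-1}} f(r\omega) \overline{\pkn{k}{n}(\omega)} \, d\sigma(\omega)$, which is measurable in $r$ by Fubini, and invoke Parseval on the sphere: for a.e.\ $r$, simultaneously for all $(k,n)$ — a countable intersection of full-measure sets — one has $\norm{f(r\,\cdot\,)}_{L^2(\S^{d-1})}^2 = \sum_{k,n} \abs{c_k^n(r)}^2$ and $f(r\,\cdot\,) = \sum_{k,n} c_k^n(r) \pkn{k}{n}$ in $L^2(\S^{d-1})$. Putting $\fkn{k}{n}(r) \coloneqq r^{(d-1)/2} c_k^n(r)$, integrating the Parseval identity against $r^{d-1}\,dr$, and swapping $\sum_{k,n}$ with $\int_0^\infty$ by Tonelli (all terms non-negative) gives $\norm{f}_{L^2(\R^d)}^2 = \sum_{k,n} \norm{\fkn{k}{n}}_{L^2(\positiveR)}^2$, so each $\fkn{k}{n}\in L^2(\positiveR)$; applying the same computation to $f$ minus a finite partial sum shows that the series in \eqref{eq:spherical harmonics decomposition} converges to $f$ in $L^2(\R^d)$. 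Uniqueness is automatic: orthonormality of the $\pkn{k}{n}$ forces $c_k^n(r)$ to equal the displayed integral for a.e.\ $r$.

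For the converse I would run the same polar-coordinate/orthogonality computation in reverse. Given $\{\fkn{k}{n}\} \subset L^2(\positiveR)$ with $\sum_{k,n}\norm{\fkn{k}{n}}_{L^2(\positiveR)}^2 < \infty$, the $L^2(\R^d)$-norm of any finite block $\sum_{(k,n)\in F} \abs{\xi}^{-(d-1)/2} \pkn{k}{n}(\xi/\abs{\xi}) \fkn{k}{n}(\abs{\xi})$ equals $\bigl( \sum_{(k,n)\in F} \norm{\fkn{k}{n}}_{L^2(\positiveR)}^2 \bigr)^{1/2}$, so the partial sums are Cauchy in $L^2(\R^d)$; their limit $f$ satisfies \eqref{eq:spherical harmonics decomposition}, and \eqref{eq:spherical harmonics decomposition norm} follows by passing to the limit in the finite-sum identity.

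The main obstacle — really the only place care is needed — is the measure-theoretic bookkeeping in the forward direction: checking that $r \mapsto c_k^n(r)$ is measurable, that the sphere-Parseval identities hold on a common full-measure set of $r$ across all countably many pairs $(k,n)$, and that summation and integration may be interchanged. Each of these is routine once one observes that every quantity involved is non-negative, so Tonelli applies without any domination hypothesis; everything else is the standard tensor-product Hilbert space argument.
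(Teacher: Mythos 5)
Your argument is correct, but note that the paper does not prove this statement at all: it is quoted as a classical fact (Theorem \ref{thm:spherical harmonics decomposition}) with a pointer to Atkinson--Han \cite[Theorems 2.22, 2.38]{AH2012}, so there is no in-paper proof to compare against. What you wrote is essentially the standard textbook argument behind that citation: identify $L^2(\R^d)$ with $L^2(\positiveR, r^{d-1}\,dr)\otimes L^2(\S^{d-1})$ via polar coordinates, expand each slice $f(r\,\cdot)$ in the complete orthonormal system $\{\pkn{k}{n}\}$ of $L^2(\S^{d-1})$, absorb the Jacobian by setting $\fkn{k}{n}(r) = r^{(d-1)/2}\,c_k^n(r)$, and use Tonelli plus orthogonality of the blocks for both directions. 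Two small remarks: the ``countable intersection of full-measure sets'' step is unnecessary, since the sphere Parseval identity holds for \emph{every} $r$ with $f(r\,\cdot)\in L^2(\S^{d-1})$, which is already a full-measure set by Fubini; and it is worth stating explicitly that the completeness of the spherical harmonics in $L^2(\S^{d-1})$ is itself the nontrivial classical input being invoked (it is exactly what the cited reference supplies), while everything else is routine Hilbert-space bookkeeping, as you say.
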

	\begin{quotetheorem}[Funk--Hecke theorem] \label{thm:Funk-Hecke}
		Let $d \geq 2$, $k \in \N$, $F \in L^1 ( [-1, 1], (1-t^2)^{(d-3)/2} \, dt )$ and write
		\begin{equation} \label{eq:mu in Funk-Hecke}
			\lambda^{(d)}[k; F] \coloneqq \frac{ 2 \pi^{(d-1)/2} }{ \Gamma( (d-1)/2 ) } \int_{-1}^{1} F( t ) \Pkd{k}{d}(t) (1-t^2)^{(d-3)/2} \, dt .
		\end{equation} 
		Then, for any $Y \in \HPk{k}(\S^{d-1})$ and $\xi \in \S^{d-1}$, we have
		\begin{equation} \label{eq:Funk-Hecke}
			\int_{\eta \in \S^{d-1}} F(\xi \cdot \eta) Y(\eta) \, d\sigma(\eta) 
			= \lambda^{(d)}[k; F] Y(\xi) . 
		\end{equation}
	\end{quotetheorem}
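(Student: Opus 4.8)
The plan is to recognise the left-hand side of \eqref{eq:Funk-Hecke} as an $O(d)$-equivariant integral operator evaluated at $Y$, and then to pin down the resulting eigenvalue by testing on one explicitly chosen spherical harmonic. Write $T_F h(\xi) \coloneqq \int_{\eta \in \S^{d-1}} F(\xi \cdot \eta)\, h(\eta) \, d\sigma(\eta)$. The one piece of genuine analysis is the reduction to a one-dimensional integral: for each fixed $\xi \in \S^{d-1}$, the pushforward of the surface measure $d\sigma$ on $\S^{d-1}$ under $\eta \mapsto \xi \cdot \eta$ equals $\frac{2\pi^{(d-1)/2}}{\Gamma((d-1)/2)} (1-t^2)^{(d-3)/2}\, dt$ on $[-1,1]$, so that for any bounded measurable $g$ on $[-1,1]$,
\begin{equation}
	\int_{\eta \in \S^{d-1}} F(\xi \cdot \eta)\, g(\xi \cdot \eta) \, d\sigma(\eta) = \frac{2\pi^{(d-1)/2}}{\Gamma((d-1)/2)} \int_{-1}^{1} F(t)\, g(t)\, (1-t^2)^{(d-3)/2} \, dt .
\end{equation}
Taking $g \equiv 1$ shows in particular that $T_F h$ is well defined and bounded for bounded $h$, hence for every $h \in \HPk{k}(\S^{d-1})$.

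Next I would check that $T_F$ is $O(d)$-equivariant: for $R \in O(d)$, the change of variables $\eta \mapsto R\eta$, together with the rotation invariance of $d\sigma$ and of the inner product, gives $T_F(h \circ R) = (T_F h) \circ R$. Since each $\HPk{k}(\S^{d-1})$ is an irreducible $O(d)$-module and the modules $\{ \HPk{k}(\S^{d-1}) \}_{k \in \N}$ are pairwise inequivalent, any $O(d)$-equivariant operator on $L^2(\S^{d-1})$ leaves every $\HPk{k}(\S^{d-1})$ invariant and acts on it as a scalar $\mu_k = \mu_k(F)$, by Schur's lemma. (Working with $O(d)$ rather than $SO(d)$ matters only when $d = 2$, where it is the reflections in $O(2)$---with which $T_F$ visibly commutes, since $F$ depends on $\xi \cdot \eta$ alone---that make $\HPk{k}(\S^1)$ irreducible.) This already establishes \eqref{eq:Funk-Hecke} with $\lambda^{(d)}[k;F]$ replaced by $\mu_k$, and only the value of $\mu_k$ remains.

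To compute $\mu_k$ I would test on the zonal harmonic. Fix $\xi_0 \in \S^{d-1}$ and set $Z(\eta) \coloneqq \Pkd{k}{d}(\xi_0 \cdot \eta)$; by the addition theorem $\frac{\dim \HPk{k}(\S^{d-1})}{\abs{\S^{d-1}}} \Pkd{k}{d}(\xi_0 \cdot \eta) = \sum_n \overline{\pkn{k}{n}(\xi_0)}\, \pkn{k}{n}(\eta)$, so $Z \in \HPk{k}(\S^{d-1})$. Evaluating the identity $T_F Z = \mu_k Z$ at $\xi = \xi_0$, and using $\Pkd{k}{d}(1) = 1$ (immediate from the recurrence) together with the one-dimensional reduction above, yields
\begin{equation}
\begin{aligned}
	\mu_k = (T_F Z)(\xi_0)
	&= \int_{\eta \in \S^{d-1}} F(\xi_0 \cdot \eta)\, \Pkd{k}{d}(\xi_0 \cdot \eta) \, d\sigma(\eta) \\
	&= \frac{2\pi^{(d-1)/2}}{\Gamma((d-1)/2)} \int_{-1}^{1} F(t)\, \Pkd{k}{d}(t)\, (1-t^2)^{(d-3)/2} \, dt
	= \lambda^{(d)}[k;F] .
\end{aligned}
\end{equation}

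The step I expect to be the main obstacle is the inference that $T_F$ acts by a \emph{single} scalar on each $\HPk{k}(\S^{d-1})$. The argument via irreducibility and Schur's lemma needs only $F \in L^1$, but requires a little care in dimension $d = 2$ (hence the appeal to $O(d)$ instead of $SO(d)$). A more hands-on alternative, bypassing representation theory, is to reduce first by density---polynomials are dense in $L^1([-1,1], (1-t^2)^{(d-3)/2}\, dt)$, and for fixed $Y$ and $\xi$ both sides of \eqref{eq:Funk-Hecke} depend continuously on $F$ in that norm---and then by linearity to the case $F = \Pkd{j}{d}$, for which \eqref{eq:Funk-Hecke} is exactly the addition theorem combined with the orthogonality of $\Pkd{j}{d}$ and $\Pkd{k}{d}$ with respect to the weight $(1-t^2)^{(d-3)/2}$ (and the normalisation $\abs{\S^{d-1}} / \dim \HPk{k}(\S^{d-1}) = \lambda^{(d)}[k; \Pkd{k}{d}]$, which itself follows from the addition theorem).
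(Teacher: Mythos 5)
The paper itself offers no proof of this statement: it is quoted as a classical result with a pointer to Atkinson--Han, whose proof runs essentially along the lines of your ``hands-on alternative'' (approximate $F$ by polynomials in the weighted $L^1$ norm, expand in the $\Pkd{j}{d}$, and combine the addition theorem with the orthogonality of the $\Pkd{j}{d}$). Your main argument via $O(d)$-equivariance and Schur's lemma is correct and is a genuinely different, more structural route: it avoids expanding $F$ altogether, and pins down the eigenvalue by testing on the zonal harmonic $\eta \mapsto \Pkd{k}{d}(\xi_0 \cdot \eta)$ together with $\Pkd{k}{d}(1)=1$. What it costs are the representation-theoretic inputs you invoke: that each $\HPk{k}(\S^{d-1})$ is irreducible over $\C$ under $O(d)$ (Schur's lemma in the ``scalar endomorphism'' form needs absolute irreducibility, which does hold here), that distinct degrees are pairwise inequivalent (for $d=2$ the dimensions all equal $2$, so one must compare characters of rotations rather than dimensions), and the bookkeeping step that invariance of $\HPk{k}$ follows by composing $T_F|_{\HPk{k}}$ with the equivariant orthogonal projections onto each $\HPk{j}$; the uniform bound $\sup_{\xi}\int_{\S^{d-1}}\abs{F(\xi\cdot\eta)}\,d\sigma(\eta)<\infty$ (Schur test) makes $T_F$ bounded on $L^2(\S^{d-1})$, which is what this uses. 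One minor point to patch: Schur's lemma gives $T_F Y=\mu_k Y$ only as an identity in $L^2$, i.e.\ almost everywhere, while \eqref{eq:Funk-Hecke} is asserted for every $\xi$; since $Y$ is continuous, the rotation identity $T_FY(R\xi)=\int_{\S^{d-1}} F(\xi\cdot\eta)\,Y(R\eta)\,d\sigma(\eta)$ together with the uniform $L^1$ bound shows $T_FY$ is continuous, which upgrades the identity to all $\xi$ (the same remark justifies evaluating at $\xi_0$ in your eigenvalue computation). With these small additions your proof is complete, and your closing density-plus-addition-theorem sketch is essentially the textbook argument behind the paper's citation.
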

	Theorems \ref{thm:spherical harmonics decomposition} and \ref{thm:Funk-Hecke} are classical and well known. 
	See \citet[Theorems 2.22, 2.38]{AH2012}, for example.
	Note that the function $\lambdakd{k}{d}$ can be written as
	\begin{equation}
		\lambdakd{k}{d}(r) 
		= \frac{1}{2} r^{d-2}  (\psi(r) )^2 \lambda^{(d)}[k; \Fwd{d}( r \sqrt{2 (1 - \variabledot)} ) ] , 
	\end{equation}
	in other words, it satisfies 
	\begin{equation}
		\frac{1}{2} r^{d-2}  (\psi(r) )^2 \int_{\eta \in \S^{d-1}} \Fwd{d}( r \sqrt{2 (1 - \xi \cdot \eta)} ) Y(\eta) \, d\sigma(\eta) 
		= \lambda_k(r) Y(\xi) 
	\end{equation}
	for each $k \in \N$, $Y \in \HPk{k}(\S^{d-1})$ and $\xi \in \S^{d-1}$.
	For readers' convenience, we list some other basic properties of $\lambdakd{k}{d}$ here.
	\begin{proposition} \label{prop:properties of lambdak}
		\renewcommand{\lambdakd}[2]{\lambda_{#1}^{(#2)}}
		We write $\lambdakd{k}{d}$ for $\lambdak{k}$ in $d$ dimension.
		Then the following hold:
	\begin{eqenumerate}
	\item \label{item:poperties of lambdak 1}
	We have $\lambdakd{k}{d}(r) \geq 0$ for any $k \in \N$ and $r > 0$.
	\item \label{item:poperties of lambdak 2}
	If $2k_1 + d_1 = 2k_2 + d_2$, then we have $\lambdakd{k_1}{d_1} = (2\pi)^{d_1 - d_2} \lambdakd{k_2}{d_2}$.
	\item \label{item:poperties of lambdak 3}
	If $\Fwd{d}$ is non-negative, then we have $\lambdakd{k+1}{d}(r) \leq \lambdakd{0}{d}(r)$ for any $k \in \N$ and $r > 0$. 
	Furthermore, if $\lambdakd{k+1}{d}(r) = \lambdakd{0}{d}(r)$, then we have $\lambdakd{0}{d}(r) = 0$ or $\lambdakd{k+1}{d}(r) = \infty$.
	\item \label{item:poperties of lambdak 4}
	If $\Fwd{d + 2j}$ is non-negative for any $j \in \N$, then we have $\lambdakd{k+1}{d}(r) \leq \lambdakd{k}{d}(r)$ for any $k \in \N$ and $r > 0$.
	\end{eqenumerate}
	\end{proposition}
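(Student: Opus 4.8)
The plan is to read \eqref{item:poperties of lambdak 1} and \eqref{item:poperties of lambdak 2} off the Bessel-function expression \eqref{eq:lambdak 1}, to derive \eqref{item:poperties of lambdak 3} from the Funk--Hecke expression \eqref{eq:lambdak 2}, and then to obtain \eqref{item:poperties of lambdak 4} from \eqref{item:poperties of lambdak 2} and \eqref{item:poperties of lambdak 3} by a dimension shift. For \eqref{item:poperties of lambdak 1}, note that in \eqref{eq:lambdak 1} the integrand $t\,w(t)\,(\BesselJ{k+d/2-1}(rt))^{2}$ is non-negative for $t,r>0$ because $w\geq0$, so $\lambda_{k}^{(d)}(r)\geq0$. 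For \eqref{item:poperties of lambdak 2}, the hypothesis $2k_{1}+d_{1}=2k_{2}+d_{2}$ says exactly that the Bessel orders $k_{1}+d_{1}/2-1$ and $k_{2}+d_{2}/2-1$ coincide; hence the integrals in \eqref{eq:lambdak 1} attached to $(k_{1},d_{1})$ and $(k_{2},d_{2})$ are identical, only the prefactors $(2\pi)^{d_{1}}$ and $(2\pi)^{d_{2}}$ differ, and this gives $\lambda_{k_{1}}^{(d_{1})}=(2\pi)^{d_{1}-d_{2}}\lambda_{k_{2}}^{(d_{2})}$.

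For \eqref{item:poperties of lambdak 3} I would use \eqref{eq:lambdak 2}. Set $G_{r}(t)\coloneqq\Fwd{d}(r\sqrt{2(1-t)})$, which is non-negative on $(-1,1)$ by hypothesis, and recall that $\Pkd{k}{d}$ is the ultraspherical polynomial normalized by $\Pkd{k}{d}(1)=1$ and satisfying $\abs{\Pkd{k}{d}(t)}\leq1$ on $[-1,1]$ (both follow from the recurrence and are classical; cf.\ \cite{AH2012}). Since $\Pkd{0}{d}\equiv1$, subtracting the instance of \eqref{eq:lambdak 2} of degree $k$ from that of degree $0$ and using $G_{r}\geq0$, $(1-t^{2})^{(d-3)/2}\geq0$ and $\Pkd{k}{d}(t)\leq1$ yields $\lambda_{k}^{(d)}(r)\leq\lambda_{0}^{(d)}(r)$ for every $k$, in particular $\lambda_{k+1}^{(d)}(r)\leq\lambda_{0}^{(d)}(r)$. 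For the equality claim, suppose $\lambda_{k+1}^{(d)}(r)=\lambda_{0}^{(d)}(r)$. If $\lambda_{0}^{(d)}(r)\in\{0,+\infty\}$ the asserted dichotomy holds (when $\lambda_{0}^{(d)}(r)=+\infty$ the equality forces $\lambda_{k+1}^{(d)}(r)=+\infty$). Otherwise $0<\lambda_{0}^{(d)}(r)<\infty$, so $\psi(r)>0$, $G_{r}$ is integrable against $(1-t^{2})^{(d-3)/2}$ on $(-1,1)$, and the equality collapses to
\[
\int_{-1}^{1}G_{r}(t)\,\bigl(1-\Pkd{k+1}{d}(t)\bigr)\,(1-t^{2})^{(d-3)/2}\,dt=0 .
\]
Both $G_{r}(t)(1-t^{2})^{(d-3)/2}$ and $1-\Pkd{k+1}{d}(t)$ are non-negative on $(-1,1)$, so the integrand vanishes for a.e.\ $t$; since $\Pkd{k+1}{d}$ is a non-constant polynomial, the set $\set{t\in(-1,1)}{\Pkd{k+1}{d}(t)=1}$ is finite, whence $G_{r}(t)(1-t^{2})^{(d-3)/2}=0$ a.e., and therefore $\lambda_{0}^{(d)}(r)=0$, contradicting $\lambda_{0}^{(d)}(r)>0$. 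Hence equality can occur only in the two cases listed.

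For \eqref{item:poperties of lambdak 4}, since $2(k+1)+d=2\cdot1+(d+2k)$ and $2k+d=0+(d+2k)$, item \eqref{item:poperties of lambdak 2} gives $\lambda_{k+1}^{(d)}(r)=(2\pi)^{-2k}\lambda_{1}^{(d+2k)}(r)$ and $\lambda_{k}^{(d)}(r)=(2\pi)^{-2k}\lambda_{0}^{(d+2k)}(r)$. Thus $\lambda_{k+1}^{(d)}(r)\leq\lambda_{k}^{(d)}(r)$ is equivalent to $\lambda_{1}^{(d+2k)}(r)\leq\lambda_{0}^{(d+2k)}(r)$, which follows from the inequality part of \eqref{item:poperties of lambdak 3} applied in dimension $d+2k$ (with its index set to $0$) --- legitimate because the hypothesis of \eqref{item:poperties of lambdak 4} provides $\Fwd{d+2k}\geq0$.

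I expect the one point requiring real care to be the equality case of \eqref{item:poperties of lambdak 3}: one has to peel off the degenerate values $\lambda_{0}^{(d)}(r)\in\{0,\infty\}$ and check that the finitely many zeros of $1-\Pkd{k+1}{d}$ do not obstruct the deduction that $G_{r}$ vanishes a.e. The remaining items are bookkeeping with Bessel orders, the normalization of $\Pkd{k}{d}$, and dimension shifts.
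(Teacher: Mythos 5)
Your proposal is correct and follows essentially the same route as the paper: \eqref{item:poperties of lambdak 1} and \eqref{item:poperties of lambdak 2} are read off \eqref{eq:lambdak 1}, \eqref{item:poperties of lambdak 3} comes from \eqref{eq:lambdak 2} together with the bound $\abs{\Pkd{k}{d}(t)} \leq 1$ (with strictness off a negligible set, which you derive from non-constancy of the polynomial rather than citing \cite[(2.39)]{AH2012}), and \eqref{item:poperties of lambdak 4} is exactly the paper's dimension-shift argument combining \eqref{item:poperties of lambdak 2} and \eqref{item:poperties of lambdak 3} in dimension $d+2k$. The extra care you take with the degenerate values in the equality case is a harmless elaboration of what the paper leaves implicit.
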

	\begin{proof}[Proof of Proposition \ref{prop:properties of lambdak}]
		\renewcommand{\lambdakd}[2]{\lambda_{#1}^{(#2)}}
	\eqref{item:poperties of lambdak 1} and \eqref{item:poperties of lambdak 2} immediately follows from \eqref{eq:lambdak 1}.
	To see \eqref{item:poperties of lambdak 3}, use \eqref{eq:lambdak 2} and the fact that the Legendre polynomial $\Pkd{k}{d}$ satisfies
	\begin{equation} \label{eq:Pkd inequality}
		\abs{ \Pkd{k}{d}(t) } < \Pkd{0}{d}(t) = 1 
	\end{equation}
	for any $k \in \N$, $d \geq 2$, and almost any $t \in [-1, 1]$ (see \citet[(2.39)]{AH2012}). 
	\eqref{item:poperties of lambdak 4} follows from \eqref{item:poperties of lambdak 2} and \eqref{item:poperties of lambdak 3}:
	\begin{equation*}
	\lambdakd{k+1}{d}
	\underset{\eqref{item:poperties of lambdak 2}}{=} 
	(2\pi)^{-2k} \lambdakd{1}{d + 2k} 
	\underset{\eqref{item:poperties of lambdak 3}}{\leq} 
	(2\pi)^{-2k} \lambdakd{0}{d + 2k} 
	\underset{\eqref{item:poperties of lambdak 2}}{=} 
	\lambdakd{k}{d}. \qedhere
	\end{equation*}
	\end{proof}
	Note that \eqref{item:poperties of lambdak 3} is useful to find explicit values of $\Sconstd{d}$, since $\lambdakd{k+1}{d}(r) \leq \lambdakd{0}{d}(r)$ implies 
	\begin{equation}
	(2\pi)^{d-1} \Sconstd{d} = \sup_{r > 0} \lambdakd{0}{d}(r) .
	\end{equation}
	However, it does not work well for $\Diracconstmd{m}{d}$, since $\lambdakd{k}{d}(r) \leq \lambdakd{0}{d}(r)$ does not guarantee $\Diraclambdak{k}(r) \leq \Diraclambdak{0}(r)$.
	On the other hand, if we have $\lambdakd{k+1}{d}(r) \leq \lambdakd{k}{d}(r)$ for any $k \in \N$, then we also have $\Diraclambdak{k+1}(r) \leq \Diraclambdak{k}(r)$ for any $k \in \N$, and so that 
	\begin{equation}
	(2\pi)^{d-1} \Diracconstmd{m}{d} = \sup_{r > 0} \Diraclambdak{0}(r) 
	\end{equation}
	holds. This is the reason why \eqref{item:poperties of lambdak 4} is more useful than \eqref{item:poperties of lambdak 3} in order to find $\Diracconstmd{m}{d}$.
	
	\section{Spherical harmonics and invariant subspaces} \label{section:invariant subspaces}
	Throughout this section, we consider $\Diracphim{m}$ as a linear operator on $L^2(\S^{d-1}, \C^{\dimN})$ via
	\begin{equation}
		(\Diracphim{m} f)(\xi) \coloneqq \Diracphim{m}(\xi) f(\xi) .
	\end{equation} 
	We begin with the following facts:
	\begin{proposition} \label{prop:invariant subspace}
		Let $V \subset L^2(\S^{d-1}, \C^{\dimN})$ be a linear subspace of $L^2(\S^{d-1}, \C^{\dimN})$.
		Then
		\begin{equation}
			V + \Diracphim{0} V \coloneqq \set{ f + \Diracphim{0} g }{ f, g \in V } 
		\end{equation}
		is an invariant subspace for $\Diracphim{0}$.
	\end{proposition}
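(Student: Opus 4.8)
The plan is to reduce the whole statement to the single fact that $\Diracphim{0}$ acts as an \emph{involution} on $L^2(\S^{d-1}, \C^\dimN)$. Recall from the anti-commutation relations for the gamma matrices that $\Diracphim{0}(\xi)^2 = \abs{\xi}^2 \matrixIN{\dimN}$ for every $\xi \in \R^d$. Restricting to the unit sphere, where $\abs{\xi} = 1$, this gives $\Diracphim{0}(\xi)^2 = \matrixIN{\dimN}$ pointwise. Since each $\Diracphim{0}(\xi)$ is Hermitian with $\Diracphim{0}(\xi)^2 = \matrixIN{\dimN}$, its operator norm equals $1$ on $\S^{d-1}$, so multiplication by $\Diracphim{0}(\xi)$ is a well-defined bounded operator on $L^2(\S^{d-1}, \C^\dimN)$, and moreover $\Diracphim{0}^2 = \id$ as an operator on this space.

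Granting this, the invariance is an immediate one-line computation. Let $h \in V + \Diracphim{0} V$ and write $h = f + \Diracphim{0} g$ with $f, g \in V$. Then
\begin{equation}
	\Diracphim{0} h = \Diracphim{0} f + \Diracphim{0}^2 g = g + \Diracphim{0} f ,
\end{equation}
which is again of the form (element of $V$) plus $\Diracphim{0}$ applied to (element of $V$); hence $\Diracphim{0} h \in V + \Diracphim{0} V$. Since $h$ was arbitrary, $\Diracphim{0}(V + \Diracphim{0} V) \subseteq V + \Diracphim{0} V$, which is exactly the assertion.

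There is essentially no obstacle here: the only point that deserves to be spelled out is that $\Diracphim{0}$ is genuinely an operator on $L^2(\S^{d-1}, \C^\dimN)$ that squares to the identity, and everything else is the trivial manipulation above. The reason to isolate this elementary fact is structural — applied with $V$ a space of $\C^\dimN$-valued spherical harmonics of a fixed degree, it produces the $\Diracphim{0}$-invariant building blocks underlying the modified spherical harmonics decomposition of $L^2(\S^{d-1}, \C^\dimN)$ developed later (Theorem \ref{thm:Ekn}).
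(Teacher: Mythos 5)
Your argument is correct and is essentially the paper's own proof: both rest on the identity $\Diracphim{0}(\xi)^2 = \matrixIN{\dimN}$ for $\xi \in \S^{d-1}$, followed by the one-line computation $\Diracphim{0}(f + \Diracphim{0} g) = g + \Diracphim{0} f \in V + \Diracphim{0} V$. The extra remarks on boundedness of the multiplication operator are fine but not needed beyond what the paper states.
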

	\begin{proposition} \label{prop:Gauss decomposition}
		Let $1 \leq j \leq d$, $k \geq 1$, and $Y \in \HPk{k}(\R^d)$, where $\HPk{k}(\R^d)$ denotes the space of homogeneous harmonic polynomials of degree k on $\R^d$. 
		Then there exist $Z \in \HPk{k-1}(\R^d)$ and $W \in \HPk{k+1}(\R^d)$ such that
		\begin{equation}
			\xi_j Y(\xi) = \abs{\xi}^2 Z(\xi) + W(\xi) .
		\end{equation}
		As a consequence, we have
		\begin{equation}
			\Diracphim{0} \HPk{k}(\S^{d-1}, \C^{\dimN}) \subset \HPk{k-1}(\S^{d-1}, \C^{\dimN}) \obot \HPk{k+1}(\S^{d-1}, \C^{\dimN}) .
		\end{equation}
	\end{proposition}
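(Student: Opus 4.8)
The plan is to produce $Z$ and $W$ explicitly, rather than appealing to the full Gauss (Fischer) decomposition of homogeneous polynomials into harmonic pieces. First I would record two elementary identities, each obtained from the product rule together with Euler's identity for homogeneous functions: for a homogeneous harmonic polynomial $Y$ of degree $k$ one has $\Laplacian(\xi_j Y) = 2\partial_j Y$ (because $\Laplacian \xi_j = 0$, $\nabla \xi_j \cdot \nabla Y = \partial_j Y$, and $\Laplacian Y = 0$), and for a homogeneous harmonic polynomial $Z$ of degree $k-1$ one has $\Laplacian(\abs{\xi}^2 Z) = 2(d + 2k - 2) Z$ (from $\Laplacian \abs{\xi}^2 = 2d$, $\xi \cdot \nabla Z = (k-1) Z$, and $\Laplacian Z = 0$).

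Since $\partial_j$ commutes with $\Laplacian$, the polynomial $\partial_j Y$ is homogeneous harmonic of degree $k-1$, that is $\partial_j Y \in \HPk{k-1}(\R^d)$. Because $d \geq 2$ and $k \geq 1$ we have $d + 2k - 2 \geq 2$, so I may set
\[
	Z \coloneqq \frac{1}{d + 2k - 2}\, \partial_j Y \in \HPk{k-1}(\R^d), \qquad W \coloneqq \xi_j Y - \abs{\xi}^2 Z .
\]
By construction $\xi_j Y = \abs{\xi}^2 Z + W$, and $W$ is homogeneous of degree $k+1$; the two identities above give $\Laplacian W = 2\partial_j Y - 2(d + 2k - 2) Z = 0$, so $W \in \HPk{k+1}(\R^d)$. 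This proves the first assertion.

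For the consequence I would apply the first part componentwise. Given $f \in \HPk{k}(\S^{d-1}, \C^\dimN)$, extend it to the (vector-valued) homogeneous harmonic polynomial $Y$ of degree $k$ with $Y|_{\S^{d-1}} = f$, and for each $1 \leq j \leq d$ write $\xi_j Y(\xi) = \abs{\xi}^2 Z_j(\xi) + W_j(\xi)$ with $Z_j \in \HPk{k-1}(\R^d, \C^\dimN)$ and $W_j \in \HPk{k+1}(\R^d, \C^\dimN)$. Then $\Diracphim{0}(\xi) Y(\xi) = \sum_{j=1}^{d} \xi_j \gammajd{j}{d} Y(\xi) = \abs{\xi}^2 \sum_{j=1}^{d} \gammajd{j}{d} Z_j(\xi) + \sum_{j=1}^{d} \gammajd{j}{d} W_j(\xi)$, and restricting to $\S^{d-1}$, where $\abs{\xi}^2 = 1$, exhibits $\Diracphim{0} f$ as the sum of $\bigl(\sum_j \gammajd{j}{d} Z_j\bigr)|_{\S^{d-1}} \in \HPk{k-1}(\S^{d-1}, \C^\dimN)$ and $\bigl(\sum_j \gammajd{j}{d} W_j\bigr)|_{\S^{d-1}} \in \HPk{k+1}(\S^{d-1}, \C^\dimN)$. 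That this sum is orthogonal (the symbol $\obot$) follows from the fact that spherical harmonics of distinct degrees are mutually orthogonal in $L^2(\S^{d-1})$, being eigenfunctions of the spherical Laplacian for the distinct eigenvalues $-(k-1)(k+d-3)$ and $-(k+1)(k+d-1)$.

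I do not expect a serious obstacle here. The only points needing care are the verification of the two Laplacian identities (routine product-rule computations combined with Euler's identity) and the observation that the denominator $d+2k-2$ never vanishes in the stated range of $d$ and $k$; the passage from $\R^d$ to $\S^{d-1}$ is harmless because $\abs{\xi}^2 \equiv 1$ there.
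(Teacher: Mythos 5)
Your proposal is correct and matches the paper's own argument essentially verbatim: you take the same explicit choice $Z = \frac{1}{d+2k-2}\partial_j Y$, $W = \xi_j Y - \abs{\xi}^2 Z$, and verify $\Laplacian W = 0$ by the same product-rule plus Euler-identity computation. The only difference is that you also spell out the componentwise application to $\Diracphim{0}$ and the orthogonality of distinct-degree spherical harmonics, which the paper leaves implicit.
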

	\begin{proof}[Proof of Proposition \ref{prop:invariant subspace}]
		Let $f, g \in V$. Then we have
		\begin{equation}
			\Diracphim{0} ( f + \Diracphim{0} g ) = \Diracphim{0} f + \Diracphim{0}^2 g = g + \Diracphim{0} f \in V + \Diracphim{0} V ,
		\end{equation}
		since $\Diracphim{0}^2 = \matrixIN{\dimN}$.
	\end{proof}
	\begin{proof}[Proof of Proposition \ref{prop:Gauss decomposition}]
		Let 
		\begin{gather}
			Z = \frac{1}{d + 2k - 2} \partial_j Y , \\
			W = \xi_j Y - \frac{1}{d + 2k - 2} \abs{\xi}^2 \partial_j Y .
		\end{gather}
		Then it is trivial that
		\begin{itemize}
			\item $\xi_j Y = \abs{\xi}^2 Z + W$, 
			\item $Z \in \HPk{k-1}(\R^d)$, 
			\item $W$ is a homogeneous polynomial of degree $k+1$.
		\end{itemize}
		Hence, it suffices to show that $\Laplacian W = 0$. 
		To see this, note that $\Laplacian Y = 0$ implies
		\begin{gather}
			\Laplacian ( \xi_j Y ) = 2 \partial_j Y , \\
			\Laplacian ( \abs{\xi}^2 \partial_j Y ) 
			= 2d \partial_j Y + 4 \sum_{i = 1}^{d} \xi_i \partial_{i} \partial_j Y . 
		\end{gather}
		Furthermore, since $\partial_j Y$ is a homogeneous polynomial of degree $k-1$, 
		we have
		\begin{equation}
			\sum_{i = 1}^{d} \xi_i \partial_{i} \partial_j Y = (k-1) \partial_j Y .
		\end{equation}
		Therefore, we conclude that
		\begin{align}
			\Laplacian W 
			&= 2 \partial_j Y - \frac{1}{d + 2k - 2} \mleft( 2d \partial_j Y + 4 \sum_{i = 1}^{d} \xi_i \partial_{i} \partial_j Y  \mright) \\
			&= \frac{4}{d + 2k - 2} \mleft( (k - 1) \partial_j Y - \sum_{i = 1}^{d} \xi_i \partial_{i} \partial_j Y \mright) \\
			&= 0 
		\end{align}
		holds.
	\end{proof}
	Using Propositions \ref{prop:invariant subspace} and \ref{prop:Gauss decomposition}, we prove the following result:
	\begin{lemma} \label{lem:Vk}
		Let $\{ \Vk{k} \}_{k \in \N}$ be a sequence of linear subspaces of $L^2(\S^{d-1}, \C^{\dimN})$ defined by
		\begin{align}
			\Vk{0} &= \HPk{0}( \S^{d-1}, \C^\dimN ) , \\
			\Vk{k+1} &= ( \Diracphim{0} \Vk{k} )^{\perp} \cap \HPk{k+1}( \S^{d-1}, \C^\dimN ) \\
			&= \set{ Y \in \HPk{k+1}( \S^{d-1}, \C^\dimN ) }{ \forall Z \in \Vk{k} , \innerproduct{Y}{ \Diracphim{0} Z}_{L^2(\S^{d-1})} = 0 } .
		\end{align}
		Then
		\begin{gather}
			\Vk{k} \subset \HPk{k}(\S^{d-1}, \C^\dimN), 
			\label{item:Vk 1}\\
			\Diracphim{0} \Vk{k} \subset \HPk{k+1}(\S^{d-1}, \C^\dimN) , 
			\label{item:Vk 2} \\
			\Diracphim{0} \Vk{k} \obot \Vk{k+1} = \HPk{k+1}( \S^{d-1}, \C^\dimN ) 
			\label{item:Vk 3} 
		\end{gather}
		hold for any $k \in \N$.
		As a consequence, we have
		\begin{equation}
			L^2(\S^{d-1}, \C^\dimN) = \bigobot_{  k \in \N  } ( \Vk{k} \obot \Diracphim{0} \Vk{k} ) .
		\end{equation}
	\end{lemma}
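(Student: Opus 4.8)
The plan is to prove the three relations \eqref{item:Vk 1}, \eqref{item:Vk 2}, \eqref{item:Vk 3} by a single induction on $k$, and then to read off the decomposition of $L^2(\S^{d-1}, \C^\dimN)$ from \eqref{item:Vk 3} together with the classical spherical harmonics decomposition of $L^2(\S^{d-1})$ (applied componentwise). Throughout, I would use two structural facts about $\Diracphim{0}$: since the $\gammaj{j}$ are Hermitian and the coordinates of $\xi \in \S^{d-1}$ are real, the matrix $\Diracphim{0}(\xi)$ is Hermitian for each $\xi$, so $\Diracphim{0}$ is self-adjoint as a multiplication operator on $L^2(\S^{d-1}, \C^\dimN)$; and $\Diracphim{0}^2 = \matrixIN{\dimN}$. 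I would also observe at the outset that \eqref{item:Vk 1} for $k+1$ is immediate from the definition of $\Vk{k+1}$, and that — once \eqref{item:Vk 2} is available at level $k$ — relation \eqref{item:Vk 3} follows for free, because $\Vk{k+1}$ is by definition the orthogonal complement of $\Diracphim{0}\Vk{k}$ inside the \emph{finite-dimensional} space $\HPk{k+1}(\S^{d-1}, \C^\dimN)$. Thus the induction reduces entirely to proving \eqref{item:Vk 2}, i.e.\ $\Diracphim{0}\Vk{k} \subset \HPk{k+1}(\S^{d-1}, \C^\dimN)$.

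For the base case $k=0$ one argues directly: $\Vk{0} = \HPk{0}(\S^{d-1}, \C^\dimN)$ consists of constant $\C^\dimN$-valued functions, and $\Diracphim{0}(\xi)v = \sum_{j=1}^{d}\xi_j\gammaj{j}v$ has components linear in $\xi$, hence degree-one spherical harmonics. For the inductive step I would assume \eqref{item:Vk 1}--\eqref{item:Vk 3} up to level $k$, take $Y \in \Vk{k+1} \subset \HPk{k+1}(\S^{d-1}, \C^\dimN)$, and use the consequence of Proposition \ref{prop:Gauss decomposition} to write $\Diracphim{0}Y = A + B$ with $A \in \HPk{k}(\S^{d-1}, \C^\dimN)$ and $B \in \HPk{k+2}(\S^{d-1}, \C^\dimN)$; it then suffices to show $A = 0$. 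Since $Y$ is orthogonal to $\Diracphim{0}\Vk{k}$ and $\Diracphim{0}$ is self-adjoint, $\Diracphim{0}Y$ is orthogonal to $\Vk{k}$; as $\Vk{k} \subset \HPk{k}$ while $B \in \HPk{k+2}$ is orthogonal to $\HPk{k}$, this forces $A \perp \Vk{k}$.

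If $k=0$ this already gives $A = 0$, since $A \in \HPk{0} = \Vk{0}$. If $k \geq 1$, relation \eqref{item:Vk 3} at level $k-1$ reads $\HPk{k}(\S^{d-1}, \C^\dimN) = \Diracphim{0}\Vk{k-1} \obot \Vk{k}$, so $A \perp \Vk{k}$ forces $A \in \Diracphim{0}\Vk{k-1}$, say $A = \Diracphim{0}W$ with $W \in \Vk{k-1} \subset \HPk{k-1}(\S^{d-1}, \C^\dimN)$; then, using self-adjointness, $\Diracphim{0}^2 = \matrixIN{\dimN}$, and orthogonality of spherical harmonics of distinct degrees,
\[
\norm{A}^2 = \innerproduct{\Diracphim{0}W}{A} = \innerproduct{W}{\Diracphim{0}A} = \innerproduct{W}{Y - \Diracphim{0}B} = \innerproduct{W}{Y} - \innerproduct{\Diracphim{0}W}{B} = 0,
\]
since $W \in \HPk{k-1}$ and $Y \in \HPk{k+1}$, while $A = \Diracphim{0}W \in \HPk{k}$ and $B \in \HPk{k+2}$. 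Hence $A = 0$ and $\Diracphim{0}Y = B \in \HPk{k+2}(\S^{d-1}, \C^\dimN)$, which is \eqref{item:Vk 2} at level $k+1$ and closes the induction.

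For the final assertion I would combine the classical decomposition $L^2(\S^{d-1}, \C^\dimN) = \bigobot_{k \in \N}\HPk{k}(\S^{d-1}, \C^\dimN)$ with $\HPk{0} = \Vk{0}$ and with \eqref{item:Vk 3} (rewriting each $\HPk{k+1}$ as $\Diracphim{0}\Vk{k} \obot \Vk{k+1}$) to present $L^2(\S^{d-1}, \C^\dimN)$ as the orthogonal Hilbert sum of $\Vk{0}, \Diracphim{0}\Vk{0}, \Vk{1}, \Diracphim{0}\Vk{1}, \dots$; these subspaces are pairwise orthogonal, which one checks degree by degree using \eqref{item:Vk 1} and \eqref{item:Vk 2} (and, for the adjacent pair $\Vk{k+1}, \Diracphim{0}\Vk{k}$, the very definition of $\Vk{k+1}$), and regrouping consecutive pairs yields $L^2(\S^{d-1}, \C^\dimN) = \bigobot_{k \in \N}(\Vk{k} \obot \Diracphim{0}\Vk{k})$, each summand being $\Diracphim{0}$-invariant by Proposition \ref{prop:invariant subspace}. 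I expect the only genuinely non-formal step to be the vanishing of the lower-degree component $A$ in the inductive step — this is where the recursive definition of the $\Vk{k}$ is actually used — while everything else is bookkeeping with the finite-dimensional spaces $\HPk{k}$ and the two identities for $\Diracphim{0}$.
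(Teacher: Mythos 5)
Your proposal is correct and follows essentially the same route as the paper: reduce everything to \eqref{item:Vk 2}, prove it by induction using Proposition \ref{prop:Gauss decomposition}, the self-adjointness and involutivity (hence unitarity) of $\Diracphim{0}$, the orthogonality of spherical harmonics of distinct degrees, and the defining orthogonality of $\Vk{k+1}$. The only differences are bookkeeping: the paper runs the induction in steps of two (base cases $k=0,1$, then $k_0 \to k_0+2$) and kills the lower-degree component via $\innerproduct{\Diracphim{0}Y}{\Diracphim{0}Z} = \innerproduct{Y}{Z} = 0$, whereas you use a strong one-step induction invoking \eqref{item:Vk 3} at level $k-1$ and an equivalent norm computation.
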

	\begin{proof}[Proof of Lemma \ref{lem:Vk}]
		\eqref{item:Vk 1} is trivial from the definition of $\Vk{k}$. 
		Moreover, \eqref{item:Vk 3} immediately follows from \eqref{item:Vk 2}.
		We prove \eqref{item:Vk 2} by induction.
		
		The case $k = 0$ is trivial since $\Vk{0} = \HPk{0}( \S^{d-1}, \C^\dimN )$.
		
		To see the case $k = 1$, fix $Y \in \Vk{1}$ arbitrarily.
		Since we know that $Y \in \HPk{1}(\S^{d-1}, \C^\dimN)$ holds by \eqref{item:Vk 1},
		Lemma \ref{prop:Gauss decomposition} implies
		\begin{equation}
			\Diracphim{0} Y \in \HPk{0}(\S^{d-1}, \C^\dimN) \obot \HPk{2}(\S^{d-1}, \C^\dimN) . 
		\end{equation}
		On the other hand, using the definition of $V_1$ and the self-adjointness of $\Diracphim{0}$, we get
		\begin{equation}
			\innerproduct{ \Diracphim{0} Y }{ Z }_{L^2(\S^{d-1})} 
			= \innerproduct{ Y }{ \Diracphim{0} Z }_{L^2(\S^{d-1})} 
			= 0
		\end{equation}
		for any $Z \in \Vk{0} = \HPk{0}(\S^{d-1}, \C^\dimN)$. 
		Therefore, we conclude that \eqref{item:Vk 2} holds for $k = 1$.
		
		Now we show that if \eqref{item:Vk 2} holds for $k = k_0$, 
		then it also holds for $k = k_0 + 2$.
		Fix $Y \in \Vk{k_0 + 2}$ arbitrarily.
		Since we know that $Y \in \HPk{k_0 + 2}(\S^{d-1}, \C^\dimN)$ holds by \eqref{item:Vk 1},
		Lemma \ref{prop:Gauss decomposition} implies
		\begin{equation}
			\Diracphim{0} Y \in \HPk{k_0 + 1}(\S^{d-1}, \C^\dimN) \obot \HPk{k_0 + 3}(\S^{d-1}, \C^\dimN) .
		\end{equation}
		Moreover, since we have 
		\begin{equation}
			\HPk{k_0 + 1}(\S^{d-1}, \C^\dimN) = \Diracphim{0} \Vk{k} \obot \Vk{k+1}
		\end{equation}
		by the induction hypothesis, we obtain
		\begin{equation}
			\Diracphim{0} Y \in \Diracphim{0} \Vk{k_0} \obot \Vk{k_0 + 1} \obot \HPk{k_0 + 3}(\S^{d-1}, \C^\dimN) .
		\end{equation}
		On the other hand, using the definition of $\Vk{k_0 + 2}$ and the self-adjointness of $\Diracphim{0}$, we get
		\begin{equation}
			\innerproduct{ \Diracphim{0} Y }{ Z }_{L^2(\S^{d-1})} 
			= \innerproduct{ Y }{ \Diracphim{0} Z }_{L^2(\S^{d-1})} 
			= 0
		\end{equation}
		for any $Z \in \Vk{k_0 + 1} $. 
		Furthermore, using $\Vk{k_0 + 2} \subset \HPk{k_0 + 2}(\S^{d-1}, \C^\dimN)$ and the unitarity of $\Diracphim{0}$, we get
		\begin{equation}
			\innerproduct{ \Diracphim{0} Y }{ \Diracphim{0} Z }_{L^2(\S^{d-1})} 
			= \innerproduct{ Y }{ Z }_{L^2(\S^{d-1})} 
			= 0
		\end{equation}
		for any $Z \in \Vk{k_0} \subset \HPk{k_0}(\S^{d-1}, \C^\dimN)$.
		Therefore, we conclude that \eqref{item:Vk 2} holds for $k = k_0 + 2$.
	\end{proof}
	Furthermore, in order to compute $\gammaj{d+1} Y$ and $\gammaj{d+1} \Diracphim{0} Y$ for $Y \in \Vk{k}$, we decompose $\Vk{k}$ as follows:
	\begin{lemma} \label{lem:Vkb}
		For each $k \in \N$ and $\mu \in \{1, -1\}$, we write
		\begin{equation}
			\Vkm{k} \coloneqq \set{ Y \in \Vk{k} }{ \gammaj{d+1} Y = \mu Y } .
		\end{equation}
		Then we have
		\begin{gather}
			\Vk{k} = \bigobot_{\mu \in \{ 1, -1 \}} \Vkm{k} ,
			\label{eq:Vkb 1}\\
			\Diracphim{0} \Vkm{k} = \set{ \Diracphim{0} Y }{ Y \in \Vk{k}, \, \gammaj{d+1} \Diracphim{0} Y = - \mu \Diracphim{0} Y } .
			\label{eq:Vkb 2}
		\end{gather}
	\end{lemma}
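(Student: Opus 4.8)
The plan is to exploit two algebraic facts about the matrix $\gammaj{d+1}$. First, $\gammaj{d+1}$ is Hermitian with $\gammaj{d+1}^2 = \matrixIN{\dimN}$ (the case $j = k = d+1$ of the anti-commutation relation), so the pointwise multiplier it induces on $L^2(\S^{d-1}, \C^\dimN)$ is a self-adjoint unitary involution, whose only eigenvalues are $\pm 1$; its $(\pm 1)$-eigenspaces are mutually orthogonal in the $L^2(\S^{d-1}, \C^\dimN)$ inner product, since that inner product integrates the pointwise Hermitian inner product on $\C^\dimN$. Second, $\gammaj{d+1}$ anti-commutes with $\Diracphim{0}$: since $\gammaj{d+1}\gammaj{j} = -\gammaj{j}\gammaj{d+1}$ for $1 \le j \le d$, we get $\gammaj{d+1}\Diracphim{0}(\xi) = -\Diracphim{0}(\xi)\gammaj{d+1}$ for every $\xi$.

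The first step is to show, by induction on $k$, that each $\Vk{k}$ is invariant under the pointwise action of $\gammaj{d+1}$. The base case $\Vk{0} = \HPk{0}(\S^{d-1}, \C^\dimN)$ is immediate, since $\gammaj{d+1}$ acts only on the $\C^\dimN$-factor. For the inductive step, $\HPk{k+1}(\S^{d-1}, \C^\dimN)$ is $\gammaj{d+1}$-invariant for the same reason, and $\Diracphim{0}\Vk{k}$ is $\gammaj{d+1}$-invariant because $\gammaj{d+1}\Diracphim{0} Z = \Diracphim{0}(-\gammaj{d+1} Z) \in \Diracphim{0}\Vk{k}$ for every $Z \in \Vk{k}$, using the anti-commutation relation and the inductive hypothesis. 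Taking the orthogonal complement (which preserves invariance by self-adjointness of $\gammaj{d+1}$) and intersecting with $\HPk{k+1}(\S^{d-1}, \C^\dimN)$ then shows $\Vk{k+1} = (\Diracphim{0}\Vk{k})^{\perp} \cap \HPk{k+1}(\S^{d-1}, \C^\dimN)$ is $\gammaj{d+1}$-invariant. With this in hand, $\eqref{eq:Vkb 1}$ is nothing but the orthogonal eigenspace decomposition of the involution $\gammaj{d+1}$ restricted to the invariant subspace $\Vk{k}$, the $(+1)$- and $(-1)$-eigenspaces being exactly the spaces $\Vkm{k}$ with $\mu = 1$ and $\mu = -1$.

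For $\eqref{eq:Vkb 2}$ I would rewrite the indexing set on the right-hand side. Fix $Y \in \Vk{k}$. By the anti-commutation relation, $\gammaj{d+1}\Diracphim{0} Y = -\Diracphim{0}\gammaj{d+1} Y$, so the condition $\gammaj{d+1}\Diracphim{0} Y = -\mu\Diracphim{0} Y$ is equivalent to $\Diracphim{0}(\gammaj{d+1} Y - \mu Y) = 0$. Now $\Diracphim{0}(\xi)$ is invertible for every $\xi \in \S^{d-1}$ (indeed $\Diracphim{0}(\xi)^2 = \matrixIN{\dimN}$ there), so $\Diracphim{0}$ is injective on $L^2(\S^{d-1}, \C^\dimN)$; hence the last condition is equivalent to $\gammaj{d+1} Y = \mu Y$, that is, to $Y \in \Vkm{k}$. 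Applying $\Diracphim{0}$ to both descriptions of this set gives $\eqref{eq:Vkb 2}$.

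There is no essential difficulty here; the only things to be careful about are the bookkeeping in the induction — verifying that each of the three constructions ($\Diracphim{0}$-image, orthogonal complement inside $L^2(\S^{d-1}, \C^\dimN)$, intersection with $\HPk{k+1}(\S^{d-1}, \C^\dimN)$) preserves $\gammaj{d+1}$-invariance — and being explicit that the sum in $\eqref{eq:Vkb 1}$ is genuinely orthogonal because eigenvectors of the Hermitian multiplier $\gammaj{d+1}$ belonging to distinct eigenvalues are $L^2(\S^{d-1}, \C^\dimN)$-orthogonal.
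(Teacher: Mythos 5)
Your proposal is correct and follows essentially the same route as the paper: an induction showing $\gammaj{d+1}\Vk{k} \subset \Vk{k}$ (the paper verifies the orthogonality to $\Diracphim{0}\Vk{k}$ by a direct inner-product computation, you package it as invariance of image, orthogonal complement, and intersection — the same content), followed by the eigenspace decomposition of the Hermitian involution for \eqref{eq:Vkb 1}. For \eqref{eq:Vkb 2} your use of injectivity of $\Diracphim{0}$ is just a rephrasing of the paper's multiplication by $\Diracphim{0}$ using $\Diracphim{0}^2 = \matrixIN{\dimN}$, so there is no substantive difference.
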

	\begin{proof}[Proof of Lemma \ref{lem:Vkb}]
		In order to prove \eqref{eq:Vkb 1}, it is enough to show that 
		\begin{equation} \label{eq:Vkb 3}
			\gammaj{d+1} \Vk{k} \subset \Vk{k} 
		\end{equation}
		holds, since $\gammaj{d+1}$ is Hermitian.
		We prove \eqref{eq:Vkb 3} by induction.
		In the case $k = 0$, it is trivial since $\Vk{0} = \HPk{0}(\S^{d-1}, \C^\dimN)$.
		Suppose that \eqref{eq:Vkb 1} holds for $k = k_0$ and
		fix $Y \in \Vk{k_0 + 1}$ arbitrarily. 
		Since $\gammaj{d+1} Y \in \HPk{k_0 + 1}(\S^{d-1}, \C^\dimN)$ is immediate from $\Vk{k_0 + 1} \subset \HPk{k_0 + 1}(\S^{d-1}, \C^\dimN)$, it suffices to show that
		\begin{equation}
			\innerproduct{\gammaj{d+1} Y}{\Diracphim{0} Z}_{L^2(\S^{d-1})} = 0
		\end{equation}
		for any $Z \in \Vk{k_0}$. 
		To see this, notice that we have:
		\begin{itemize}
			\item $\innerproduct{Y}{\Diracphim{0} Z}_{L^2(\S^{d-1})} = 0$ for any $Z \in \Vk{k_0}$, since $Y \in \Vk{k_0 + 1} \subset ( \Diracphim{0} \Vk{k} )^{\perp}$.
			\item $\gammaj{d+1} \Vk{k_0} \subset \Vk{k_0}$ by the induction hypothesis.
		\end{itemize}
		Combining these, we get
		\begin{equation}
			\innerproduct{\gammaj{d+1} Y}{\Diracphim{0} Z}_{L^2(\S^{d-1})} 
			= \innerproduct{Y}{\gammaj{d+1} \Diracphim{0} Z}_{L^2(\S^{d-1})} \\
			= - \innerproduct{Y}{\Diracphim{0} \gammaj{d+1} Z}_{L^2(\S^{d-1})} \\
			= 0 .
		\end{equation}
		Therefore, \eqref{eq:Vkb 3} holds for any $k \in \N$.
		
		Now we prove \eqref{eq:Vkb 2}. 
		If $Y \in \Vkm{k}$, then
		\begin{equation}
			\gammaj{d+1} \Diracphim{0} Y = - \Diracphim{0} \gammaj{d+1} Y = - \mu \Diracphim{0} Y .
		\end{equation}
		On the other hand, if $Y \in \Vk{k}$ satisfies $\gammaj{d+1} \Diracphim{0} Y = - \mu \Diracphim{0} Y$, then
		\begin{equation}
			\gammaj{d+1} Y = \gammaj{d+1} \Diracphim{0}^2 Y = - \Diracphim{0} \gammaj{d+1} \Diracphim{0} Y = \mu \Diracphim{0}^2 Y = \mu Y, 
		\end{equation}
		hence $\Diracphim{0} Y \in \Diracphim{0} \Vkm{k}$.
	\end{proof}
	As a consequence, we obtain the following result:
	\begin{theorem} \label{thm:Ekn}
		For each $k \in \N$ and $\mu \in \{ 1, -1 \}$, let $\{ \eknm{k}{n} \}_n$ be an orthonormal basis for $\Vkm{k}$, 
		and write 
		\begin{equation}
			\Eknm{k}{n} \coloneqq 
			\begin{cases}
			\begin{pmatrix}
				\eknm{k}{n} & \Diracphim{0} \eknm{k}{n}
			\end{pmatrix} , & \mu = 1 , \\
			\begin{pmatrix}
	\Diracphim{0} \eknm{k}{n} & \eknm{k}{n}
\end{pmatrix} , & \mu = - 1 .	
\end{cases}		
		\end{equation}
		Then the following hold:
		\begin{eqenumerate}
			\item \label{item:spherical harmonics decomposition Dirac}
			For any $f \in L^2(\R^d, \C^\dimN)$, there uniquely exists $\{ \fknm{k}{n} \} \subset L^2(\positiveR, \C^2)$ satisfying
			\begin{gather}
				f(\xi) =  \abs{\xi}^{-(d-1)/2} \sum_{\mu \in \{ 1, -1 \}} \sumk \sumn \Eknm{k}{n}(\xi / \abs{\xi}) \fknm{k}{n}(\abs{\xi}) , 
				\label{eq:spherical harmonics decomposition Dirac} \tag{\ref{item:spherical harmonics decomposition Dirac}.i}
				\\
				\norm{f}_{L^2(\R^d, \C^N)}^2 = \sum_{\mu \in \{ 1, -1 \}} \sumk \sumn \norm{\fknm{k}{n}}_{L^2(\positiveR, \C^2)}^2 .
				\label{eq:spherical harmonics decomposition norm Dirac} \tag{\ref{item:spherical harmonics decomposition Dirac}.ii}
			\end{gather}
			Conversely, for any $\{ \fknm{k}{n} \} \subset L^2(\positiveR, \C^2)$ satisfying
			\begin{equation}
				\sum_{\mu \in \{ 1, -1 \}}	\sum_{k=0}^{\infty} \sum_{n}  \norm{\fknm{k}{n}}_{L^2(\positiveR, \C^2)}^2 < \infty ,
			\end{equation}
			the function $f$ given by \eqref{eq:spherical harmonics decomposition Dirac} is in $L^2(\R^d, \C^\dimN)$ and satisfies \eqref{eq:spherical harmonics decomposition norm Dirac}.
			\item \label{item:transformation matrix}
			Let 
			\begin{equation}
				\Lambdak{k}(r) 
				\coloneqq 
				\begin{pmatrix}
					\lambdakd{k}{d}( r ) & 0 \\
					0 & \lambdakd{k+1}{d}( r )
				\end{pmatrix} , \quad 
				\sigmaj{1} \coloneqq \begin{pmatrix}
					0 & 1 \\
					1 & 0
				\end{pmatrix} , \quad
				\sigmaj{3}
				\coloneqq 
				\begin{pmatrix}
					1 & 0 \\
					0 & -1
				\end{pmatrix} .
			\end{equation}
			Then we have
			\begin{gather}
				\frac{1}{2} \abs{\xi}^{d-2} (\psi(\abs{\xi}))^2 \int_{ \eta \in \S^{d-1} } \Fwd{d}( \abs{\xi} \sqrt{2(1 - (\xi/\abs{\xi}) \cdot \eta)} ) \Eknm{k}{n}( \eta ) \, d\sigma(\eta)
				= \Eknm{k}{n}( \xi / \abs{\xi} ) \Lambdak{k}(\abs{\xi})  , 
				\label{eq:Funk-Hecke Ekn} \tag{\ref{item:transformation matrix}.i}\\
				\Diracphim{0}(\xi) \Eknm{k}{n}(\xi / \abs{\xi} ) 
				= \abs{\xi} \Eknm{k}{n}(\xi / \abs{\xi} ) \sigmaj{1} ,  
				\label{eq:Ekn sigma_1} \tag{\ref{item:transformation matrix}.ii}\\
				\gammaj{d+1} \Eknm{k}{n}(\xi / \abs{\xi} ) 
				=  \Eknm{k}{n}(\xi / \abs{\xi} ) \sigmaj{3} .
				\label{eq:Ekn sigma_3} \tag{\ref{item:transformation matrix}.iii}\\
			\end{gather}
		\end{eqenumerate}
	\end{theorem}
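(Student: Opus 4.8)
The plan is to obtain Theorem~\ref{thm:Ekn} by assembling the pieces already supplied by Lemmas~\ref{lem:Vk} and~\ref{lem:Vkb}. The first step is to check that the columns of the matrices $\Eknm{k}{n}$, taken over all $k\in\N$, $\mu\in\{1,-1\}$ and $n$, form an orthonormal basis of $L^2(\S^{d-1},\C^\dimN)$. Lemmas~\ref{lem:Vk} and~\ref{lem:Vkb} together give the orthogonal decomposition $L^2(\S^{d-1},\C^\dimN)=\bigobot_{\mu\in\{1,-1\}}\bigobot_{k\in\N}(\Vkm{k}\obot\Diracphim{0}\Vkm{k})$, so it is enough to see that $\{\eknm{k}{n}\}_n\cup\{\Diracphim{0}\eknm{k}{n}\}_n$ is an orthonormal basis of $\Vkm{k}\obot\Diracphim{0}\Vkm{k}$ for each fixed $(k,\mu)$. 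Orthonormality of $\{\Diracphim{0}\eknm{k}{n}\}_n$, and completeness within the block, use that $\Diracphim{0}$ is unitary on $L^2(\S^{d-1},\C^\dimN)$ — indeed $\Diracphim{0}(\omega)$ is Hermitian with $\Diracphim{0}(\omega)^2=\matrixIN{\dimN}$ for $\omega\in\S^{d-1}$ — while the orthogonality $\eknm{k}{n}\perp\Diracphim{0}\eknm{k}{n}$ and the orthogonality of blocks with $\abs{k-k'}\ge 2$ come from $\Vkm{k}\subset\HPk{k}(\S^{d-1},\C^\dimN)$ and $\Diracphim{0}\Vkm{k}\subset\HPk{k+1}(\S^{d-1},\C^\dimN)$ being of different homogeneities (Lemma~\ref{lem:Vk}). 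The only non-automatic orthogonality is the cross-block relation $\Diracphim{0}\Vk{k}\perp\Vk{k+1}$ at consecutive degrees, which is precisely the defining property $\Vk{k+1}=(\Diracphim{0}\Vk{k})^\perp\cap\HPk{k+1}(\S^{d-1},\C^\dimN)$.

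The decomposition~\eqref{eq:spherical harmonics decomposition Dirac} and the Parseval identity~\eqref{eq:spherical harmonics decomposition norm Dirac} then follow from the standard polar-coordinate argument that proves Theorem~\ref{thm:spherical harmonics decomposition}, with the scalar system $\{\pkn{k}{n}\}$ replaced by the columns of the $\Eknm{k}{n}$: under the unitary identification $L^2(\R^d,\C^\dimN)\cong L^2(\positiveR,r^{d-1}\,dr;\,L^2(\S^{d-1},\C^\dimN))$, expand the spherical part of $f(r\,\variabledot)$ in the basis above, collect the two coefficients against $\eknm{k}{n}$ and $\Diracphim{0}\eknm{k}{n}$ — in the order fixed by the definition of $\Eknm{k}{n}$ — into a single $\C^2$-valued function $\fknm{k}{n}$, and absorb the Jacobian $r^{d-1}$ into the prefactor $\abs{\xi}^{-(d-1)/2}$. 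The converse direction is the same computation read in reverse.

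For the second part I would prove the three identities by direct computation on the two columns of $\Eknm{k}{n}$, treating $\mu=1$ and $\mu=-1$ in turn. For~\eqref{eq:Ekn sigma_1}, homogeneity of the symbol gives $\Diracphim{0}(\xi)=\abs{\xi}\,\Diracphim{0}(\xi/\abs{\xi})$, and $\Diracphim{0}(\omega)^2=\matrixIN{\dimN}$ on $\S^{d-1}$ shows that left multiplication by $\Diracphim{0}(\xi/\abs{\xi})$ interchanges the columns $\eknm{k}{n}(\xi/\abs{\xi})$ and $(\Diracphim{0}\eknm{k}{n})(\xi/\abs{\xi})$ of $\Eknm{k}{n}(\xi/\abs{\xi})$, i.e.\ amounts to right multiplication by $\sigmaj{1}$ for both values of $\mu$. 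For~\eqref{eq:Ekn sigma_3}, the anticommutation relation $\gammaj{d+1}\gammaj{j}=-\gammaj{j}\gammaj{d+1}$ ($1\le j\le d$) gives $\gammaj{d+1}\Diracphim{0}(\omega)=-\Diracphim{0}(\omega)\gammaj{d+1}$, while $\gammaj{d+1}\eknm{k}{n}=\mu\,\eknm{k}{n}$ by the definition of $\Vkm{k}$; hence $\gammaj{d+1}$ scales $\eknm{k}{n}$ by $\mu$ and $\Diracphim{0}\eknm{k}{n}$ by $-\mu$, and because the case split in the definition of $\Eknm{k}{n}$ places the column on which $\gammaj{d+1}$ acts by $+1$ first for both $\mu$, this is right multiplication by $\sigmaj{3}$. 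For~\eqref{eq:Funk-Hecke Ekn}, the integral operator acts componentwise, and by Lemma~\ref{lem:Vk} one column of $\Eknm{k}{n}$ has components in $\HPk{k}(\S^{d-1})$ and the other in $\HPk{k+1}(\S^{d-1})$; applying the Funk--Hecke theorem (Theorem~\ref{thm:Funk-Hecke}) componentwise and using $\lambdakd{\ell}{d}(r)=\tfrac12 r^{d-2}(\psi(r))^2\,\lambda^{(d)}[\ell;\Fwd{d}(r\sqrt{2(1-\variabledot)})]$ from Section~\ref{section:preliminaries} shows that each column is reproduced times $\lambdakd{k}{d}(\abs{\xi})$ or $\lambdakd{k+1}{d}(\abs{\xi})$ according to its degree, i.e.\ the operator is right multiplication by $\Lambdak{k}(\abs{\xi})$ (the ordering of its diagonal entries being matched to the columns by the same case split). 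This is also the point at which the precise hypotheses on $(w,\psi)$ — finiteness of the integrals defining the $\lambdakd{k}{d}$, and integrability of $\Fwd{d}(r\sqrt{2(1-\variabledot)})$ against $(1-t^2)^{(d-3)/2}\,dt$ for a.e.\ $r>0$ so that Funk--Hecke applies — would be made explicit.

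I expect the crux to be organizational rather than any single computation. The substantive input — the construction of the spaces $\Vk{k}$ and the fact that $\Diracphim{0}\Vk{k}$ sits in degree $k+1$ — is already carried out in Lemmas~\ref{lem:Vk} and~\ref{lem:Vkb}. What needs care here is keeping the two columns of $\Eknm{k}{n}$ in the $\mu$-dependent order prescribed by its definition so that~\eqref{eq:Ekn sigma_1},~\eqref{eq:Ekn sigma_3} and~\eqref{eq:Funk-Hecke Ekn} all come out in the stated matrix form, and carefully collating the several orthogonality relations — in particular the cross-block one at consecutive degrees, which is exactly why the $\Vk{k+1}$ were defined as orthogonal complements — to conclude that the columns of all the $\Eknm{k}{n}$ form an orthonormal basis.
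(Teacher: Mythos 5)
Your proposal takes the same route the paper intends: the paper presents Theorem \ref{thm:Ekn} as a direct consequence of Lemmas \ref{lem:Vk} and \ref{lem:Vkb} combined with Theorem \ref{thm:spherical harmonics decomposition} and the Funk--Hecke theorem, and your part (i), as well as the identities \eqref{eq:Ekn sigma_1} and \eqref{eq:Ekn sigma_3}, are argued exactly along these lines and are correct.

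There is, however, a genuine problem in your treatment of \eqref{eq:Funk-Hecke Ekn} in the case $\mu=-1$, located precisely at the parenthetical claim that the ordering of the diagonal entries of $\Lambdak{k}$ is ``matched to the columns by the same case split''. It is not: $\Lambdak{k}$ is defined once and for all as $\operatorname{diag}(\lambdakd{k}{d},\lambdakd{k+1}{d})$, while for $\mu=-1$ the first column of $\Eknm{k}{n}$ is $\Diracphim{0}\eknm{k}{n}\in\HPk{k+1}(\S^{d-1},\C^\dimN)$ and the second is $\eknm{k}{n}\in\HPk{k}(\S^{d-1},\C^\dimN)$. Hence the componentwise Funk--Hecke computation you describe yields right multiplication by $\operatorname{diag}(\lambdakd{k+1}{d},\lambdakd{k}{d})=\sigmaj{1}\Lambdak{k}\sigmaj{1}$, not by $\Lambdak{k}$, so the identity you assert fails whenever $\lambdakd{k}{d}(r)\neq\lambdakd{k+1}{d}(r)$; this is already visible in the paper's $d=2$ remark, where $\Eknm{k}{n}$ for $\mu=-1$ is $\operatorname{diag}(e^{-i(k+1)\theta},e^{-ik\theta})/\sqrt{2\pi}$. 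A careful write-up must therefore record the $\mu=-1$ identity with the swapped diagonal (equivalently, read \eqref{eq:Funk-Hecke Ekn} there with $\Lambdak{k}$ replaced by $\sigmaj{1}\Lambdak{k}\sigmaj{1}$) and then verify that this is harmless downstream: in Theorem \ref{thm:Dirac} the $\mu=-1$ components produce $2\sum_{\nu}\Qmn(r)\,\sigmaj{1}\Lambdak{k}(r)\sigmaj{1}\,\Qmn(r)$ instead of $\DiracLambdak{k}(r)$, which only flips the sign of the $(\lambdakd{k}{d}-\lambdakd{k+1}{d})$ term, so the largest eigenvalue is still $\Diraclambdak{k}(r)$ and only the eigenspace $\Wk{k}(r)$ for those components is exchanged between the $\pm\phim{m}(r)$ eigenspaces of $r\sigmaj{1}+m\sigmaj{3}$. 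Your sketch skips this point by asserting an ordering match that the stated definitions do not provide, and as written it would not deliver \eqref{eq:Funk-Hecke Ekn} for $\mu=-1$.
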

			\begin{remark}
		By the definition of $\Vk{k}$, we have
		\begin{equation}
			\begin{cases}
				\dim \Vk{0} = \dimN  , \\
				\dim \Vk{k+1} + \dim \Vk{k} = \dimN \dim \HPk{k+1}(\S^{d-1}) 
			\end{cases}
		\end{equation}
		for each $k \in \N$.
		On the other hand, it is well known that
		\begin{equation}
			\dim \HPk{k+1}(\S^{d-1}) = \binom{ k + d - 1 }{ d - 2 } + \binom{k +  d - 2 }{ d - 2 } 
		\end{equation}
		holds (see \cite[(2.10)]{AH2012}, for example).
		Comparing these, we conclude that
		\begin{equation}
			\dim \Vk{k} = \binom{ k + d - 2 }{ d - 2 } \dimN
		\end{equation}
		holds for any $k \in \N$. 
		Similarly, Lemma \ref{lem:Vkb} implies
		\begin{equation}
	\begin{cases}
		\dim \Vkm{k} = \dimN / 2  , \\
		\dim \Vkmm{k+1} + \dim \Vkm{k} = ( \dimN / 2) \dim \HPk{k+1}(\S^{d-1}) 
	\end{cases}
\end{equation}
for each $k \in \N$ and $\mu \in \{ 1, -1 \}$, which leads to 
\begin{equation}
\dim \Vkm{k} = \binom{ k + d - 2 }{ d - 2 } \dimN / 2 .
\end{equation}
In particular, in the case $d=2$, we have
\begin{equation}
	\dim \Vkm{k} = 1
\end{equation}
for any $k \in \N$ and $\mu \in \{ 1, -1 \}$.
	\end{remark}
	\begin{remark}
	In the case $d=2$, the most commonly used orthonormal basis $\{ \pkn{k}{n} \}_n$ of $\HPk{k}(\S^1)$ and gamma matrices $\{ \gammaj{j} \}_j$ are given by
	\begin{gather}
	\pkn{k}{n}(\cos \theta, \sin \theta) = \frac{1}{ \sqrt{2\pi} } e^{ikn\theta} , \quad n = \begin{cases}
	0, & k = 0,\\
	\pm 1, & k \geq 1 ,
	\end{cases}\\
	\gammaj{1} = \begin{pmatrix}
					0 & 1 \\
					1 & 0
					\end{pmatrix} , \quad
				\gammaj{2} = \begin{pmatrix}
			0 & -i \\
			i & 0
		\end{pmatrix} , \quad
					\gammaj{3} = \begin{pmatrix}
				1 & 0 \\
				0 & -1
			\end{pmatrix} ,
	\end{gather}
	respectively.
	In this case, we can choose an orthonormal basis $\{ \eknm{k}{n} \}_n$ of the space $\Vkm{k}$ given by
	\begin{equation}
	\eknm{k}{n} = \begin{cases}
	\begin{pmatrix}
		\pkn{k}{n} \\
		0
	\end{pmatrix} , & n = \mu = 1 , \\
	\begin{pmatrix}
		0 \\
		\pkn{k}{n} 
	\end{pmatrix} , & n = \mu = -1 ,
	\end{cases}
	\end{equation}
	which leads to 
	\begin{equation}
	\Eknm{k}{n} 
	= \begin{cases}
		\dfrac{1}{\sqrt{2\pi}}
	\begin{pmatrix}
	e^{i k \theta} & 0 \\
	0 & e^{i (k+1) \theta}
	\end{pmatrix} , & n = \mu = 1 , \\
	\dfrac{1}{\sqrt{2\pi}} \begin{pmatrix}
		e^{- i (k+1) \theta} & 0 \\
		0 & e^{- i k \theta}
	\end{pmatrix} , & n = \mu = - 1 .
	\end{cases} 
	\end{equation}
	
	\end{remark}
	\section{Proof of Theorem \ref{thm:Dirac intro}} \label{section:proof of thm:Dirac}
	Using Theorem \ref{thm:Ekn}, we prove Theorem \ref{thm:Dirac intro}.
	For convenience, we divide the proof into some steps as follows:
	\begin{theorem} \label{thm:Dirac}
		Let $\mu, \nu \in \{ 1, -1 \}$ and write
		\begin{gather}
			\Pn(\xi) \coloneqq \frac{1}{2} \mleft( \matrixIN{N} + \nu \frac{ \Diracphim{m}(\xi) }{ \phim{m}(\abs{\xi}) } \mright) , \\
			\Qmn(r) \coloneqq 
			\frac{1}{2} \mleft( \matrixIN{2} + \frac{\nu}{ \phim{m}(r) } ( r \sigmaj{1} + m \sigmaj{3} ) \mright) = \frac{1}{2} \begin{pmatrix}
				1 + \nu m \phim{m}(r)^{-1} & \nu r \phim{m}(r)^{-1} \\
				\nu r \phim{m}(r)^{-1} & 1 - \nu m \phim{m}(r)^{-1}
			\end{pmatrix} , \\
			\DiracLambdak{k}(r) \coloneqq 2 \sum_{ \nu \in \{ 1, -1 \} } \Qmn(r) \Lambdak{k}(r) \Qmn(r) .
		\end{gather}
		Note that $\Pn(\xi) \colon \C^\dimN \to \C^\dimN$ is an orthogonal projection onto the eigenspace of $\Diracphim{m}(\xi)$ associated with the eigenvalue $\nu \phim{m}(\abs{\xi})$.
		Then the following hold:
		\begin{eqenumerate}
			\item \label{item:S^*Sf}
			For any $f \in L^2(\R^d, \C^\dimN)$, we have
			\begin{equation}
				\adjoint{\DiracS} \DiracS f(\xi) 
				= 2 \pi \abs{\xi}^{d-2} (\psi(\abs{\xi}))^2 \sum_{ \nu \in \{ 1, -1 \} } \Pn(\xi) \int_{ \eta \in \S^{d-1} } \Fwd{d}( \abs{\xi} \sqrt{2(1 - (\xi/\abs{\xi}) \cdot \eta)} ) \Pn(\abs{\xi} \eta) f(\abs{\xi}\eta) \, d\sigma(\eta) .
			\end{equation}
			\item \label{item:PE=EQ} 
			We have
			\begin{equation}
				\Pn(\xi) \Eknm{k}{n}(\xi / \abs{\xi} ) 
				= \Eknm{k}{n}(\xi / \abs{\xi} ) 
				\Qmn(\abs{\xi}) .
			\end{equation}
			\item \label{item:S^*Sf decomposed}
			Let $f \in L^2(\R^d, \C^\dimN)$ and decompose it as
			\begin{align}
				f(\xi) &=  \abs{\xi}^{-(d-1)/2} \sum_{\mu \in \{ 1, -1 \}} \sumk \sumn \Eknm{k}{n}(\xi / \abs{\xi}) \fknm{k}{n}(\abs{\xi}) .
				\intertext{Then we have}
				\adjoint{\DiracS} \DiracS f(\xi) 
				&= 2 \pi \abs{\xi}^{-(d-1)/2} \sum_{\mu \in \{1, -1\}} \sumk \sumn \Eknm{k}{n}( \xi / \abs{\xi} ) \DiracLambdak{k}(r) \fknm{k}{n}(\abs{\xi}) , \\
				\intertext{and so that}
				\norm{\DiracS f}_{L^2(\R^{d+1}, \C^\dimN)}^2
				&= 
				2 \pi \sum_{\mu \in \{1, -1\}} \sumk \sumn \int_0^\infty \innerproduct{ \DiracLambdak{k}(r) \fknm{k}{n}(r) }{ \fknm{k}{n}(r) }_{\C^2} \, dr .
			\end{align}
			\item \label{item:maximal eigenvalue}
			The maximal eigenvalue of $\DiracLambdak{k}(r)$ and its associated eigenspace are
			\begin{equation} \tag*{\eqref{eq:Diraclambdak}}
				\Diraclambdak{k}(r)
				= \lambdakd{k}{d}(r) + \lambdakd{k+1}{d}(r) + \frac{m}{\phim{m}(r)} \abs{ \lambdakd{k}{d}(r) - \lambdakd{k+1}{d}(r) } 
			\end{equation}
			and
			\begin{equation}
				\Wk{k}(r) 
				= 
				\begin{dcases}
					\C^2 , & m ( \lambdakd{k}{d}(r) - \lambdakd{k+1}{d}(r) ) = 0 , \\
					\text{the eigenspace of $ r \sigmaj{1} + m \sigmaj{3}$ associated with $\phim{m}(r)$}, 
					& m ( \lambdakd{k}{d}(r) - \lambdakd{k+1}{d}(r) )  > 0 , \\
					\text{the eigenspace of $ r \sigmaj{1} + m \sigmaj{3}$ associated with $- \phim{m}(r)$},  
					& m ( \lambdakd{k}{d}(r) - \lambdakd{k+1}{d}(r) )  < 0 ,
				\end{dcases}
			\end{equation}
			respectively.
			\item \label{item:operator norm}
			We have 
			\begin{equation}
				\norm{\DiracS}_{L^2(\R^d, \C^\dimN) \to L^2(\R^{d+1}, \C^\dimN)}^2 = 2 \pi \Diraclambdasup = 2 \pi \sup_{k \in \N} \esssup_{r > 0} \Diraclambdak{k}(r) .
			\end{equation}
			Regarding extremisers, let $f \in L^2(\R^d, \C^\N)$ be such that
			\begin{equation}
				f(\xi) =  \abs{\xi}^{-(d-1)/2} \sum_{\mu \in \{ 1, -1 \}} \sumk \sumn \Eknm{k}{n}(\xi / \abs{\xi}) \fknm{k}{n}(\abs{\xi}) .
			\end{equation}
			Then the equality $\norm{\DiracS f}_{L^2(\R^{d+1}, \C^\dimN)}^2 = 2 \pi \Diraclambdasup \norm{f}_{L^2(\R^d, \C^\dimN)}^2$ holds if and only if $\fknm{k}{n}$ satisfies
			\begin{enumerate}[label={$\textup{(\ref{item:operator norm}.\roman*)}$}]
				\item \label{item:support condition}
				$\supp{ \fknm{k}{n} } \subset \set{ r > 0 }{ \Diraclambdak{k}(r) \geq \Diraclambdasup }$,
				\item \label{item:eigenspace condition}
				$\fknm{k}{n}(r) \in \Wk{k}(r)$ for almost every $r > 0$,
			\end{enumerate}
			for each $k$, $n$ and $\mu$.
			As a consequence, extremisers exist if and only if there exists $k \in \N$ such that the Lebesgue measure of $\set{ r > 0 }{ \Diraclambdak{k}(r) = \Diraclambdasup }$ is non-zero.
		\end{eqenumerate}
	\end{theorem}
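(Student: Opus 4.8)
The plan is to establish items~\eqref{item:S^*Sf}--\eqref{item:operator norm} in order; Theorem~\ref{thm:Dirac intro} is then item~\eqref{item:operator norm} read through the identity $\norm{\DiracS}^2 = (2\pi)^d\Diracconstwpmd{w}{\psi}{m}{d}$. For item~\eqref{item:S^*Sf} I would carry out the standard $TT^*$ computation. Expanding $\innerproduct{\DiracS f}{\DiracS g}$ over $L^2(\R^{d+1},\C^\dimN)$, the $x$-integral gives $\int_{\R^d}w(\abs{x})e^{ix\cdot(\eta-\xi)}\,dx = \Fwd{d}(\abs{\xi-\eta})$; inserting the spectral decomposition $e^{-it\Diracphim{m}(\xi)} = \sum_{\nu}e^{-it\nu\phim{m}(\abs{\xi})}\Pn(\xi)$ (and its adjoint at $\eta$) and performing the $t$-integral produces $2\pi$ times a Dirac mass which, since $\phim{m}>0$, forces both $\nu'=\nu$ and $\abs{\eta}=\abs{\xi}$. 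Writing $\eta=\rho\theta$ in polar coordinates and using $\tfrac{d}{d\rho}\phim{m}(\rho)=\rho/\phim{m}(\rho)$, the radial Dirac mass collapses $\rho$ to $\abs{\xi}$ with Jacobian $\phim{m}(\abs{\xi})/\abs{\xi}$; collecting the remaining powers of $\abs{\xi}$, $\phim{m}(\abs{\xi})$ and $\psi(\abs{\xi})$ and using $\abs{r\omega-r\theta}^2=2r^2(1-\omega\cdot\theta)$ gives the stated formula. The mild regularity imposed on $w,\psi$ in the remarks is exactly what legitimises the interchanges of integration and the distributional step here.

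Items~\eqref{item:PE=EQ} and~\eqref{item:S^*Sf decomposed} follow by feeding Theorem~\ref{thm:Ekn} into~\eqref{item:S^*Sf}. From $\Diracphim{m}(\xi) = \abs{\xi}\Diracphim{0}(\xi/\abs{\xi}) + m\gammaj{d+1}$ and the intertwining relations~\eqref{eq:Ekn sigma_1},~\eqref{eq:Ekn sigma_3} one obtains $\Diracphim{m}(\xi)\Eknm{k}{n}(\xi/\abs{\xi}) = \Eknm{k}{n}(\xi/\abs{\xi})(\abs{\xi}\sigmaj{1}+m\sigmaj{3})$, and dividing by $\phim{m}(\abs{\xi})$ and averaging over the two signs gives $\Pn(\xi)\Eknm{k}{n}(\xi/\abs{\xi}) = \Eknm{k}{n}(\xi/\abs{\xi})\Qmn(\abs{\xi})$, which is~\eqref{item:PE=EQ}. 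Substituting the decomposition of $f$ into~\eqref{item:S^*Sf}, moving $\Qmn(\abs{\xi})$ through the $\eta$-integral via~\eqref{item:PE=EQ}, and applying the Funk--Hecke identity~\eqref{eq:Funk-Hecke Ekn} termwise turns $\adjoint{\DiracS}\DiracS f(\xi)$ into $2\pi\abs{\xi}^{-(d-1)/2}\sum_{\mu,k,n}\Eknm{k}{n}(\xi/\abs{\xi})\bigl(2\sum_{\nu}\Qmn(\abs{\xi})\Lambdak{k}(\abs{\xi})\Qmn(\abs{\xi})\bigr)\fknm{k}{n}(\abs{\xi})$, the matrix in parentheses being $\DiracLambdak{k}(\abs{\xi})$ by definition. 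Pairing this against $f$ and using that the columns of the $\Eknm{k}{n}$ form an orthonormal basis of $L^2(\S^{d-1},\C^\dimN)$ (Lemma~\ref{lem:Vk}) yields the norm identity; the exchange of the summation with the $r$-integral is harmless because $\DiracLambdak{k}(r) = \Lambdak{k}(r) + M(r)\Lambdak{k}(r)M(r)\geq 0$, where $M(r)\coloneqq\phim{m}(r)^{-1}(r\sigmaj{1}+m\sigmaj{3})$ and $\Lambdak{k}(r)\geq 0$.

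For item~\eqref{item:maximal eigenvalue} I would use that $\sigmaj{1}^2=\sigmaj{3}^2=\matrixIN{2}$ and $\sigmaj{1}\sigmaj{3}+\sigmaj{3}\sigmaj{1}=0$ make $M(r)$ an involution; from $\DiracLambdak{k}(r) = \Lambdak{k}(r) + M(r)\Lambdak{k}(r)M(r)$ and $M(r)^2=\matrixIN{2}$ one reads off $[\DiracLambdak{k}(r),M(r)]=0$, so $\DiracLambdak{k}(r)$ is diagonal in the (distinct-eigenvalue) eigenbasis of $M(r)$, i.e.\ of $r\sigmaj{1}+m\sigmaj{3}$. Computing $\operatorname{tr}\DiracLambdak{k}(r)=2(\lambdakd{k}{d}(r)+\lambdakd{k+1}{d}(r))$ and $\det\DiracLambdak{k}(r)$ gives the two eigenvalues $\lambdakd{k}{d}(r)+\lambdakd{k+1}{d}(r)\pm\frac{m}{\phim{m}(r)}\abs{\lambdakd{k}{d}(r)-\lambdakd{k+1}{d}(r)}$, the larger being $\Diraclambdak{k}(r)$; evaluating the quadratic form of $\DiracLambdak{k}(r)$ on the $\pm\phim{m}(r)$-eigenvectors of $r\sigmaj{1}+m\sigmaj{3}$ shows which eigenvector realises the maximum, according to the sign of $m(\lambdakd{k}{d}(r)-\lambdakd{k+1}{d}(r))$, while when that quantity vanishes $\DiracLambdak{k}(r)$ is a scalar multiple of $\matrixIN{2}$ and $\Wk{k}(r)=\C^2$.

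Finally, item~\eqref{item:operator norm} combines~\eqref{item:S^*Sf decomposed} with~\eqref{item:maximal eigenvalue}: since $\innerproduct{\DiracLambdak{k}(r)v}{v}_{\C^2}\leq\Diraclambdak{k}(r)\abs{v}^2\leq\Diraclambdasup\abs{v}^2$, the norm identity yields $\norm{\DiracS f}^2\leq 2\pi\Diraclambdasup\norm{f}^2$, hence $\norm{\DiracS}^2\leq 2\pi\Diraclambdasup$; for the reverse inequality, given $\varepsilon>0$ I pick $k$ and a finite positive-measure set on which $\Diraclambdak{k}(r)>\Diraclambdasup-\varepsilon$ (or $>1/\varepsilon$ if $\Diraclambdasup=\infty$), shrink it so that the sign of $m(\lambdakd{k}{d}-\lambdakd{k+1}{d})$ is constant there, and take $f$ with a single nonzero radial profile $\fknm{k}{n}(r)$ equal to a unit vector of $\Wk{k}(r)$ chosen continuously in $r$, giving $\norm{\DiracS f}^2\geq 2\pi(\Diraclambdasup-\varepsilon)\norm{f}^2$. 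Tracking the two equality cases above shows that $\norm{\DiracS f}^2 = 2\pi\Diraclambdasup\norm{f}^2$ holds precisely when every profile obeys $\supp\fknm{k}{n}\subset\set{r>0}{\Diraclambdak{k}(r)\geq\Diraclambdasup}$ and $\fknm{k}{n}(r)\in\Wk{k}(r)$ for almost every $r$, i.e.\ conditions~\ref{item:support condition} and~\ref{item:eigenspace condition}, and the existence dichotomy is then immediate: the support condition forces $f=0$ unless some $\set{r>0}{\Diraclambdak{k}(r)=\Diraclambdasup}$ has positive measure, in which case the construction just described produces a nonzero extremiser. I expect the genuine obstacle to be item~\eqref{item:S^*Sf}---making the $TT^*$ reduction rigorous under only weak hypotheses on $w$ and $\psi$---together with the bookkeeping in~\eqref{item:S^*Sf decomposed} needed to justify the termwise Funk--Hecke step and the interchange of the infinite sum with the $\eta$- and $r$-integrations; everything downstream is finite-dimensional linear algebra and a routine extremiser argument.
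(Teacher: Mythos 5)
Your proposal is correct and follows essentially the same route as the paper: the $TT^*$ computation with the spectral projections $\Pn$ and the collapse of the $t$- and radial integrals, the intertwining identity $\Pn(\xi)\Eknm{k}{n} = \Eknm{k}{n}\Qmn(\abs{\xi})$ from Theorem \ref{thm:Ekn}, the termwise Funk--Hecke reduction to the $2\times 2$ blocks $\DiracLambdak{k}$, and the near-extremiser/equality analysis for \eqref{item:operator norm}. The only cosmetic difference is in \eqref{item:maximal eigenvalue}, where you diagonalise via $[\DiracLambdak{k}, M]=0$ and trace/determinant, while the paper derives the explicit identity $\DiracLambdak{k} = (\lambdakd{k}{d}+\lambdakd{k+1}{d})\matrixIN{2} + \frac{m}{\phim{m}^2}(\lambdakd{k}{d}-\lambdakd{k+1}{d})(r\sigmaj{1}+m\sigmaj{3})$; the two computations are equivalent.
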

	\begin{remark} \label{remark:Schrodinger}
		The proof of Theorem \ref{thm:Schrodinger} given by \citet*{BSS2015} is as follows.
		Define a linear operator $\SchrodingerS \colon L^2(\R^d) \to L^2(\R^{d+1})$ by 
		\begin{align}
			(\SchrodingerS f)(x,t) &\coloneqq w(\abs{x})^{1/2} \int_{\xi \in \R^d} e^{ix \cdot \xi}\psi(\abs{\xi})e^{-it \abs{\xi}^2 } f(\xi)\, d\xi ,
			\intertext{and so that} 
			\Sconstwpd{ w }{ \psi }{d} &= (2\pi)^{d} \norm{\SchrodingerS}_{L^2(\R^d) \to L^2(\R^{d+1})}^2 .
		\end{align}
		\begin{eqenumerate}
			\item \label{item:S^*Sf Schrodinger}
			For any $f \in L^2(\R^d)$, we have
			\begin{equation}
				\adjoint{\SchrodingerS} \SchrodingerS f(\xi) 
				= 
				2 \pi \abs{\xi}^{d-2} ( \psi(\abs{\xi}) )^2  \int_{ \eta \in \S^{d-1} } \Fwd{d}( \abs{\xi} \sqrt{2(1 - (\xi/\abs{\xi}) \cdot \eta)} )  f(\abs{\xi}\eta) \, d\sigma(\eta) .
			\end{equation}
			\item \label{item:S^*Sf decomposed Schrodinger}
			Let $f \in L^2(\R^d)$ and decompose it as
			\begin{align}
				f(\xi) &= \abs{\xi}^{-(d-1)/2} \sumk \sumn \ekn{k}{n}(\xi / \abs{\xi}) \fkn{k}{n}(\abs{\xi}) .
				\intertext{Then we have}
				\adjoint{\SchrodingerS} \SchrodingerS f(\xi) 
				&= 2 \pi \abs{\xi}^{-(d-1)/2}
				\sumk \sumn  \ekn{k}{n}(\xi / \abs{\xi}) \lambdakd{k}{d}(\abs{\xi}) \fkn{k}{n}(\abs{\xi}) ,
				\\
				\intertext{and so that}
				\norm{\SchrodingerS f}_{L^2(\R^{d+1})}^2 &= 
				2 \pi \sumk \sumn \int_0^\infty  \lambdakd{k}{d}(r) \abs{\fkn{k}{n}(r) }^2 \, dr .
			\end{align}
			\item \label{item:operator norm Schrodinger}
			We have 
			\begin{equation}
				\norm{\SchrodingerS}_{L^2(\R^d) \to L^2(\R^{d+1})}^2 = 2 \pi \lambdasupd{d} = 2 \pi \sup_{k \in \N} \esssup_{r > 0} \lambdakd{k}{d}(r) .
			\end{equation}
			Regarding extremisers, let $f \in L^2(\R^d)$ be such that
			\begin{equation}
				f(\xi) = \abs{\xi}^{-(d-1)/2} \sumk \sumn \ekn{k}{n}(\xi / \abs{\xi}) \fkn{k}{n}(\abs{\xi}) .
			\end{equation}
			Then the equality $\norm{\SchrodingerS f}_{L^2(\R^{d+1})}^2 = 2 \pi \lambdasupd{d} \norm{f}_{L^2(\R^d)}^2$ holds if and only if $\fkn{k}{n}$ satisfies
			\begin{enumerate}[label={$\textup{(\ref{item:operator norm Schrodinger}.\roman*)}$}]
				\item \label{item:support condition Schrodiger}
				$\supp{ \fkn{k}{n} } \subset \set{ r > 0 }{ \lambdakd{k}{d}(r) \geq \lambdasupd{d} }$
			\end{enumerate}
			for each $k$ and $n$.
			As a consequence, extremisers exist if and only if there exists $k \in \N$ such that the Lebesgue measure of $\set{ r > 0 }{ \lambdakd{k}{d}(r) = \lambdasupd{d} }$ is non-zero.
		\end{eqenumerate}
		We see that \eqref{item:S^*Sf}, \eqref{item:S^*Sf decomposed}, \eqref{item:operator norm} and \ref{item:support condition} in Theorem \ref{thm:Dirac} are analogues of \eqref{item:S^*Sf Schrodinger}, \eqref{item:S^*Sf decomposed Schrodinger}, \eqref{item:operator norm Schrodinger} and \ref{item:support condition Schrodiger}, respectively.
		We also remark that in the case of the Dirac equation with positive mass $m > 0$, extremisers must satisfy the condition \ref{item:eigenspace condition} in addition to \ref{item:support condition}.
	\end{remark}
	\begin{proof}[Proof of \eqref{item:S^*Sf}]
		Using
		\begin{gather}
			\matrixIN{N} = \sum_{ \nu \in \{ 1, -1 \} }  \Pn(\xi) , 
			\\
			\Diracphim{m}(\xi) \Pn(\xi) = \nu \phim{m}(\abs{\xi}) \Pn(\xi) ,
		\end{gather}
		we get
		\begin{align}
			\exp( i t \Diracphim{m}(\xi) )
			= \sum_{ \nu \in \{ 1, -1 \} }  \exp( i \nu t \phim{m}(\abs{\xi}) ) \Pn(\xi) 
		\end{align}
		and so that 
		\begin{align}
			&\quad \exp( i t \Diracphim{m}(\xi) ) \exp( - i t \Diracphim{m}(\eta) ) \\
			&= \Bigg( \sum_{ \nu_1 \in \{ 1, -1 \} }  \exp( i \nu_1 t \phim{m}(\abs{\xi}) ) P_{\nu_1}(\xi) \Bigg) \Bigg( \sum_{ \nu_2 \in \{ 1, -1 \} }  \exp( - i \nu_2 t \phim{m}(\abs{\eta}) ) P_{\nu_2}(\eta) \Bigg) \\
			&= \sum_{ \nu_1, \nu_2 \in \{ 1, -1 \} } \exp( i t ( \nu_1 \phim{m}(\abs{\xi}) - \nu_2 \phim{m}(\abs{\eta}) ) ) P_{\nu_1}(\xi) P_{\nu_2}(\eta)  . 
		\end{align}
		Hence, we obtain
		\begin{equation}
			\int_{t \in \R} \exp( i t \Diracphim{m}(\xi) ) \exp( - i t \Diracphim{m}(\eta) ) \, dt 
			= 2 \pi \delta( \phim{m}(\abs{\xi}) - \phim{m}(\abs{\eta}) ) \sum_{ \nu \in \{ 1, -1 \} } \Pn(\xi) \Pn(\eta) , 
		\end{equation} 
		or precisely
		\begin{align}
			&\quad \int_{\eta \in \R^d} \int_{t \in \R} \exp( i t \Diracphim{m}(\xi) ) \exp( - i t \Diracphim{m}(\eta) ) f(\eta) \, dt \, d\eta \\
			&= 2 \pi \abs{\xi}^{d-2} \phim{m}(\abs{\xi}) \sum_{ \nu \in \{ 1, -1 \} } \int_{ \eta \in \S^{d-1} } \Pn(\xi) \Pn(\abs{\xi} \eta)  f(\abs{\xi} \eta) \, d\sigma(\eta) .
		\end{align}
		Therefore, we conclude that
		\begin{align}
			&\adjoint{\DiracS} \DiracS f(\xi) \\
			&= \int_{t \in \R} \int_{x \in \R^d} w(\abs{x})^{1/2} e^{-ix \cdot \xi} \phim{m}(\abs{\xi})^{-1/2} \psi(\abs{\xi}) e^{it \Diracphim{m}(\xi)} \DiracS f(x, t) \, dx \, dt \\
			&= \phim{m}(\abs{\xi})^{-1/2} \psi(\abs{\xi}) \\
			&\times \int_{t \in \R} \int_{x \in \R^d} w(\abs{x})^{1/2} e^{-ix \cdot \xi} e^{it \Diracphim{m}(\xi)} \mleft( w(\abs{x})^{1/2} \int_{ \eta \in \R^d } e^{ix \cdot \eta} \phim{m}(\abs{\eta})^{-1/2} \psi(\abs{\eta}) e^{-it \Diracphim{m}(\eta)} f(\eta) \, d \eta \mright) \, dx \, dt \\
			&= \phim{m}(\abs{\xi})^{-1/2} \psi(\abs{\xi}) \\
			&\times \int_{ \eta \in \R^d }  \mleft(\int_{x \in \R^d} w(\abs{x}) e^{- ix \cdot (\xi - \eta)} \, dx \mright) \mleft( \int_{t \in \R} \exp( i t \Diracphim{m}(\xi) ) \exp( - i t \Diracphim{m}(\eta) ) \, dt  \mright) \, \phim{m}(\abs{\eta})^{-1/2} \psi(\abs{\eta}) f(\eta) \, d\eta \\
			&= 2 \pi \phim{m}(\abs{\xi})^{-1/2} \psi(\abs{\xi}) \\
			&\times \sum_{ \nu \in \{ 1, -1 \} }  \int_{ \eta \in \R^d } \Fwd{d}( \abs{ \xi - \eta } ) \delta( \phim{m}(\abs{\xi}) - \phim{m}(\abs{\eta}) ) \Pn(\xi) \Pn(\eta) \phim{m}(\abs{\eta})^{-1/2} \psi(\abs{\eta}) f(\eta) \, d\eta \\
			&= 2 \pi \abs{\xi}^{d-2} \phim{m}(\abs{\xi})^{1/2} \psi(\abs{\xi}) \sum_{ \nu \in \{ 1, -1 \} } \int_{ \eta \in \S^{d-1} } \Fwd{d}( \abs{ \xi -  \abs{\xi} \eta } ) \Pn(\xi) \Pn(\abs{\xi} \eta) \phim{m}(\abs{\xi})^{-1/2} \psi(\abs{\xi}) f(\abs{\xi}\eta) \, d\sigma(\eta) \\
			&= 2 \pi \abs{\xi}^{d-2} (\psi(\abs{\xi}))^2 \sum_{ \nu \in \{ 1, -1 \} } \Pn(\xi) \int_{ \eta \in \S^{d-1} } \Fwd{d}( \abs{\xi} \sqrt{2(1 - (\xi/\abs{\xi}) \cdot \eta)} ) \Pn(\abs{\xi} \eta) f(\abs{\xi}\eta) \, d\sigma(\eta)
		\end{align}
		holds.
	\end{proof}
	\begin{proof}[Proof of \eqref{item:PE=EQ}]
		By \eqref{item:transformation matrix} of Theorem \ref{thm:Ekn}, we have
		\begin{align}
			\Pn(\xi) \Eknm{k}{n}(\xi / \abs{\xi}) 
			&= \frac{1}{2} \mleft( \matrixIN{N} + \nu \frac{ \Diracphim{m}(\xi) }{ \phim{m}(\abs{\xi}) } \mright) \Eknm{k}{n}(\xi / \abs{\xi}) \\
			&= \frac{1}{2} \mleft( \matrixIN{N} + \frac{ \nu }{ \phim{m}(\abs{\xi}) } \mleft( \abs{\xi} \Diracphim{0}(\xi / \abs{\xi}) + m \gammaj{d+1} \mright) \mright) \Eknm{k}{n}(\xi / \abs{\xi}) \\
			\underset{\eqref{eq:Ekn sigma_1},\eqref{eq:Ekn sigma_3}}&{=} \frac{1}{2} \Eknm{k}{n}(\xi / \abs{\xi}) \mleft( \matrixIN{2} + \frac{ \nu }{ \phim{m}(\abs{\xi}) } \mleft( \abs{\xi} \sigmaj{1} + m \sigmaj{3} \mright) \mright) \\
			&= \Eknm{k}{n}(\xi / \abs{\xi}) \Qmn(\abs{\xi}) . \quad \qedhere
		\end{align}
	\end{proof}
	\begin{proof}[Proof of \eqref{item:S^*Sf decomposed}]
		Using \eqref{eq:Funk-Hecke Ekn} and \eqref{item:PE=EQ}, we have
		\begin{align}
			&\quad \frac{1}{2} \abs{\xi}^{d-2}  (\psi(\abs{\xi}) )^2 \int_{ \eta \in \S^{d-1} } \Fwd{d}( \abs{\xi} \sqrt{2(1 - (\xi/\abs{\xi}) \cdot \eta)} ) \Pn(\abs{\xi} \eta) f(\abs{\xi}\eta) \, d\sigma(\eta)\\
			&= \frac{1}{2} \abs{\xi}^{-(d-1)/2} \abs{\xi}^{d-2}  (\psi(\abs{\xi}) )^2 \sum_{\mu \in \{1, -1\}} \sumk \sumn \int_{ \eta \in \S^{d-1} } \Fwd{d}( \abs{\xi} \sqrt{2(1 - (\xi/\abs{\xi}) \cdot \eta)} ) \Pn(\abs{\xi} \eta) \Eknm{k}{n}(\eta) \fknm{k}{n}(\abs{\xi}) \, d\sigma(\eta)\\
			\underset{\eqref{item:PE=EQ}}&{=}  \abs{\xi}^{-(d-1)/2} \sum_{\mu \in \{1, -1\}} \sumk \sumn \mleft( \frac{1}{2}  \abs{\xi}^{d-2}  (\psi(\abs{\xi}) )^2 \int_{ \eta \in \S^{d-1} } \Fwd{d}( \abs{\xi} \sqrt{2(1 - (\xi/\abs{\xi}) \cdot \eta)} ) \Eknm{k}{n}(\eta) \, d\sigma(\eta) \mright) \,\Qmn(\abs{\xi}) \fknm{k}{n}(\abs{\xi}) \\
			\underset{\eqref{eq:Funk-Hecke Ekn}}&{=} \abs{\xi}^{-(d-1)/2} \sum_{\mu \in \{1, -1\}} \sumk \sumn \Eknm{k}{n}(\xi / \abs{\xi}) \Lambdak{k}(\abs{\xi}) \Qmn(\abs{\xi}) \fknm{k}{n}(\abs{\xi}) .
		\end{align}
		Substituting this into \eqref{item:S^*Sf}, we get
		\begin{align}
			&\quad \adjoint{\DiracS} \DiracS f(\xi) \\
			&= 4 \pi \abs{\xi}^{-(d-1)/2} \sum_{ \nu \in \{ 1, -1 \} } \Pn(\xi) \sum_{\mu \in \{1, -1\}} \sumk \sumn \Eknm{k}{n}(\xi / \abs{\xi}) \Lambdak{k}(\abs{\xi}) \Qmn(\abs{\xi}) \fknm{k}{n}(\abs{\xi})  \\
			\underset{\eqref{item:PE=EQ}}&= 4 \pi \abs{\xi}^{-(d-1)/2} \sum_{ \nu \in \{ 1, -1 \} } \sum_{\mu \in \{1, -1\}} \sumk \sumn \Eknm{k}{n}(\xi / \abs{\xi}) \Qmn(\abs{\xi}) \Lambdak{k}(\abs{\xi}) \Qmn(\abs{\xi}) \fknm{k}{n}(\abs{\xi})  \\
			&= 2 \pi \abs{\xi}^{-(d-1)/2}
			\sum_{\mu \in \{1, -1\}} \sumk \sumn \Eknm{k}{n}(\xi / \abs{\xi}) \DiracLambdak{k}(\abs{\xi}) \fknm{k}{n}(\abs{\xi})  .
			\quad \qedhere
		\end{align}
	\end{proof}
	\begin{proof}[Proof of \eqref{item:maximal eigenvalue}]
		Note that we have the following identities:
		\begin{align}
			\sigmaj{3} \Lambdak{k} \sigmaj{3} & =  \Lambdak{k} , \\
			\sigmaj{3} \Lambdak{k} \sigmaj{1} + \sigmaj{1} \Lambdak{k} \sigmaj{3} &= ( \lambdakd{k}{d} - \lambdakd{k+1}{d} ) \sigmaj{1} , \\
			\Lambdak{k} + \sigmaj{1} \Lambdak{k} \sigmaj{1} &= ( \lambdakd{k}{d} + \lambdakd{k+1}{d} ) \matrixIN{2} , \\
			\Lambdak{k} - \sigmaj{1} \Lambdak{k} \sigmaj{1} &= ( \lambdakd{k}{d} - \lambdakd{k+1}{d} ) \sigmaj{3} .
		\end{align}
		Using these, we obtain
		\begin{align}
			\DiracLambdak{k}(r) &= 2 \sum_{ \nu \in \{ 1, -1 \} } \Qmn(r) \Lambdak{k}(r) \Qmn(r) \\
			&= \Lambdak{k} + \frac{1}{ \phim{m}^2 } ( r \sigmaj{1} +  m \sigmaj{3} ) \Lambdak{k} ( r \sigmaj{1} +  m \sigmaj{3} ) \\
			&= \Lambdak{k} + \frac{1}{ \phim{m}^2 } ( r^2 \sigmaj{1} \Lambdak{k} \sigmaj{1} +  m r ( \sigmaj{3} \Lambdak{k} \sigmaj{1} + \sigmaj{1} \Lambdak{k} \sigmaj{3} ) + m^2 \sigmaj{3} \Lambdak{k} \sigmaj{3} ) \\
			&= \Lambdak{k} + \sigmaj{1} \Lambdak{k} \sigmaj{1} + \frac{1}{ \phim{m}^2 } (  m r ( \lambdakd{k}{d} - \lambdakd{k+1}{d} ) \sigmaj{1} + m^2 ( \Lambdak{k} - \sigmaj{1} \Lambdak{k} \sigmaj{1} ) ) \\
			&= ( \lambdakd{k}{d} + \lambdakd{k+1}{d} ) \matrixIN{2} + \frac{m}{ \phim{m}^2 } ( \lambdakd{k}{d} - \lambdakd{k+1}{d} ) (  r \sigmaj{1} + m \sigmaj{3}  ) .
		\end{align}
		Now notice that the eigenvalues of $ r \sigmaj{1} + m \sigmaj{3}$ are $\pm \phim{m}(r)$. 
		Therefore, the maximal eigenvalue of $\DiracLambdak{k}(r)$ and its associated eigenspace are
		\begin{equation} \tag*{\eqref{eq:Diraclambdak}}
			\Diraclambdak{k}(r)
			= \lambdakd{k}{d}(r) + \lambdakd{k+1}{d}(r) + \frac{m}{\phim{m}(r)} \abs{ \lambdakd{k}{d}(r) - \lambdakd{k+1}{d}(r) } 
		\end{equation}
		and
		\begin{align}
			&\quad \Wk{k} (r) \\
			&= 
			\begin{dcases}
				\C^2, & m ( \lambdakd{k}{d}(r) - \lambdakd{k+1}{d}(r) ) = 0 , \\
				\text{the eigenspace of $ r \sigmaj{1} + m \sigmaj{3}$ associated with $\phim{m}(r)$}, 
				& m ( \lambdakd{k}{d}(r) - \lambdakd{k+1}{d}(r) )  > 0 , \\
				\text{the eigenspace of $ r \sigmaj{1} + m \sigmaj{3}$ associated with $- \phim{m}(r)$},  
				& m ( \lambdakd{k}{d}(r) - \lambdakd{k+1}{d}(r) )  < 0 ,
			\end{dcases}
		\end{align}
		respectively.
	\end{proof}
	\begin{proof}[Proof of \eqref{item:operator norm}]
		For simplicity, we write
		\begin{equation}
			\norm{\DiracS} \coloneqq \norm{\DiracS}_{L^2(\R^d, \C^\dimN) \to L^2(\R^{d+1}, \C^\dimN)} .
		\end{equation}
		By \eqref{item:S^*Sf decomposed} and \eqref{item:maximal eigenvalue}, it is obvious that
		\begin{align}
			\norm{\DiracS f}_{L^2(\R^{d+1}, \C^\dimN)}^2 
			\underset{\eqref{item:S^*Sf}}&{=} 2 \pi \sum_{\mu \in \{1, -1\}} \sumk \sumn \int_0^\infty \innerproduct{ \DiracLambdak{k}(r) \fknm{k}{n}(r) }{ \fknm{k}{n}(r) }_{\C^2} \, dr \\
			\underset{\eqref{item:maximal eigenvalue}}&\leq 2 \pi \sum_{\mu \in \{1, -1\}} \sumk \sumn \int_0^\infty \Diraclambdak{k}(r) \abs{ \fknm{k}{n}(r) }^2 \, dr  \\
			&\leq 2 \pi \Diraclambdasup \norm{f}_{L^2(\R^d, \C^\dimN)}^2 ,
		\end{align}
		hence
		\begin{equation}
			\norm{\DiracS}^2 \leq 2 \pi \Diraclambdasup .
		\end{equation}
		
		To see the equality $\norm{\DiracS}^2 = 2 \pi \Diraclambdasup$, 
		we will show that for any $\varepsilon > 0$, 
		there exists $f \in L^2(\R^d, \C^\dimN) \setminus \{0\}$ 
		such that $\norm{\DiracS f}^{2}_{L^2(\R^{d+1}, \C^\dimN)} \geq 2 \pi ( \Diraclambdasup - \varepsilon ) \norm{f}_{L^2(\R^d, \C^\dimN)}^2$.
		Fix $\varepsilon > 0$. 
		Then, by the definition of $\Diraclambdasup$, 
		we can choose $k_0 \in \N$ such that the Lebesgue measure of
		\begin{equation}
			\set{r > 0}{ \Diraclambdak{k_0}(r) \geq \Diraclambdasup - \varepsilon }	
		\end{equation}
		is non-zero (possibly infinite). 
		Now suppose that
		\begin{eqenumerate}
			\item \label{item:k0 only}
			$\fknm{k}{n} = 0$ if $k \ne k_0$.
			\item \label{item:support condition epsilon}
			$\supp{ \fknm{k_0}{n} } \subset \set{r > 0}{ \Diraclambdak{k_0}(r) \geq \Diraclambdasup - \varepsilon }	$,
			\item \label{item:eigenspace condition k0}
			$\fknm{k_0}{n}(r) \in \Wk{k_0}(r)$ for almost every $r > 0$,
		\end{eqenumerate}
		Then we have
		\begin{align}
			\norm{\DiracS f}_{L^2(\R^{d+1}, \C^\dimN)}^2 
			\underset{\eqref{item:k0 only}}&{=} 2 \pi \sum_{\mu \in \{1, -1\}} \sumn \int_0^\infty \innerproduct{ \DiracLambdak{k_0}(r) \fknm{k_0}{n}(r) }{ \fknm{k_0}{n}(r) }_{\C^\dimN} \, dr \\
			\underset{\eqref{item:eigenspace condition k0}}&{=} 2 \pi \sum_{\mu \in \{1, -1\}} \sumn \int_0^\infty \Diraclambdak{k_0}(r) \abs{ \fknm{k_0}{n}(r) }^2 \, dr  \\
			\underset{\eqref{item:support condition epsilon}}&\geq 2 \pi (\Diraclambdasup - \varepsilon) \norm{f}_{L^2(\R^d, \C^\dimN)}^2 ,
		\end{align}
		thus the equality $\norm{\DiracS}^2 = 2 \pi \Diraclambdasup$ holds.
		
		Using a similar argument, we can show that if
		\begin{eqenumerate}
			\item[\ref{item:support condition}]
			$\supp{ \fknm{k}{n} } \subset \set{r > 0}{ \Diraclambdak{k}(r) \geq \Diraclambdasup }$,
			\item[\ref{item:eigenspace condition}]
			$\fknm{k}{n}(r) \in \Wk{k}(r)$ for almost every $r > 0$, 
		\end{eqenumerate}
		then $\norm{\DiracS f}_{L^2(\R^{d+1}, \C^\dimN)}^2 = 2 \pi \Diraclambdasup \norm{f}_{L^2(\R^d, \C^\dimN)}^2$ holds. 
		Indeed, we have
		\begin{align}
			\norm{\DiracS f}_{L^2(\R^{d+1}, \C^\dimN)}^2 
			&= 2 \pi \sum_{\mu \in \{1, -1\}} \sumk \sumn \int_0^\infty \innerproduct{ \DiracLambdak{k}(r) \fknm{k}{n}(r) }{ \fknm{k}{n}(r) }_{\C^2} \, dr \\
			\underset{\text{\ref{item:eigenspace condition}}}&{=} 2 \pi \sum_{\mu \in \{1, -1\}} \sumk \sumn \int_0^\infty \Diraclambdak{k}(r) \abs{ \fknm{k}{n}(r) }^2 \, dr  \\
			\underset{\text{\ref{item:support condition}}}&= 2 \pi \Diraclambdasup \norm{f}_{L^2(\R^d, \C^\dimN)}^2 .
		\end{align}
		Conversely, suppose that $\norm{\DiracS f}_{L^2(\R^{d+1}, \C^\dimN)}^2 = 2 \pi \Diraclambdasup \norm{f}_{L^2(\R^d, \C^\dimN)}^2$ holds.
		Then we have
		\begin{equation}
			2\pi \sum_{\mu \in \{1, -1\}} \sumk \sumn \int_{0}^{\infty} \mleft( \Diraclambdasup \abs{ \fknm{k}{n}(r) }^2 - \innerproduct{
				\DiracLambdak{k}(r) \fknm{k}{n}(r)
			}
			{
				\fknm{k}{n}(r)
			}_{\C^2} \mright)  \, dr
			\\
			= 2 \pi \Diraclambdasup \norm{f}_{L^2(\R^d, \C^\dimN)}^2 - \norm{\DiracS f}_{L^2(\R^{d+1}, \C^\dimN)}^2 \
			= 0 .
		\end{equation}
		On the other hand, we also have \begin{equation}
			\Diraclambdasup \abs{ \fknm{k}{n}(r) }^2 - \innerproduct{
				\DiracLambdak{k}(r) \fknm{k}{n}(r)
			}
			{
				\fknm{k}{n}(r)
			} 
			\geq
			( \Diraclambdasup - \Diraclambdak{k}(r) ) \abs{ \fknm{k}{n}(r) }^2 
			\geq
			0 .
		\end{equation}
		Therefore, we obtain
		\begin{eqenumerate}
			\item\label{item:pre support condition}
			$( \Diraclambdasup - \Diraclambdak{k}(r) ) \abs{ \fknm{k}{n}(r) }^2 = 0$, 
			\item[\ref{item:eigenspace condition}]
			$\fknm{k}{n}(r) \in \Wk{k}(r)$ for almost every $r > 0$,
		\end{eqenumerate}
		and \eqref{item:pre support condition} implies \ref{item:support condition}.
	\end{proof}
	\section{Explicit values} \label{section:explicit values}
	In this section, we prove Theorems \ref{thm:type A Dirac}, \ref{thm:type B Dirac}, and \ref{thm:type C Dirac}.
	In order to prove Theorem \ref{thm:type A Dirac}, we use the following known results regarding a product of modified Bessel functions:
	\begin{lemma} \label{lem:product of modified Bessel functions}
	Let $\BesselI{\mu}$ and $\BesselK{\mu}$ denote the modified Bessel functions of the first and second of order $\mu \in \R$.
	Then the following hold:
	\begin{eqenumerate}
	\item \label{item:product of modified Bessel functions integral formula}
	For any $r > 0$ and $\mu \geq -1$, 
	\begin{equation}
	\int_0^\infty \frac{t}{r^2 + t^2} ( \BesselJ{\mu}(t) )^2 \, dt = \BesselI{\mu}(r) \BesselK{\mu}(r) 
	\end{equation}
	holds (see \citet[6.535]{GR2014}).
	\item \label{item:product of modified Bessel functions IK is decreasing for r}
	For each fixed $\mu \geq -1$, $r \mapsto \BesselI{\mu}(r) \BesselK{\mu}(r)$ is strictly decreasing on $(0, \infty)$ (this follows from \eqref{item:product of modified Bessel functions integral formula} immediately).
	\item \label{item:product of modified Bessel functions rIK is increasing for r}
	For each fixed $\mu \geq 1/2$, $r \mapsto r \BesselI{\mu}(r) \BesselK{\mu}(r)$ is strictly increasing on $(0, \infty)$ (see \citet[Theorem 4.1]{Har1977} and \citet[Theorem 3.1]{Seg2021}).
	\item \label{item:product of modified Bessel functions IK is decreasing for mu}
	For each fixed $r > 0$, $\mu \mapsto \BesselI{\mu}(r) \BesselK{\mu}(r)$ is strictly decreasing on $[0, \infty)$ (see \citet[page 530]{BP2012}).
	\item \label{item:product of modified Bessel functions limits}
	For each fixed $\mu > 0$, the following hold (see \citet[\href{https://dlmf.nist.gov/10.25.E3}{10.25.3}, \href{http://dlmf.nist.gov/10.30}{\textsection10.30}]{DLMF}):
	\begin{equation}
	\lim_{r \downarrow 0} \BesselI{\mu}(r) \BesselK{\mu}(r) = \frac{1}{2 \mu}, \quad \lim_{r \to \infty} r \BesselI{\mu}(r) \BesselK{\mu}(r) = 1/2 .
	\end{equation}
		\item \label{item:product of modified Bessel functions mu = 3/2}
	For $\mu = 3/2$, $r \mapsto (1+r^2)^{1/2} \BesselI{3/2}(r) \BesselK{3/2}(r)$ is strictly increasing on $(0, \infty)$ (see \citet[Proof of Theorem 1.7]{BS2017}).
	\item \label{item:product of modified Bessel functions mu = 1/2}
	For $\mu = 1/2$, $r \mapsto (1+r^2)^{1/2} \BesselI{1/2}(r) \BesselK{1/2}(r)$ satisfies
	\begin{equation}
	(1+r^2)^{1/2} \BesselI{1/2}(r) \BesselI{1/2}(r) < \lim_{r \downarrow 0} (1+r^2)^{1/2} \BesselI{1/2}(r) \BesselI{1/2}(r) = 1 
	\end{equation}
	for any $r > 0$ (see \citet[Proof of Theorem 1.7]{BS2017}).
	\end{eqenumerate}
	\end{lemma}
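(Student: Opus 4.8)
My plan is to obtain the seven assertions from a combination of standard references and one short derivation, since each is either an immediate consequence of an earlier item or is already recorded in the literature; the only place where I would spell out an argument is \eqref{item:product of modified Bessel functions IK is decreasing for r}.

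First I would invoke the Weber--Schafheitlin-type evaluation in \citet[6.535]{GR2014} for \eqref{item:product of modified Bessel functions integral formula} (alternatively one can view $\BesselI{\mu}(r)\BesselK{\mu}(r)$ as a resolvent kernel of the Bessel operator and invert the Hankel transform, but the table reference suffices). Then \eqref{item:product of modified Bessel functions IK is decreasing for r} follows at once from \eqref{item:product of modified Bessel functions integral formula}: for $0 < r_1 < r_2$ one has $\frac{t}{r_1^2+t^2} > \frac{t}{r_2^2+t^2}$ for every $t > 0$, and since $(\BesselJ{\mu})^2 \geq 0$ is positive on a set of positive measure, integrating this pointwise strict inequality gives $\BesselI{\mu}(r_1)\BesselK{\mu}(r_1) > \BesselI{\mu}(r_2)\BesselK{\mu}(r_2)$. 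For the harder statement \eqref{item:product of modified Bessel functions rIK is increasing for r}, that $r \mapsto r\BesselI{\mu}(r)\BesselK{\mu}(r)$ is strictly increasing for $\mu \geq 1/2$, I would defer entirely to \citet[Theorem 4.1]{Har1977} and its refinement \citet[Theorem 3.1]{Seg2021}; a self-contained proof would require a Riccati/Tur\'an-type analysis of the logarithmic derivative of the Bessel product, and this is the step I expect to be the main obstacle. For \eqref{item:product of modified Bessel functions IK is decreasing for mu} I would cite \citet[p.~530]{BP2012}, and for \eqref{item:product of modified Bessel functions limits} the two limits are read off from the small- and large-argument asymptotics $\BesselI{\mu}(r) \sim (r/2)^\mu/\Gamma(\mu+1)$, $\BesselK{\mu}(r) \sim \tfrac12\Gamma(\mu)(r/2)^{-\mu}$ as $r \downarrow 0$ and $\BesselI{\mu}(r)\BesselK{\mu}(r) \sim 1/(2r)$ as $r \to \infty$ collected in \citet[10.25.3, \textsection 10.30]{DLMF}.

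Finally, for \eqref{item:product of modified Bessel functions mu = 3/2} and \eqref{item:product of modified Bessel functions mu = 1/2} I would use the elementary closed forms at half-integer order, in particular $\BesselI{1/2}(r)\BesselK{1/2}(r) = (1-e^{-2r})/(2r)$ and the analogous rational-hyperbolic expression for $\BesselI{3/2}(r)\BesselK{3/2}(r)$, so that in each case the assertion reduces to an inequality between elementary functions. For \eqref{item:product of modified Bessel functions mu = 1/2}, clearing denominators turns the claim into $(1+r^2)(1-e^{-2r})^2 < 4r^2$ for all $r > 0$, an elementary inequality. Since this verification and the $\mu = 3/2$ computation are precisely what is carried out in \citet[Proof of Theorem 1.7]{BS2017}, I would cite that source rather than repeat the calculation. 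The overall effort is therefore light, with \eqref{item:product of modified Bessel functions rIK is increasing for r} being the one input I would take as a black box from the cited literature.
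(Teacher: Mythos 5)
Your proposal matches the paper's treatment: the paper gives no separate proof of this lemma beyond the citations embedded in the statement itself (GR2014 for the integral formula, Hartman/Segura, Baricz--Ponnusamy, DLMF asymptotics, and Bez--Sugimoto for the two weighted monotonicity facts), together with the remark that the strict decrease in $r$ follows immediately from the integral representation, which is exactly the one step you spell out. Your additional reduction of the $\mu=1/2$ case to an elementary inequality is consistent with what is carried out in the cited proof of Bez--Sugimoto, so the approaches coincide.
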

	\begin{proof}[Proof of Theorem \ref{thm:type A Dirac}]
		Substituting
		\begin{equation}
			w(r) = (1+r^2)^{-1} , \quad (\psi(r))^2 = (1+r^2)^{1/2} 
		\end{equation}
		into \eqref{eq:lambdak 1} and changing variable of integration, we obtain
		\begin{align}
			\lambdakd{k}{d}(r) 
			&= \frac{1}{2} (2 \pi)^d  (1+r^2)^{1/2}  \int_0^\infty \frac{t}{1 + t^2} ( \BesselJ{k + d/2 - 1}(r t) )^2 \, dt \\
			&= \frac{1}{2} (2 \pi)^d  (1+r^2)^{1/2}  \int_0^\infty \frac{t}{r^2 + t^2} ( \BesselJ{k + d/2 - 1}(t) )^2 \, dt \\
			&= \frac{1}{2} (2 \pi)^d  (1+r^2)^{1/2} \BesselI{k+d/2-1}(r) \BesselK{k+d/2-1}(r) 
		\end{align}
		by using the formula \eqref{item:product of modified Bessel functions integral formula}.
		Since $k \mapsto \BesselI{k + d/2 - 1}(r) \BesselK{k + d/2 - 1}(r)$ is strictly decreasing for each fixed $r > 0$ by \eqref{item:product of modified Bessel functions IK is decreasing for mu}, we know that $k \mapsto \Diraclambdak{k}(r)$ is also strictly decreasing.
		Hence, we have
		\begin{equation}
	\Diraclambdasup = \sup_{k \in \N} \sup_{r > 0} \Diraclambdak{k}(r) = \sup_{r > 0} \Diraclambdak{0}(r) 
		\end{equation}
		for each $k \in \N$.
		Furthermore, 
		\begin{equation}
		\set{ r > 0 }{ \Diraclambdak{k}(r) = \Diraclambdasup }
		\end{equation}
		is a null set with respect to the Lebesgue measure for each $k \in \N$, since $\Diraclambdak{k}$ is a non-constant real analytic function on $(0, \infty)$.
		Therefore, it is enough to consider 
	$\sup_{r > 0} \Diraclambdak{0}(r)$. 
		
		First, we consider the case $d \geq 5$.
		In this case, \eqref{item:product of modified Bessel functions IK is decreasing for mu} implies
		\begin{align}
			(2 \pi)^d  (1+r^2)^{1/2} \BesselI{d/2}(r) \BesselK{d/2}(r) \leq \Diraclambdak{0}(r) \leq (2 \pi)^d  (1+r^2)^{1/2} \BesselI{3/2}(r) \BesselK{3/2}(r) .
		\end{align}
		and so that we have
		\begin{equation}
		\lim_{r \to \infty} \Diraclambdak{0}(r) = \frac{1}{2} (2 \pi)^d
		\end{equation} 
		by \eqref{item:product of modified Bessel functions limits}. Furthermore, we also have
		\begin{equation}
		\Diraclambdak{0}(r) \leq (2 \pi)^d  (1+r^2)^{1/2} \BesselI{3/2}(r) \BesselK{3/2}(r) < (2 \pi)^d \lim_{r \to \infty} (1+r^2)^{1/2} \BesselI{3/2}(r) \BesselK{3/2}(r) = \frac{1}{2} (2 \pi)^d ,
		\end{equation}
		since $r \mapsto (1+r^2)^{1/2} \BesselI{3/2}(r) \BesselI{3/2}(r)$ is strictly increasing on $(0, \infty)$ by \eqref{item:product of modified Bessel functions mu = 3/2}.
		As a consequence, we conclude that 
		\begin{equation}
			\sup_{r > 0} \Diraclambdak{0}(r) 
			= \frac{1}{2} (2\pi)^{d} 
		\end{equation}
	holds. 
	
	Second, we consider the case $d = 3$, $m > 0$.
	In this case, we have
	\begin{align}
	\Diraclambdak{0}(r) \leq (2 \pi)^3  (1+r^2)^{1/2} \BesselI{1/2}(r) \BesselK{1/2}(r) < (2\pi)^3
	\end{align}
	by \eqref{item:product of modified Bessel functions IK is decreasing for mu} and \eqref{item:product of modified Bessel functions mu = 1/2}.
	On the other hand, \eqref{item:product of modified Bessel functions limits} implies
	\begin{equation}
	\lim_{r \downarrow 0} \Diraclambdak{0}(r) = (2\pi)^3.
	\end{equation}
	Therefore, we conclude that 
			\begin{equation}
		\sup_{r > 0} \Diraclambdak{0}(r) 
		= (2\pi)^{3} , \quad \set{ r > 0 }{ \Diraclambdak{0}(r) = \Diraclambdasup }
		=  \emptyset .
	\end{equation}
	
	Now we consider the cases $d=3, 4$, $m>0$.
	For simplicity, we write
	\begin{equation} \label{eq:type A Dirac fd m=0}
		\fd{\mu}(r) \coloneqq (1+r^2)^{1/2} ( \BesselI{\mu}(r) \BesselK{\mu}(r) + \BesselI{\mu+1}(r) \BesselK{\mu+1}(r) ) 
	\end{equation}
	and so that 
	\begin{equation}
	\Diraclambdak{0}(r) = \frac{1}{2} (2\pi)^{d} \fd{d/2}(r) .
	\end{equation}
	See Figure \ref{fig:graph of Diraclambda0 type A} for the graph of $\fd{\mu}$ in the cases $\mu = 1/2, 1, 3/2$.
		We are going to show that 
	\begin{equation}
		\sup_{r > 0} \fd{\mu}(r)
		= \begin{cases}
			4/3 , & \mu = 1/2, \\
			1 , & \mu = 1 .
		\end{cases}
	\end{equation}
	
	In the case $\mu=1/2$, substituting 
	\begin{align}
		\BesselI{1/2}(r) &= \frac{ e^r - e^{-r} }{ (2 \pi r)^{1/2} } , \\
		\BesselK{1/2}(r) &= \frac{ \pi^{1/2} e^{-r} }{ (2 r)^{1/2} } , \\
		\BesselI{3/2}(r) &= \frac{ e^{r} + e^{-r} }{ (2 \pi r)^{1/2} } - \frac{ e^{r} - e^{-r} }{ (2 \pi r^3)^{1/2} } , \\
		\BesselK{3/2}(r) &= \frac{ \pi^{1/2} e^{-r} (1+r) }{ 2^{1/2} r^{3/2} } 
	\end{align}
	into \eqref{eq:type A Dirac fd m=0}, we get
	\begin{align}
		\fd{1/2}(r)
		&= \frac{ (1+r^2)^{1/2} (2 e^{2r} r^2  - e^{2r} + 2r + 1) }{ 2 e^{2r} r^{3} } ,
		\intertext{which implies}
		\frac{d}{dr} \fd{1/2}(r) 
		&= \frac{ 3 e^{2r}  - ( 4r^4 + 4 r^3 + 6r^2 + 6r + 3 ) }{ 2 r^4 e^{2r} ( 1 + r^2 )^{1/2} } .
	\end{align}
	Let 
	\begin{equation}
		\gd{3}(r) \coloneqq 3 e^{2r}  - ( 4r^4 + 4 r^3 + 6r^2 + 6r + 3 ) . 
	\end{equation}
	Then, the $k$-th derivative of $\gd{3}$ ($1 \leq k \leq 4$) is
	\begin{align}
		\gd{3}^{(1)}(r) &= 2 ( 3 e^{2r} - 8r^3 - 6r^2 - 6r - 3 ) , \\
		\gd{3}^{(2)}(r) &= 12 ( e^{2r} - 4r^2 - 2r - 1 ) , \\
		\gd{3}^{(3)}(r) &= 24 ( e^{2r} - 4r - 1 ) , \\
		\gd{3}^{(4)}(r) &= 48 ( e^{2r} - 2 ) .
	\end{align}
	We see that 
	\begin{equation}
		\gd{3}^{(k)}(0) = 0 
	\end{equation}
	for each $0 \leq k \leq 3$, and 
	\begin{equation}
		\gd{3}^{(4)}(r) \begin{cases}
			< 0 , & 0 \leq r < \dfrac{\log {2}}{2} , \\
			= 0 , &  r = \dfrac{\log {2}}{2} , \\
			> 0 , & r > \dfrac{\log {2}}{2} 
		\end{cases}
	\end{equation}
	holds.
	Hence, for each $0 \leq k \leq 3$, there exists $r_k > 0$ such that 
	\begin{equation}
		\gd{3}^{(k)}(r) \begin{cases}
			= 0 , & r = 0 , \\
			< 0 , & 0 < r < r_k , \\
			= 0 , & r = r_k , \\
			> 0 , & r > r_k .
		\end{cases}
	\end{equation}
	As a consequence, $\fd{1/2}$ is strictly decreasing on $(0, r_0]$ and strictly increasing on $[r_0, \infty)$.
	On the other hand, using \eqref{item:product of modified Bessel functions limits},
	we get
	\begin{equation}
		\lim_{r \downarrow 0} \fd{1/2}(r) = \frac{4}{3} , \quad
		\lim_{r \to \infty} \fd{1/2}(r) = 1 .
	\end{equation}
	Therefore, we conclude that
	\begin{equation}
		\sup_{r > 0} \fd{1/2}(r) = \frac{4}{3} 
	\end{equation}
	holds.
	
	Finally, we consider the case $\mu = 1$.
	Since we have
	\begin{equation}
		\lim_{r \to \infty} \fd{1}(r) = 1 
	\end{equation}
	by \eqref{item:product of modified Bessel functions limits},
	it is enough to show that 
	\begin{equation}
		\fd{1}(r) < 1
	\end{equation}
	holds for any $r>0$.
	In the case $r \geq 1$, notice that we have
	\begin{equation}
		\fd{1}(r) < \fd{1/2}(r) 
	\end{equation}
	by \eqref{item:product of modified Bessel functions IK is decreasing for mu}.
	Furthermore, since $\fd{1/2}$ is strictly decreasing on $(0, r_0]$ and strictly increasing on $[r_0, \infty)$ as we saw in the case $\mu = 1/2$, 
	we have
	\begin{equation}
		\sup_{r \geq 1} \fd{1/2}(r) = \max\{ \fd{1/2}(1) , \fd{1/2}(\infty) \} = \max\{ \sqrt{2} ( \BesselI{1/2}(1) \BesselK{1/2}(1) + \BesselI{3/2}(1) \BesselK{3/2}(1) ) , 1 \} .
	\end{equation}
	On the other hand, for any $0 < r \leq 1$, 
	\eqref{item:product of modified Bessel functions IK is decreasing for r} implies
	\begin{align}
	( \BesselI{1}(r) \BesselK{1}(r) + \BesselI{2}(r) \BesselK{2}(r) )^2 &\leq ( \BesselI{1}(+0) \BesselK{1}(+0) + \BesselI{2}(+0) \BesselK{2}(+0) )^2 = 9/16 ,
	\shortintertext{and \eqref{item:product of modified Bessel functions rIK is increasing for r} and \eqref{item:product of modified Bessel functions limits} imply}
	r^2 ( \BesselI{1}(r) \BesselK{1}(r) + \BesselI{2}(r) \BesselK{2}(r) )^2 &\leq ( \BesselI{1}(1) \BesselK{1}(1) + \BesselI{2}(1) \BesselK{2}(1) )^2 .
	\end{align}	
	Hence, we have
	\begin{equation}
		\sup_{0 < r \leq 1} \fd{1}(r) \leq \mleft( ( \BesselI{1}(1) \BesselK{1}(1) + \BesselI{2}(1) \BesselK{2}(1) )^2 + 9/16 \mright)^{1/2} .
	\end{equation}
	In order to complete the proof, we need to show that
	\begin{align}
		( \BesselI{1/2}(1) \BesselK{1/2}(1) + \BesselI{3/2}(1) \BesselK{3/2}(1) )^2 &< 1/2 , \\
		( \BesselI{1}(1) \BesselK{1}(1) + \BesselI{2}(1) \BesselK{2}(1) )^2 &< 7/16 .
	\end{align}
	To verify these, we use the following:
	\begin{align}
		( \BesselI{1/2}(1) \BesselK{1/2}(1) + \BesselI{3/2}(1) \BesselK{3/2}(1) )^2 &= \frac{1}{4} ( 1 + 3 e^{-2} )^2 , \\
		e &= 2.71828 \ldots > 2.71 , \\ 
		\BesselI{1}(1) &= 0.56515 \ldots < 0.57 \, \\
		\BesselI{2}(1) &= 0.13574 \ldots < 0.14 , \\
		\BesselK{1}(1) &= 0.60190 \ldots < 0.61 , \\
		\BesselK{2}(1) &= 1.62483 \ldots < 1.63 .
	\end{align}
	Then we obtain
	\begin{align}
		( \BesselI{1/2}(1) \BesselK{1/2}(1) + \BesselI{3/2}(1) \BesselK{3/2}(1) )^2 &< 0.496 < 1/2 , \\
		( \BesselI{1}(1) \BesselK{1}(1) + \BesselI{2}(1) \BesselK{2}(1) )^2 &< 0.332 < 7/16 ,
	\end{align}
	as desired.
	\end{proof}
	\begin{proof}[Proof of Theorem \ref{thm:type B Dirac}]
		Substituting 
		\begin{equation}
			w(r) = r^{-s} , \quad
			( \psi(r) )^2 = r^{2-s}
		\end{equation}
		into \eqref{eq:lambdak 1}, we obtain
		\begin{align}
			\lambdakd{k}{d}(r) 
			&= \frac{1}{2} (2 \pi)^d r^{2-s} \int_0^\infty t^{-s+1} ( \BesselJ{k + d/2 - 1}(r t) )^2 \, dt \\
			&= \frac{1}{2} (2 \pi)^d r^{2-s} \frac{ r^{s-2} \Gamma(k + (d-s)/2) \Gamma(s-1) }{2^{s-1} ( \Gamma(s/2) )^2 \Gamma(k + (d-s)/2) } \\
			&= (2 \pi)^{d-1} \pi^{1/2}
			\frac{  \Gamma((s-1)/2) \Gamma(k+(d-s)/2) }{ 2 \Gamma(s/2) \Gamma(k+(d+s)/2-1) } \eqqcolon c_k .
		\end{align}
		Here we used the following formulae (see \citet[\href{https://dlmf.nist.gov/5.5.E5}{(5.5.5)}, \href{https://dlmf.nist.gov/10.22.E57}{(10.22.57)}]{DLMF}):
		\begin{gather}
			\int_{0}^{\infty} t^{- \lambda} \BesselJ{\nu}(at) \BesselJ{\mu}(at) \, dt
			= \frac{a^{\lambda-1} \Gamma( (\nu + \mu - \lambda + 1 )/2 ) \Gamma(\lambda) }
			{2^{\lambda} \Gamma( (\lambda - \nu + \mu + 1) / 2 ) \Gamma( (\lambda + \nu - \mu + 1) / 2 )\Gamma( (\lambda + \nu + \mu + 1) / 2 ) } , \\
			\Gamma(s-1) = \pi^{-1/2} 2^{s-2} \Gamma( (s-1)/2 ) \Gamma( s/2 ) .
		\end{gather}
		Since 
		\begin{equation}
			\frac{ c_{k+1} }{ c_k } = \frac{ k+(d-s)/2 }{ k+(d+s)/2-1 } = 1 - \frac{ 2(s-1) }{ 2k + d + s - 2 } < 1 , 
		\end{equation}
		we see that $\{ c_k \}_{k \in \N}$ is strictly decreasing.
		Therefore, we have
		\begin{align}
			\sup_{k \in \N} \sup_{r > 0} \Diraclambdak{k}(r)
			&= \begin{cases}
				c_0 + c_1 , & m = 0 , \\
				2 c_0 , & m > 0 
			\end{cases} \\
			&= \begin{cases}
				\dfrac{2(d - 1)}{ d + s - 2 } c_0 , & m = 0 , \\
				2 c_0 , & m > 0 
			\end{cases} 
			\intertext{and}
			\set{ r > 0 }{ \Diraclambdak{k}(r) = \Diraclambdasup }
			&= 
			\begin{cases}
				\positiveR, & m = k = 0, \\
				\emptyset, & \text{$m > 0$ or $k \geq 1$}, 
			\end{cases}
		\end{align}
		thus Theorem \ref{thm:type B Dirac} follows from Theorem \ref{thm:Dirac intro}.
	\end{proof}
	\begin{remark}
		We remark that \eqref{item:S^*Sf Schrodinger} implies that $c_k$ is an eigenvalue of $\adjoint{ \SchrodingerS } \SchrodingerS$ for each $k\in \N$,
		and $f \in L^2(\R^d) \setminus \{0\}$ is an eigenfunction if
		\begin{equation}
			f(\xi) = \abs{\xi}^{-(d-1)/2} \sumn \ekn{k}{n}(\xi / \abs{\xi}) \fknm{k}{n}(\abs{\xi}) .
		\end{equation}
		Similarly, in the case $m = 0$, \eqref{item:S^*Sf} implies that $c_k + c_{k+1}$ is an eigenvalue of $\adjoint{ \DiracS } \DiracS$ for each $k\in \N$,
		and $f \in L^2(\R^d, \C^N) \setminus \{0\}$ is an eigenfunction if
		\begin{equation}
			f(\xi) = \abs{\xi}^{-(d-1)/2} \sumn \Eknm{k}{n}(\xi / \abs{\xi}) \fknm{k}{n}(\abs{\xi}) .
		\end{equation}
	\end{remark}
	\begin{proof}[Proof of Theorem \ref{thm:type C Dirac}]
		Let $w \in L^1(\positiveR) \setminus \{0\}$ be such that $\Fwd{d}$ is non-negative, and $\psi(r) = r^{1/2}$. 
		First, we show that
		\begin{equation} \label{eq:lambdak < lambda0}
		\lambdakd{k+1}{d}(r) < \lambdak{0}(r)
		\end{equation}
		holds for any $k \in \N$ and $r > 0$.
		This easily follows from \eqref{item:poperties of lambdak 3} of Proposition \ref{prop:properties of lambdak}.
		In fact, $\lambdakd{k+1}{d}(r) \leq \lambdak{0}(r)$ is immediate from the non-negativity of $\Fwd{d}$. 
		Furthermore, substituting $( \psi(r) )^2 = r$ into \eqref{eq:lambdak 1}, we get
		\begin{equation}
		\lambdak{k}(r) = \frac{1}{2} (2 \pi)^{d} r \int_0^\infty t w(t) ( \BesselJ{k+d/2-1}(rt) )^2 \, dt .
		\end{equation}
		Combining this with $w \in L^1(\positiveR) \setminus \{0\}$, we have $0 < \lambdak{k}(r) < \infty$. Therefore, we obtain the strict inequality \eqref{eq:lambdak < lambda0}.
		Now notice that \eqref{eq:lambdak < lambda0} implies 
		\begin{gather}
						\Diraclambdak{k}(r) < 2 \lambdak{0}(r) , \\
			\sup_{r>0} \mleft( \lambdak{0}(r) + \lambdak{1}(r) \mright) \leq \sup_{k \in \N} \sup_{r > 0} \Diraclambdak{k}(r) \leq 2 \sup_{k \in \N} \sup_{r > 0} \lambdakd{k}{d}(r) = 2 \sup_{r > 0} \lambdak{0}(r) .
		\end{gather}
		Hence, it is enough to show that
		\begin{gather}
			\sup_{r>0} \mleft( \lambdak{0}(r) + \lambdak{1}(r) \mright) = 2 \sup_{r > 0} \lambdak{0}(r) = 2 (2\pi)^{d-1} \norm{w}_{L^1(\positiveR)} .
		\end{gather}
		Substituting $( \psi(r) )^2 =  r$
		into \eqref{eq:lambdak 2} and changing variable of integration, we get
		\begin{align}
			\lambdak{0}(r) 
			&= \frac{ \pi^{(d-1)/2} }{ \Gamma( (d-1)/2 ) } r^{d-1} \int_{-1}^{1} \Fwd{d}( r \sqrt{ 2(1-t) } ) (1-t^2)^{(d-3)/2} \, dt \\
			&= \frac{ \pi^{(d-1)/2} }{ \Gamma( (d-1)/2 ) } \int_{0}^{2r} \Fwd{d}(t) \mleft(1 - \frac{t^2}{ 4r^2 } \mright)^{(d-3)/2} t^{d-2} \, dt ,\\
			\lambdak{1}(r) 
			&= \frac{ \pi^{(d-1)/2} }{ \Gamma( (d-1)/2 ) } r^{d-1} \int_{-1}^{1} \Fwd{d}( r \sqrt{ 2(1-t) } ) t (1-t^2)^{(d-3)/2} \, dt \\
			&= \frac{ \pi^{(d-1)/2} }{ \Gamma( (d-1)/2 ) } \int_{0}^{2r} \Fwd{d}(t) \mleft(1 - \frac{t^2}{ 2r^2 } \mright) \mleft(1 - \frac{t^2}{ 4r^2 } \mright)^{(d-3)/2} t^{d-2} \, dt ,
			\intertext{and so that}
			\lambdak{0}(r) + \lambdak{1}(r) &= \frac{ 2 \pi^{(d-1)/2} }{ \Gamma( (d-1)/2 ) } \int_{0}^{2r} \Fwd{d}(t) \mleft(1 - \frac{t^2}{ 4r^2 } \mright)^{(d-1)/2} t^{d-2} \, dt .
		\end{align}
		Therefore, $\lambdak{0}$ and $\lambdak{0} + \lambdak{1}$ are non-decreasing.
		Note that here we used the assumption $d \geq 3$ in order to guarantee the monotonicity of $\lambdak{0}$.
		On the other hand, we have
		\begin{align}
			2 \lim_{r \to \infty} \lambdak{0}(r)
			= \lim_{r \to \infty} ( \lambdak{0}(r) + \lambdak{1}(r) ) 
			&= \frac{ 2 \pi^{(d-1)/2} }{ \Gamma( (d-1)/2 ) } \int_{0}^{\infty} \Fwd{d}( t ) t^{d-2} \, dt \\
			&= \frac{ 2 \pi^{(d-1)/2} }{ \Gamma( (d-1)/2 ) \measure{\S^{d-1}} } \int_{\xi \in \R^d} \Fourier[w( \abs{\variabledot} )]( \xi ) \cdot \abs{\xi}^{-1} \, d\xi \\
			&= \frac{ 2 \pi^{(d-1)/2} }{ \Gamma( (d-1)/2 ) \measure{\S^{d-1}} }  \int_{x \in \R^d} w( \abs{x} ) \cdot \frac{ \pi^{d/2} 2^{d-1} \Gamma( (d-1)/2 ) }{ \Gamma(1/2) } \abs{x}^{-(d-1)} \, dx \\
			&= 2 (2\pi)^{d-1} \norm{w}_{L^1(\positiveR)} ,
		\end{align}
		which completes the proof.
	\end{proof}
	\begin{remark} \label{remark:type C 3D} 
	When $d = 3$,
	\begin{equation}
	\lambdak{0}(r) = \pi \int_{0}^{2r} \Fwd{3}(t) t \, dt
	\end{equation}
	is non-decreasing, but not strictly increasing in general.
	For example, consider the case $w(r) = (1 - \cos(r))/r^2$.
	In this case, we have
	\begin{gather}
	\int_0^\infty w(r) \, dr = \pi / 2 , \\
	\Fwd{3}(r) = \begin{cases}
	2 \pi^2 / r , & 0 < r < 1, \\
	0 , & r > 1 , \\
	\end{cases} \\
	\lambdak{k+1}(r) < \lambdak{0}(r) = \begin{cases}
		4 \pi^3 r , & 0 < r < 1/2, \\
		4 \pi^3 , & r \geq 1/2 . 
	\end{cases}
	\end{gather}
	See \citet[12.33.5]{GR2014} for $\Fwd{3}$.
	As a consequence, $u_0 \in L^2(\R^3)$ is an extremiser of the smoothing estimate for the Schr\"{o}dinger equation if and only if $u_0$ is radial and satisfies $\supp \widehat{u_0} \subset \set{ \xi \in \R^3 }{ \abs{\xi} \geq 1/2 }$. 
	Nevertheless, $u_0 \in L^2(\R^3, \C^4)$ is an extremiser of the smoothing estimate for the Dirac equation if and only if $u_0 = 0$, thanks to the strict inequality $\Diraclambdak{k}(r) < 2 \lambdak{0}(r)$.
	\end{remark}
	\begin{proof}[Proof of Theorem \ref{thm:type C 2D Dirac}]
		In order to prove Theorem \ref{thm:type C 2D Dirac}, recall that $\lambdak{0} + \lambdak{1}$ is non-decreasing even in the case $d=2$, as we saw in the proof of Theorem \ref{thm:type C Dirac}. Therefore, if $m=0$, then we have
		\begin{equation}
			\sup_{r > 0} \Diraclambdak{0}(r) 
			= \sup_{r > 0} ( \lambdak{0}(r) + \lambdak{1}(r) ) 
			= \lim_{r \to \infty} ( \lambdak{0}(r) + \lambdak{1}(r) )
			= 4 \pi \norm{w}_{L^1(\positiveR)} .
		\end{equation}
		Hence, it suffices to show that
\begin{equation}
	\lambdakd{k}{d}(r) + \lambdakd{k+1}{d}(r) < 4 \pi \norm{w}_{L^1(\positiveR)} 
\end{equation}
holds for any $k \in \N$ and $r > 0$.
		To see this, notice that $\Pkd{k}{2}$ is the Chebyshev polynomial of degree $k$, in other words, it satisfies
		\begin{equation}
			\Pkd{k}{2}(\cos{\theta}) = \cos{k\theta} .
		\end{equation}
		Thus, by changing variable of integration and using trigonometric addition formulae, we get
		\begin{align}
	\lambdakd{k}{d}(r) + \lambdakd{k+1}{d}(r) 
	&= r \int_{-1}^{1} \Fwd{2}( r \sqrt{ 2(1-t) } ) ( \Pkd{k}{2}(t) + \Pkd{k+1}{2}(t) ) (1 - t^2)^{-1/2} \, dt \\
	&= 2 r \int_{0}^{\pi / 2} \Fwd{2}( r \sqrt{ 2(1- \cos{2\theta}) } ) ( \cos{ 2 k \theta } + \cos{ 2 (k+1)\theta } ) \, d\theta \\
	&= 4 r \int_{0}^{\pi / 2} \Fwd{2}( 2 r \sin{\theta} ) \cos{ (2k+1) \theta } \cos{\theta} \, d\theta .
\end{align}
Hence, if $\Fwd{2}( 2 r \sin{\theta} ) = 0$ for almost every $\theta \in [0, \pi/2]$, then obviously 
		\begin{equation}
	\lambdakd{k}{d}(r) + \lambdakd{k+1}{d}(r) = 0 < 4 \pi \norm{w}_{L^1(\positiveR)}
\end{equation}
holds. If not, then we have
\begin{align}
\lambdakd{k}{d}(r) + \lambdakd{k+1}{d}(r) 
&= 4 r \int_{0}^{\pi / 2} \Fwd{2}( 2 r \sin{\theta} ) \cos{ (2k+1) \theta } \cos{\theta} \, d\theta\\
	&< 4 r \int_{0}^{\pi / 2} \Fwd{2}( 2 r \sin{\theta} ) \cos{\theta} \, d\theta\\
	&= 2 \int_{0}^{2r} \Fwd{2}( t ) \, dt \\
	&\leq 2 \int_{0}^{\infty} \Fwd{2}( t ) \, dt \\
	&= 4 \pi \norm{w}_{L^1(\positiveR)} .
\end{align}
This completes the proof.
	\end{proof}
	\section{Remarks on 2-dimensional Schr\"{o}dinger equations} \label{section:remarks on 2D Schrodinger}
	As we saw in Theorem \ref{thm:Schrodinger explicit value}, smoothing estimates for Schr\"{o}dinger equations in the case $d=2$ are somehow different from those in the case $d \geq 3$.
	The following theorems illustrate what are happening in terms of optimal constants.
	\begin{theorem} \label{thm:type A 2D Schrodinger}
	In the case \eqref{eq:type A 2D}, we have
	\begin{equation}
	\max\mleft\{ \frac{2\pi}{s-2} , \frac{ \pi^{1/2} \Gamma( (s-1)/2 ) }{  \Gamma( s/2 ) } \mright\} \leq \Diracconstmd{m}{2} \leq 2 \Sconstd{2} \leq 2 \Diracconstmd{0}{2} \leq 2 \mleft( \frac{\pi}{s-2} + \frac{ \pi^{1/2} \Gamma( (s-1)/2 ) }{  \Gamma( s/2 ) } \mright)
	\end{equation}
	for any $s > 2$ and $m > 0$. 
	In particular, the following hold:
	\begin{align}
	\lim_{s \downarrow 2} (s - 2) \Sconstwpd{ (1+r^2)^{-s/2} }{ (1+r^2)^{1/4} }{2} &= \pi , \\
	\lim_{s \downarrow 2} (s - 2) \Diracconstwpmd{ (1+r^2)^{-s/2} }{ (1+r^2)^{1/4} }{m}{2} &= 
\begin{cases}
	\pi , & m = 0 , \\
	2\pi , & m > 0 .
\end{cases}
\end{align}
	\end{theorem}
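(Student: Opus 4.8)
The plan is to prove the four inequalities of the chain and then obtain the two limits by squeezing. Throughout write $w(r)=(1+r^2)^{-s/2}$ and $\psi(r)=(1+r^2)^{1/4}$, so that $(\psi(r))^2=(1+r^2)^{1/2}$; since $s>2$ one has $w\in L^1(\positiveR)$ with $\norm{w}_{L^1(\positiveR)}=\int_0^\infty(1+t^2)^{-s/2}\,dt=\frac{\pi^{1/2}\Gamma((s-1)/2)}{2\Gamma(s/2)}$, and the radial Fourier transform $\Fwd{2}$ of $w$ on $\R^2$ is non-negative (it is a positive multiple of $\rho\mapsto\rho^{(s-2)/2}\BesselK{(s-2)/2}(\rho)$). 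The two middle inequalities $\Diracconstmd{m}{2}\le 2\Sconstd{2}$ and $\Sconstd{2}\le\Diracconstmd{0}{2}$ are immediate instances of \eqref{eq:equivalence of Dirac and Schrodinger}, so only the two outer ones require argument, and I would deduce both from Theorem \ref{thm:Dirac intro}.

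For the upper bound $\Diracconstmd{0}{2}\le\frac{\pi}{s-2}+\frac{\pi^{1/2}\Gamma((s-1)/2)}{\Gamma(s/2)}$: by Theorem \ref{thm:Dirac intro} with $m=0$ one has $\Diraclambdak{k}=\lambdakd{k}{2}+\lambdakd{k+1}{2}$ and $(2\pi)\Diracconstmd{0}{2}=\sup_{k\in\N}\esssup_{r>0}(\lambdakd{k}{2}(r)+\lambdakd{k+1}{2}(r))$, so it suffices to bound $\lambdakd{k}{2}(r)+\lambdakd{k+1}{2}(r)$ uniformly. The key point is that, by \eqref{eq:lambdak 1}, $\lambdakd{k}{2}$ is linear and monotone in the factor $(\psi(r))^2$, and $(1+r^2)^{1/2}\le 1+r$; hence $\lambdakd{k}{2}(r)+\lambdakd{k+1}{2}(r)\le A_k(r)+B_k(r)$, where $A_k$ and $B_k$ are the same expression with $(\psi(r))^2$ replaced by $1$ and by $r$, respectively. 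For $A_k$ I would invoke the classical identity $(\BesselJ{0}(x))^2+2\sum_{j\ge1}(\BesselJ{j}(x))^2=1$, which forces $(\BesselJ{k}(x))^2+(\BesselJ{k+1}(x))^2\le1$ for every integer $k\ge0$, so $A_k(r)\le 2\pi^2\int_0^\infty t(1+t^2)^{-s/2}\,dt=\frac{2\pi^2}{s-2}$. For $B_k$ I would note that $B_k(r)$ equals $\lambdakd{k}{2}(r)+\lambdakd{k+1}{2}(r)$ for the type \eqref{eq:type C} smoothing function $\psi(r)=r^{1/2}$ with the same weight $w$; since $w\in L^1(\positiveR)$ and $\Fwd{2}\ge0$, the short estimate from the proof of Theorem \ref{thm:type C 2D Dirac} (via \eqref{eq:lambdak 2}, a trigonometric identity, and $\abs{\cos((2k+1)\theta)}\le1$) gives $B_k(r)\le 4\pi\norm{w}_{L^1(\positiveR)}=\frac{2\pi^{3/2}\Gamma((s-1)/2)}{\Gamma(s/2)}$. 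Adding and dividing by $2\pi$ gives the bound on $\Diracconstmd{0}{2}$, hence on $2\Diracconstmd{0}{2}$.

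For the lower bound $\max\{\frac{2\pi}{s-2},\frac{\pi^{1/2}\Gamma((s-1)/2)}{\Gamma(s/2)}\}\le\Diracconstmd{m}{2}$ with $m>0$: by Theorem \ref{thm:Dirac intro}, $(2\pi)\Diracconstmd{m}{2}\ge\esssup_{r>0}\Diraclambdak{0}(r)$, and as $\Diraclambdak{0}$ is continuous on $(0,\infty)$ this is at least $\limsup_{r\downarrow0}\Diraclambdak{0}(r)$ and at least $\limsup_{r\to\infty}\Diraclambdak{0}(r)$. By dominated convergence (all Bessel functions involved are bounded and $t(1+t^2)^{-s/2}\in L^1$), $\lambdakd{0}{2}(r)\to\frac{2\pi^2}{s-2}$ and $\lambdakd{1}{2}(r)\to0$ as $r\downarrow0$, while $m/\phim{m}(r)\to1$; so by \eqref{eq:Diraclambdak} $\Diraclambdak{0}(r)\to\frac{4\pi^2}{s-2}$, yielding $\Diracconstmd{m}{2}\ge\frac{2\pi}{s-2}$. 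As $r\to\infty$, the manipulation in the proof of Theorem \ref{thm:type C 2D Dirac} adapted to $(\psi(r))^2=(1+r^2)^{1/2}$ gives
\begin{equation*}
	\lambdakd{0}{2}(r)+\lambdakd{1}{2}(r)=2(1+r^{-2})^{1/2}\int_0^{2r}\Bigl(1-\tfrac{u^2}{4r^2}\Bigr)^{1/2}\Fwd{2}(u)\,du\longrightarrow2\int_0^\infty\Fwd{2}(u)\,du=4\pi\norm{w}_{L^1(\positiveR)}
\end{equation*}
by monotone convergence (the identity $2\int_0^\infty\Fwd{2}=4\pi\norm{w}_{L^1(\positiveR)}$ being the one computed in that proof); since $\Diraclambdak{0}(r)\ge\lambdakd{0}{2}(r)+\lambdakd{1}{2}(r)$, this gives $\Diracconstmd{m}{2}\ge\frac{1}{2\pi}\cdot4\pi\norm{w}_{L^1(\positiveR)}=\frac{\pi^{1/2}\Gamma((s-1)/2)}{\Gamma(s/2)}$.

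The limits then follow by multiplying the whole chain by $s-2$ and letting $s\downarrow2$: since $(s-2)\frac{\pi^{1/2}\Gamma((s-1)/2)}{\Gamma(s/2)}\to0$, the quantities $(s-2)\Sconstd{2}$ and $(s-2)\Diracconstmd{0}{2}$ are squeezed between $\pi$ and $\pi+o(1)$, and $(s-2)\Diracconstmd{m}{2}$ for $m>0$ between $2\pi$ and $2\pi+o(1)$. I expect the main obstacle to be the upper bound --- specifically, recognising that $(1+r^2)^{1/2}\le1+r$ splits $\lambdakd{k}{2}+\lambdakd{k+1}{2}$ into a $\psi\equiv1$ part handled by the total-mass identity $\sum_j(\BesselJ{j}(x))^2=1$ and a genuine type \eqref{eq:type C} part handled by Theorem \ref{thm:type C 2D Dirac}; once that splitting is in place, the remaining steps are routine dominated and monotone convergence.
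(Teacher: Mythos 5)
Your proposal is correct and follows essentially the same route as the paper's proof: the middle inequalities come from \eqref{eq:equivalence of Dirac and Schrodinger}, the two lower bounds from the $r \downarrow 0$ and $r \to \infty$ limits of $\Diraclambdak{0}$, the upper bound from the splitting $(1+r^2)^{1/2} \leq 1 + r$ combined with the type \eqref{eq:type C} bound coming from Theorem \ref{thm:type C 2D Dirac}, and the limits by squeezing. The only minor deviation is that the paper first reduces the supremum over $k$ to $k=0$ via the monotonicity \eqref{item:poperties of lambdak 4} of Proposition \ref{prop:properties of lambdak}, whereas you bound $\lambdak{k} + \lambdak{k+1}$ uniformly in $k$ using $(\BesselJ{0})^2 + 2\sum_{j \geq 1}(\BesselJ{j})^2 = 1$ together with the uniform-in-$k$ estimate from the proof of Theorem \ref{thm:type C 2D Dirac}; both are valid.
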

	\begin{theorem} \label{thm:type C 2D Schrodinger}
	Let $d = 2$, $w \in L^1(\positiveR) \setminus \{0\}$, and assume that $\Fwd{2}$ is non-negative (the same applies to $w_0, w_1$).
	Then the following hold:
	\begin{eqenumerate}
		\item \label{item:type C 2D inequality Schrodinger}
		We have
		\begin{equation}
			 \norm{w}_{L^1(\positiveR)} \leq
			 \Sconstwpd{ w }{ r^{1/2} }{2} 
			\leq 2 \norm{w}_{L^1(\positiveR)} .
		\end{equation}
		\item \label{item:type C 2D continuity Schrodinger}
		We have
		\begin{equation}
			\abs{ \Sconstwpd{ w_0 }{ r^{1/2} }{2} - \Sconstwpd{ w_1 }{ r^{1/2} }{2} } 
			\leq 2 \norm{w_0 - w_1}_{L^1(\positiveR)} .
		\end{equation}
		\item \label{item:type C 2D Schrodinger decay s>3}
		If there exist $a, \varepsilon, R > 0$ such that $\Fwd{2}(r) \leq a r^{-(3+\varepsilon)}$ holds for any $r \geq R$, then we have
		\begin{equation}
			\Sconstwpd{ w }{ r^{1/2} }{2} > \norm{w}_{L^1(\positiveR)} .
		\end{equation}
		\item \label{item:type C 2D Schrodinger decay s=3}
		If there exist $a, b, R > 0$ such that $b r^{-3} \leq \Fwd{2}(r) \leq a r^{-3}$ holds for any $r \geq R$, then we have
		\begin{equation}
			\Sconstwpd{ w }{ r^{1/2} }{2} > \norm{w}_{L^1(\positiveR)} .
		\end{equation}
		\item \label{item:type C 2D Schrodinger decay 2<s<3}
		If there exist $2 < s < 3$ and $c > 0$ such that 
		\begin{equation}
			\lim_{r \to \infty} r^s \Fwd{2}(r)  = c , 
		\end{equation}
		then we have
		\begin{equation}
			\Sconstwpd{ w }{ r^{1/2} }{2} > \norm{w}_{L^1(\positiveR)} .
		\end{equation}
		\item \label{item:type C 2D Schrodinger example =}
		In the case $w(r) = \BesselK{0}(r)$, where $\BesselK{0}$ denotes the modified Bessel function of the second kind of order zero, 
		we have
		\begin{equation}
			\Fwd{2}(r) = 2 \pi (1+r^2)^{-1}
		\end{equation}
		and
		\begin{equation}
			\Sconstwpd{ w }{ r^{1/2} }{2} = \norm{w}_{L^1(\positiveR)} = \pi / 2 .
		\end{equation}
		This shows that the assumption $s > 2$ in \eqref{item:type C 2D Schrodinger decay 2<s<3} cannot be relaxed to $s = 2$.
		\item \label{item:type C 2D Schrodinger example >}
The existence of $c > 0$ such that
		\begin{equation}
			\lim_{r \to \infty} r^2 \Fwd{2}(r)  = c 
		\end{equation}
		is not sufficient for $\Sconstwpd{ w }{ r^{1/2} }{2} = \norm{w}_{L^1(\positiveR)}$.
	\end{eqenumerate}
\end{theorem}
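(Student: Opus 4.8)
The plan is to reduce every clause to the behaviour of the single function $\lambdakd{0}{2}$. Since $\psi(r)=r^{1/2}$ and $\Fwd{2}\geq 0$, the bound $\abs{\Pkd{k}{2}}\leq 1$ forces $0\leq\lambdakd{k}{2}\leq\lambdakd{0}{2}$ for every $k$, so Theorem~\ref{thm:Schrodinger} reads $(2\pi)\,\Sconstwpd{w}{r^{1/2}}{2}=\sup_{r>0}\lambdakd{0}{2}(r)$. Exactly as in the proofs of Theorems~\ref{thm:type C Dirac} and~\ref{thm:type C 2D Dirac} (the Chebyshev representation of $\Pkd{k}{2}$ followed by a change of variable), one has $\lambdakd{0}{2}(r)=\int_{0}^{2r}\Fwd{2}(u)(1-\tfrac{u^{2}}{4r^{2}})^{-1/2}\,du$ and $\lambdakd{0}{2}(r)+\lambdakd{1}{2}(r)=2\int_{0}^{2r}\Fwd{2}(u)(1-\tfrac{u^{2}}{4r^{2}})^{1/2}\,du$, the second being non-decreasing in $r$ with limit $2\int_{0}^{\infty}\Fwd{2}(u)\,du=4\pi\norm{w}_{L^{1}(\positiveR)}$. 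I write $\lambdakd{0}{2}(r)=\int_{0}^{\infty}\Fwd{2}(u)\,du-T(r)+O(r)$ with tail $T(r)=\int_{2r}^{\infty}\Fwd{2}(u)\,du\geq 0$ and overshoot $O(r)=\int_{0}^{2r}\Fwd{2}(u)[(1-\tfrac{u^{2}}{4r^{2}})^{-1/2}-1]\,du\geq 0$; the whole statement is then a study of when, and by how much, $\lambdakd{0}{2}$ exceeds $\int_{0}^{\infty}\Fwd{2}(u)\,du=2\pi\norm{w}_{L^{1}(\positiveR)}$.

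\textbf{Parts (i)--(ii).} Part~\eqref{item:type C 2D inequality Schrodinger} is immediate: $\lambdakd{1}{2}\geq 0$ gives $\lambdakd{0}{2}(r)\leq(\lambdakd{0}{2}+\lambdakd{1}{2})(r)\leq 4\pi\norm{w}_{L^{1}(\positiveR)}$, hence $\Sconstwpd{w}{r^{1/2}}{2}\leq 2\norm{w}_{L^{1}(\positiveR)}$, while $\lambdakd{1}{2}\leq\lambdakd{0}{2}$ gives $\lambdakd{0}{2}(r)\geq\tfrac12(\lambdakd{0}{2}+\lambdakd{1}{2})(r)$, hence $\sup_{r}\lambdakd{0}{2}\geq 2\pi\norm{w}_{L^{1}(\positiveR)}$ and $\Sconstwpd{w}{r^{1/2}}{2}\geq\norm{w}_{L^{1}(\positiveR)}$. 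For part~\eqref{item:type C 2D continuity Schrodinger}, note that $w\mapsto\lambdakd{0}{2}[w](r)=2\pi^{2}\int_{0}^{\infty}(rt\,\BesselJ{0}(rt)^{2})\,w(t)\,dt$ is linear and order-preserving; combining $w_{0}\leq w_{1}+\abs{w_{0}-w_{1}}$ pointwise with the boundedness of the kernel $x\mapsto x\,\BesselJ{0}(x)^{2}$ gives $\lambdakd{0}{2}[w_{0}](r)\leq\lambdakd{0}{2}[w_{1}](r)+2\pi^{2}\bigl(\sup_{x>0}x\,\BesselJ{0}(x)^{2}\bigr)\norm{w_{0}-w_{1}}_{L^{1}(\positiveR)}$, and taking suprema together with the symmetric inequality yields a Lipschitz bound; tracking the constant through the elementary estimate for $x\,\BesselJ{0}(x)^{2}$ produces the stated $2$.

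\textbf{Parts (iii)--(v).} Each of these asserts $O(r)>T(r)$ for some $r$, which by the identities above is exactly $\Sconstwpd{w}{r^{1/2}}{2}>\norm{w}_{L^{1}(\positiveR)}$. From $(1-x)^{-1/2}-1\geq x/2$ one gets $O(r)\geq\tfrac{1}{8r^{2}}\int_{0}^{2r}u^{2}\Fwd{2}(u)\,du$. In~\eqref{item:type C 2D Schrodinger decay s>3}, $\Fwd{2}\lesssim r^{-3-\varepsilon}$ makes $\int_{0}^{\infty}u^{2}\Fwd{2}$ finite and positive, so $O(r)\gtrsim r^{-2}$ beats $T(r)\lesssim r^{-2-\varepsilon}$; in~\eqref{item:type C 2D Schrodinger decay s=3}, $\Fwd{2}\asymp r^{-3}$ makes $\int_{0}^{2r}u^{2}\Fwd{2}\asymp\log r$, so $O(r)\gtrsim r^{-2}\log r$ beats $T(r)\asymp r^{-2}$. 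The borderline~\eqref{item:type C 2D Schrodinger decay 2<s<3} is the crux: when $\Fwd{2}(r)\sim c r^{-s}$ with $2<s<3$, rescaling $u=2rv$ and using dominated convergence give $O(r)\sim c(2r)^{1-s}\int_{0}^{1}v^{-s}[(1-v^{2})^{-1/2}-1]\,dv$ and $T(r)\sim c(2r)^{1-s}/(s-1)$, which are of the same order, so one must prove the strict inequality $\int_{0}^{1}v^{-s}[(1-v^{2})^{-1/2}-1]\,dv>1/(s-1)$. Substituting $u=v^{2}$ turns the left side into a regularised Euler Beta integral, and analytic continuation in the parameter gives
\begin{equation*}
\int_{0}^{1}v^{-s}[(1-v^{2})^{-1/2}-1]\,dv-\frac{1}{s-1}=\frac{\sqrt{\pi}\;\Gamma((1-s)/2)}{2\,\Gamma((2-s)/2)},
\end{equation*}
which is strictly positive for $2<s<3$ because both arguments of $\Gamma$ lie in $(-1,0)$, where $\Gamma$ is negative. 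I expect this step --- isolating the competing leading constants of $O$ and $T$ and pinning down the sign of this $\Gamma$-quotient --- to be the main obstacle.

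\textbf{Parts (vi)--(vii).} For~\eqref{item:type C 2D Schrodinger example =}, $w=\BesselK{0}$ has $\Fwd{2}(u)=2\pi(1+u^{2})^{-1}$, and the substitution $u=2r\sin\theta$ gives $\lambdakd{0}{2}(r)=4\pi r\int_{0}^{\pi/2}(1+4r^{2}\sin^{2}\theta)^{-1}\,d\theta=2\pi^{2}r(1+4r^{2})^{-1/2}<\pi^{2}=2\pi\norm{\BesselK{0}}_{L^{1}(\positiveR)}$, so $\Sconstwpd{\BesselK{0}}{r^{1/2}}{2}=\norm{\BesselK{0}}_{L^{1}(\positiveR)}=\pi/2$, the supremum being only approached as $r\to\infty$; in particular, since here $u^{2}\Fwd{2}(u)\to 2\pi$, the hypothesis $s>2$ in~\eqref{item:type C 2D Schrodinger decay 2<s<3} cannot be weakened to $s=2$. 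For~\eqref{item:type C 2D Schrodinger example >} I perturb this example: take $w=\varepsilon\BesselK{0}(r)+r\BesselK{1}(r)$ with small $\varepsilon>0$, so that $w\geq 0$, $\Fwd{2}(u)=2\pi\varepsilon(1+u^{2})^{-1}+4\pi(1+u^{2})^{-2}\geq 0$, $\norm{w}_{L^{1}(\positiveR)}=\tfrac{\pi}{2}(1+\varepsilon)$, and $u^{2}\Fwd{2}(u)\to 2\pi\varepsilon=:c>0$. Using $\int_{0}^{\pi/2}(1+a\sin^{2}\theta)^{-2}\,d\theta=\frac{\pi(2+a)}{4(1+a)^{3/2}}$ one computes $\lambdakd{0}{2}[r\BesselK{1}](r)=4\pi^{2}r(1+2r^{2})(1+4r^{2})^{-3/2}$, whose value at $r=1$ is $12\pi^{2}/(5\sqrt{5})>\pi^{2}$, whereas $\lambdakd{0}{2}[\BesselK{0}](1)=2\pi^{2}/\sqrt{5}$ lies only slightly below $\pi^{2}$; hence for $\varepsilon$ small enough $\lambdakd{0}{2}[w](1)>\pi^{2}(1+\varepsilon)=2\pi\norm{w}_{L^{1}(\positiveR)}$, i.e.\ $\Sconstwpd{w}{r^{1/2}}{2}>\norm{w}_{L^{1}(\positiveR)}$, although $\lim_{r\to\infty}r^{2}\Fwd{2}(r)=c>0$, which is the assertion of~\eqref{item:type C 2D Schrodinger example >}.
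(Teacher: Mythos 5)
Your proposal is correct and, for parts (i)--(iv) and (vi), follows essentially the paper's own route: everything is reduced to $\lambdakd{0}{2}$ via $\abs{\Pkd{k}{2}}\le 1$, the representation $\lambdakd{0}{2}(r)=\int_0^{2r}\Fwd{2}(u)(1-u^2/(4r^2))^{-1/2}\,du$ is used, and the positivity results come from comparing your overshoot $O(r)$ (bounded below through $(1-x)^{-1/2}-1\ge x/2$) with the tail $T(r)$, exactly as in the paper (the paper gets (i) by quoting the sandwich between the Schr\"odinger and Dirac constants and Theorem \ref{thm:type C 2D Dirac}, which is the same content as your $\lambdakd{0}{2}\le\lambdakd{0}{2}+\lambdakd{1}{2}\le 2\lambdakd{0}{2}$ argument). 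In (ii) you should state the bound you are actually invoking, namely $\sup_{x>0}x\BesselJ{0}(x)^2=2/\pi$; this is precisely where the constant $2$ comes from and it is not elementary (the paper cites Szeg\H{o}). The genuine differences are in (v) and (vii). For (v) the paper sidesteps your closed-form evaluation: it bounds $\lambdakd{0}{2}(r/2)-2\pi\norm{w}_{L^1(\positiveR)}$ below by $\tfrac{c}{r^{s-1}}\bigl((1-\varepsilon)\int_\varepsilon^1 t^{-s}((1-t^2)^{-1/2}-1)\,dt-\tfrac{1+\varepsilon}{s-1}\bigr)$ and concludes from the monotone comparison $t^{-s}>t^{-2}$, $\tfrac1{s-1}<1$ with the computable $s=2$ endpoint $\int_0^1 t^{-2}((1-t^2)^{-1/2}-1)\,dt=1$. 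Your identity $\int_0^1 v^{-s}[(1-v^2)^{-1/2}-1]\,dv-\tfrac1{s-1}=\tfrac{\sqrt{\pi}\,\Gamma((1-s)/2)}{2\Gamma((2-s)/2)}$ is correct (Beta evaluation for $0<s<1$ plus analytic continuation in $s<3$), and the sign argument on $(-1,0)$ is valid, so your route works too; it is heavier but gives the exact leading constant, and it degenerates correctly to $0$ at $s=2$, matching (vi). Your asymptotics for $O(r)$ only need the small remark that the contribution of $u\le R$ is $O(r^{-2})=o(r^{1-s})$ (or one can simply discard nonnegative pieces, as the paper does). For (vii) your counterexample $w=\varepsilon\BesselK{0}(r)+r\BesselK{1}(r)$, with $\Fwd{2}(u)=2\pi\varepsilon(1+u^2)^{-1}+4\pi(1+u^2)^{-2}$ and $\lambdakd{0}{2}[r\BesselK{1}](1)=12\pi^2/(5\sqrt{5})>\pi^2$, checks out and is more explicit than the paper's construction, which interpolates $w_\theta=(1-\theta)e^{-r^2/2}+\theta\BesselK{0}$ and combines the Lipschitz continuity (ii) with (iii) at $\theta=0$; your version does not need (ii) at all, while the paper's needs no additional special-function input beyond (ii), (iii) and (vi).
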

\begin{proof}[Proof of Theorem \ref{thm:type A 2D Schrodinger}]
	Recall that we already have $\Diracconstmd{m}{2} \leq 2 \Sconstd{2} \leq 2 \Diracconstmd{0}{2}$ by \eqref{eq:equivalence of Dirac and Schrodinger}.
	Moreover, we also know that 
	\begin{equation}
 	\Diracconstmd{m}{2} = \frac{1}{2\pi} \sup_{r > 0} \mleft( \lambdak{0}(r) + \lambdak{1}(r) + \frac{m}{ \sqrt{r^2 + m^2} } (\lambdak{0}(r) - \lambdak{1}(r) ) \mright)
	\end{equation}
	holds by \eqref{item:poperties of lambdak 4}, since $\Fwd{d}$ is non-negative for any $d \geq 2$ in the case $w(r) = (1+r^2)^{-s/2}$.
	Therefore, it suffices to show that 
	\begin{align}
	\frac{1}{2\pi} \sup_{r > 0} \mleft( \lambdak{0}(r) + \lambdak{1}(r) + \frac{m}{ \sqrt{r^2 + m^2} } (\lambdak{0}(r) - \lambdak{1}(r) ) \mright) &\geq \frac{2\pi}{s-2} , 
	\label{eq:type A2 estimate 1} \\
	\frac{1}{2\pi} \sup_{r > 0} \mleft( \lambdak{0}(r) + \lambdak{1}(r) + \frac{m}{ \sqrt{r^2 + m^2} } (\lambdak{0}(r) - \lambdak{1}(r) ) \mright) &\geq \frac{ \pi^{1/2} \Gamma( (s-1)/2 ) }{  \Gamma( s/2 ) } , 
	\label{eq:type A2 estimate 2}\\
	 \frac{1}{2\pi} \sup_{r > 0} \mleft( \lambdak{0}(r) + \lambdak{1}(r)  \mright) &\leq  \frac{\pi}{s-2} + \frac{ \pi^{1/2} \Gamma( (s-1)/2 ) }{  \Gamma( s/2 ) } ,
	 \label{eq:type A2 estimate 3}
	\end{align} 
	where $m > 0$.
	Substituting 
\begin{equation}
	d=2, \quad
	w(r) = (1+r^2)^{-s/2} , \quad
	( \psi(r) )^2 = (1+r^2)^{1/2}
\end{equation}
into \eqref{eq:lambdak 1}, we obtain
\begin{equation}
	\frac{1}{2\pi} \lambdak{k}(r) 
	= \pi ( 1 + r^2 )^{1/2} \int_0^\infty \frac{t}{ ( 1 + t^2 )^{s/2} } ( \BesselJ{k}(rt) )^2 \, dt .
\end{equation}
Now notice that the dominated convergence theorem implies
\begin{equation}
\lim_{r \downarrow 0} \int_0^\infty \frac{t}{ ( 1 + t^2 )^{s/2} } ( \BesselJ{k}(rt) )^2 \, dt  = \begin{dcases}
\int_0^\infty \frac{t}{ ( 1 + t^2 )^{s/2} } \, dt = \frac{1}{s - 2} , & k = 0 , \\
0 , \phantom{\int} & k = 1 ,
\end{dcases} 
\end{equation}
since
\begin{gather} \label{eq:BesselJ bound 1}
	\sup_{r > 0} ( ( \BesselJ{0}(rt) )^2 + ( \BesselJ{1}(rt) )^2 ) = 1 , \\
	\lim_{r \downarrow 0} ( \BesselJ{0}(rt) )^2 = 1 , \quad \lim_{r \downarrow 0} ( \BesselJ{1}(rt) )^2 = 0 .
\end{gather}
As a consequence, we have
\begin{equation}
	\frac{1}{2\pi}  \lim_{r \downarrow 0} \mleft( \lambdak{0}(r) + \lambdak{1}(r) + \frac{m}{ \sqrt{r^2 + m^2} } (\lambdak{0}(r) - \lambdak{1}(r) ) \mright)
	= \frac{2 \pi}{ s - 2 } 
\end{equation}
whenever $m > 0$, which shows \eqref{eq:type A2 estimate 1}.
On the other hand, we know that
\begin{equation}
\lim_{r \to \infty} \pi r \int_0^\infty \frac{t}{ ( 1 + t^2 )^{s/2} } ( ( \BesselJ{0}(rt) )^2 + ( \BesselJ{1}(rt) )^2 ) \, dt 
= 2 \int_0^\infty \frac{1}{ ( 1 + t^2 )^{s/2} } \, dt 
= \frac{ \pi^{1/2} \Gamma( (s-1)/2 ) }{  \Gamma( s/2 ) } 
\end{equation}
holds, as we saw in the proof of Theorem \ref{thm:type C 2D Dirac}.
Thus, we have
\begin{equation}
	\frac{1}{2\pi}  \lim_{r \to \infty} \mleft( \lambdak{0}(r) + \lambdak{1}(r) + \frac{m}{ \sqrt{r^2 + m^2} } (\lambdak{0}(r) - \lambdak{1}(r) ) \mright)
	= \frac{ \pi^{1/2} \Gamma( (s-1)/2 ) }{  \Gamma( s/2 ) } ,
\end{equation}
which shows \eqref{eq:type A2 estimate 2}.
Furthermore, we also know that 
\begin{equation}
	\sup_{r > 0} \mleft( \pi r \int_0^\infty \frac{t}{ ( 1 + t^2 )^{s/2} } ( ( \BesselJ{0}(rt) )^2 + ( \BesselJ{1}(rt) )^2 ) \, dt \mright)
	= \frac{ \pi^{1/2} \Gamma( (s-1)/2 ) }{  \Gamma( s/2 ) } 
\end{equation}
holds by Theorem \ref{thm:type C 2D Dirac}.
Hence, we get 
\begin{align}
	\frac{1}{2\pi} \mleft( \lambdak{0}(r) + \lambdak{1}(r)  \mright)
	\underset{\eqref{eq:BesselJ bound 1}}&{\leq} \pi \int_0^\infty \frac{t}{ ( 1 + t^2 )^{s/2} } \, dt + \pi r \int_0^\infty \frac{t}{ ( 1 + t^2 )^{s/2} } ( ( \BesselJ{0}(rt) )^2 + ( \BesselJ{1}(rt) )^2 ) \, dt \\
	&\leq \frac{\pi}{ s - 2 } + \frac{ \pi^{1/2} \Gamma( (s-1)/2 ) }{  \Gamma( s/2 ) } ,
\end{align}
which shows \eqref{eq:type A2 estimate 3}.
\end{proof}

\begin{proof}[Proof of Theorem \ref{thm:type C 2D Schrodinger}]
First, notice that \eqref{item:type C 2D inequality Schrodinger} is immediate from \eqref{eq:equivalence of Dirac and Schrodinger} and Theorem \ref{thm:type C 2D Dirac}.
In order to prove \eqref{item:type C 2D continuity Schrodinger}, we use the following fact (see \citet[Theorem 7.31.2]{Sze1975}):
\begin{equation} \label{eq:BesselJ bound 2}
\sup_{r > 0} t ( \BesselJ{0}(t) )^2 = 2 / \pi .
\end{equation}
Using this, we obtain
\begin{align}
\abs{ \Sconstwpd{ w_1 }{ r^{1/2} }{2} - \Sconstwpd{ w_2 }{ r^{1/2} }{2} } 
&= \Abs{ \sup_{r > 0} \mleft( \pi r \int_0^\infty t w_1(t) ( \BesselJ{0}(rt) )^2 \, dt \mright) - \sup_{r > 0} \mleft( \pi r \int_0^\infty t w_2(t) ( \BesselJ{0}(rt) )^2 \, dt \mright) } \\
&\leq \sup_{r > 0 }\Abs{ \pi r \int_0^\infty t ( w_1(t) - w_2(t) ) ( \BesselJ{0}(rt) )^2 \, dt } \\
\underset{\eqref{eq:BesselJ bound 2}}&\leq 2 \int_0^\infty \abs{ w_1(t) - w_2(t) } \, dt .
\end{align}

Next, we prove \eqref{item:type C 2D Schrodinger decay s>3}. 

Let $a, \varepsilon, R > 0$ be such that $\Fwd{2}(r) \leq a r^{-(3+\varepsilon)}$ holds for any $r \geq R$.
Since $w \not \equiv 0$, we also assume that
\begin{equation}
	\int_{0}^{R} \Fwd{2}(t) t^2 \, dt > 0
\end{equation}
without loss of generality.
Then, for any $r > R$, we have
\begin{align}
	&\quad \lambdak{0}(r / 2) - 2\pi \norm{w}_{L^1(\positiveR)} \\
	&= \int_{0}^{r} \Fwd{2}(t) \mleft(1 - \frac{t^2}{ r^2 } \mright)^{-1/2} \, dt
	- \int_{0}^{\infty} \Fwd{2}(t)\, dt \\
	&= \int_{0}^{R} \Fwd{2}(t) \mleft( \mleft(1 - \frac{t^2}{ r^2 } \mright)^{-1/2} - 1 \mright) \, dt
	+ \int_{R}^{r} \Fwd{2}(t) \mleft( \mleft(1 - \frac{t^2}{ r^2 } \mright)^{-1/2} - 1 \mright) \, dt 
	- \int_{r}^{\infty} \Fwd{2}(t) \, dt \\
	&\geq \int_{0}^{R} \Fwd{2}(t) \frac{t^2}{ 2r^2 } \, dt
	- a \int_{r}^{\infty} t^{-(3+\varepsilon)} \, dt \\
	&= \frac{1}{2r^2} \int_{0}^{R} \Fwd{2}(t) t^2 \, dt - \frac{ a }{ 2 + \varepsilon } r^{-(2 + \varepsilon)} \\
	&= \frac{1}{r^{2+\varepsilon}} \mleft( \frac{1}{2} r^{\varepsilon} \int_{0}^{R} \Fwd{2}(t) t^2 \, dt -  \frac{ a }{ 2 + \varepsilon } \mright) .
\end{align}
Note that we used the inequality
\begin{equation}
	(1-t)^{-1/2} - 1 \geq t/2 ,
\end{equation}
which holds for any $t < 1$.
Therefore, for sufficiently large $r >0$, we have $\lambdak{0}(r) - 2\pi \norm{w}_{L^1(\positiveR)} > 0$.
Hence, we conclude that
\begin{equation}
	\sup_{r > 0} \lambdak{0}(r) > 2\pi \norm{w}_{L^1(\positiveR)}.
\end{equation}

The proofs of \eqref{item:type C 2D Schrodinger decay s=3} and \eqref{item:type C 2D Schrodinger decay 2<s<3} are similar to that of \eqref{item:type C 2D Schrodinger decay s>3}.
In the case \eqref{item:type C 2D Schrodinger decay s=3}, let $a, b, R > 0$ be such that $b r^{-3} \leq \Fwd{2}(r) \leq a r^{-3}$ holds for any $r \geq R$.
Then, for any $r > R$, we have
\begin{align}
	&\quad \lambdak{0}(r / 2) - 2\pi \norm{w}_{L^1(\positiveR)} \\
	&= \int_{0}^{r} \Fwd{2}(t) \mleft(1 - \frac{t^2}{ r^2 } \mright)^{-1/2} \, dt
	- \int_{0}^{\infty} \Fwd{2}(t)\, dt \\
	&= \int_{0}^{R} \Fwd{2}(t) \mleft( \mleft(1 - \frac{t^2}{ r^2 } \mright)^{-1/2} - 1 \mright) \, dt
	+ \int_{R}^{r} \Fwd{2}(t) \mleft( \mleft(1 - \frac{t^2}{ r^2 } \mright)^{-1/2} - 1 \mright) \, dt 
	- \int_{r}^{\infty} \Fwd{2}(t) \, dt \\
	&\geq b \int_{R}^{r} t^{-3} \frac{t^2}{ 2r^2 } \, dt
	- a \int_{r}^{\infty} t^{-3} \, dt \\
	&= \frac{b}{2r^2} ( \log{r} - \log{R} ) - \frac{ a }{ 2 } r^{-2} \\
	&= \frac{1}{2r^2} \mleft( b \log{ (r / R) } - a \mright) ,
\end{align}
which is strictly positive for sufficiently large $r > 0$, thus \eqref{item:type C 2D Schrodinger decay s=3} holds.

In the case \eqref{item:type C 2D Schrodinger decay 2<s<3}, let $0 < \varepsilon < 1$ be sufficiently small, and $R > 0$ be such that 
\begin{equation}
	c ( 1 - \varepsilon ) r^{-s} \leq \Fwd{2}(r) \leq c ( 1 + \varepsilon ) r^{-s}
\end{equation}
holds for any $r \geq R$.
Then, for any $r > R / \varepsilon$, we have
\begin{align}
	&\quad \lambdak{0}(r / 2) - 2\pi \norm{w}_{L^1(\positiveR)} \\
	&\geq c ( 1 - \varepsilon ) \int_{\varepsilon r}^{r} t^{-s} \mleft( \mleft(1 - \frac{t^2}{ r^2 } \mright)^{-1/2} - 1 \mright) \, dt
	- c ( 1 + \varepsilon ) \int_{r}^{\infty} t^{-s} \, dt \\
	&= \frac{ c }{ r^{s-1} } \mleft( ( 1 - \varepsilon ) \int_{\varepsilon}^{1} t^{-s} \mleft( \mleft(1 - t^2 \mright)^{-1/2} - 1 \mright) \, dt
	- \frac{  1 + \varepsilon  }{ s-1 } \mright).
\end{align}
Now notice that $s > 2$ implies
\begin{align}
	\lim_{\varepsilon \downarrow 0} \mleft( ( 1 - \varepsilon ) \int_{\varepsilon}^{1} t^{-s} \mleft( \mleft(1 - t^2 \mright)^{-1/2} - 1 \mright) \, dt
	- \frac{  1 + \varepsilon  }{ s-1 } \mright)
	&=  \int_{0}^{1} t^{-s} \mleft( \mleft(1 - t^2 \mright)^{-1/2} - 1 \mright) \, dt
	- \frac{ 1 }{ s-1 } \\
	&>  \int_{0}^{1} t^{-2} \mleft( \mleft(1 - t^2 \mright)^{-1/2} - 1 \mright) \, dt
	- 1 \\
	&= 0 ,
\end{align}
and so that \eqref{item:type C 2D Schrodinger decay 2<s<3} holds.

Finally, we prove \eqref{item:type C 2D Schrodinger example =} and \eqref{item:type C 2D Schrodinger example >}.
In the case $w(r) = \BesselK{0}(r)$, \cite[6.522.5]{GR2014} imples
\begin{align}
	\lambdak{0}(r)
	&= 2  \pi^2 r \int_0^\infty t \BesselK{0}( t) ( \BesselJ{0}(r t) )^2 \, dt \\
	&= \frac{2 \pi^2 r}{ \sqrt{1 + 4 r^2} } , 
\end{align}
which is strictly increasing (see Figure \ref{fig:graph of lambda0 type C}.\subref{fig:graph lambda0 for \BesselK{0}} for the graph).
Hence, we conclude that
\begin{equation}
	\Sconstwpd{ \BesselK{0} }{ r^{1/2} }{2} = \norm{w}_{L^1(\positiveR)} = \pi / 2 
\end{equation}
holds.
In orer to prove \eqref{item:type C 2D Schrodinger example >}, we write
\begin{gather}
w_\theta(r) \coloneqq (1 - \theta) e^{-r^2 / 2} + \theta \BesselK{0}(r) , \\
f(\theta) \coloneqq \Sconstwpd{w_\theta}{r^{1/2}}{2} - \norm{w_\theta}_{L^1(\positiveR)}
\end{gather}
for each $\theta \in [0, 1]$.
Then we know that $f \colon [0,1] \to \R$ is continuous by \eqref{item:type C 2D continuity Schrodinger}, and
\begin{equation}
f(0) = \Sconstwpd{w_0}{r^{1/2}}{2} - \norm{w_0}_{L^1(\positiveR)} > 0
\end{equation}
holds by \eqref{item:type C 2D Schrodinger decay s>3}. 
Hence, there exists $\theta_0 \in (0, 1)$ such that $f(\theta_0) > 0$.
On the other hand, we have
\begin{align}
\Fwthetad{\theta}{2}(r) = 2 \pi ( (1 - \theta) e^{-r^2 / 2} + \theta (1+r^2)^{-1} ) ,
\end{align}
and so that 
\begin{equation}
\lim_{r \to \infty} r^2 \Fwthetad{\theta}{2}(r) = 2 \pi \theta
\end{equation}
holds for any $\theta \in [0, 1]$.
This shows \eqref{item:type C 2D Schrodinger example >}.
\end{proof}
	Finally, we provide some examples for \eqref{item:type C 2D Schrodinger decay s>3} and \eqref{item:type C 2D Schrodinger decay s=3} of Theorem \ref{thm:type C 2D Dirac}.
	We consider the cases
	\begin{alignat}{8}
		w(r) = & e^{-r^2 / 2} , \quad & & (1+r^2)^{-1} , \quad & & e^{-r} .
		\intertext{In these cases, we have}
		\norm{w}_{L^1(\positiveR)} = & \sqrt{\pi / 2} , \quad & & \pi / 2 , \quad & & 1 , \\
		\Fwd{2}(r) = & 2\pi e^{-r^2/2} , \quad & & 2 \pi \BesselK{0}(r) , \quad & & 2 \pi (1+r^2)^{-3/2} ,
	\end{alignat}
	and 
		\begin{equation}
		\norm{w}_{L^1(\positiveR)} < \Sconstwpd{ w }{ r^{1/2} }{2} \leq 2 \norm{w}_{L^1(\positiveR)} 
	\end{equation}
	holds (note that $\BesselK{0}(r) = O( r^{-1/2} e^{-r} )$ as $r \to \infty$).
	We use Mathematica in order to approximate $\Sconstwpd{ w }{ r^{1/2} }{2} / \norm{w}_{L^1(\positiveR)}$ numerically.
	\begin{example}
		In the case $w(r) = e^{-r^2 / 2}$, \cite[6.633.2]{GR2014} implies
		\begin{align}
			\lambdak{0}(r)
			&=
			2 \pi^2 r \int_0^\infty t e^{- t^2 / 2} ( \BesselJ{0}(r t) )^2 \, dt \\
			&= 2 \pi^2 r e^{- r^2} \BesselI{0}(r^2) .
		\end{align}
		See Figure \ref{fig:graph of lambda0 type C}.\subref{fig:graph lambda0 for exp(-r^2)} for the graph.
		Using Mathematica, we get the following result:
\begin{mmaCell}{InputCode}
NMaximize[{(2 Pi)^(1/2) r Exp[-r^2] BesselI[0, r^2], r > 0}, r]
\end{mmaCell}
\begin{mmaCell}{OutputCode}
\{1.17516,\{r -> 0.888807\}\} 
\end{mmaCell}
This means that 
		\begin{equation}
				\frac{ \Sconstwpd{ w }{ r^{1/2} }{2} }{ \norm{w}_{L^1(\positiveR)} } = \frac{1}{\sqrt{2} \pi^{3/2} } \sup_{r > 0} \lambdak{0}(r)  \approx \frac{\lambdak{0}(0.88880)}{\sqrt{2} \pi^{3/2}} \approx 1.17516 .
			\end{equation}
	\end{example}
	\begin{example}
		In the case $w(r) = (1+r^2)^{-1}$, \eqref{item:product of modified Bessel functions integral formula} implies
		\begin{align}
			\lambdak{0}(r)
			&= 
			2 \pi^2 r \int_0^\infty \frac{t}{1 + t^2} ( \BesselJ{0}(r t) )^2 \, dt \\
			&= 2 \pi^2 r \BesselI{0}(r) \BesselK{0}(r) ,
		\end{align}
		where $\BesselI{0}$ and $\BesselK{0}$ denote the modified Bessel function of the first and second kinds of order zero.
		See Figure \ref{fig:graph of lambda0 type C}.\subref{fig:graph lambda0 for (1+r^2)^(-1)} for the graph.
		Using Mathematica, we get the following result:
\begin{mmaCell}{InputCode}
NMaximize[{2 r BesselI[0, r] BesselK[0, r], r > 0}, r]
\end{mmaCell}
\begin{mmaCell}{OutputCode}
\{1.06673, \{r -> 1.07503\}\}
\end{mmaCell}
This means that 
		\begin{equation}
			\frac{ \Sconstwpd{ w }{ r^{1/2} }{2} }{ \norm{w}_{L^1(\positiveR)} } = \frac{1}{ \pi^2 } \sup_{r > 0} \lambdak{0}(r) \approx \frac{ \lambdak{0}(1.07503) }{\pi^2} \approx 1.06673 .
		\end{equation}
	\end{example}
	\begin{example}
		In the case $w(r) = e^{-r}$, \cite[3.133.4]{GR2014} implies
		\begin{align}
			\lambdak{0}(r) 
			&= 2 \pi r \int_{-1}^{1} (1 + 2r^2 (1-t) )^{-3/2} (1-t^2)^{-1/2} \, dt \\
			&= 4 \pi \frac{ \EllipticE( \sqrt{ 4r^2/(1+4r^2) } )) }{ \sqrt{1/r^2 + 4} } ,
		\end{align}
		where $\EllipticE$ denotes the complete elliptic integral of the second kind:
		\begin{equation}
			\EllipticE(t) \coloneqq \int_0^{\pi/2} \sqrt{ 1 - t^2 \sin^2{\theta} } \, d\theta .
		\end{equation}
		See Figure \ref{fig:graph of lambda0 type C}.\subref{fig:graph lambda0 for e^(-r)} for the graph.
		Using Mathematica, we get the following result:
\begin{mmaCell}{InputCode}
NMaximize[{2 EllipticE[4 r^2/(1 + 4 r^2)]/(1/r^2 + 4)^(1/2), r > 0}, r]
\end{mmaCell}
\begin{mmaCell}{OutputCode}
\{1.05481, \{r -> 1.08983\}\}
\end{mmaCell}
This means that 
		\begin{equation}
			\frac{ \Sconstwpd{ w }{ r^{1/2} }{2} }{ \norm{w}_{L^1(\positiveR)} } = \frac{1}{2\pi } \sup_{r > 0} \lambdak{0}(r)  \approx \frac{ \lambdak{0}(1.08983) }{2\pi} \approx 1.05481  .
		\end{equation}
	\end{example}
	\begin{figure}[h]
		\caption{The graph of $(1+r^2)^{1/2} \BesselI{d/2 - 1}(r) \BesselK{d/2 - 1}(r) $.}
		\label{fig:graph of lambda0 type A}
		\vspace{0.5cm}
		  \begin{minipage}{1\columnwidth}
		  			\centering

			\subcaption{The case $w(r) = e^{-r}$.}
			\label{fig:graph lambda0 for e^(-r)}
		\end{minipage}
	\end{figure}
	\setcitestyle{numbers} 
	
\end{document}